\documentclass[11pt,twoside]{article}
\usepackage[margin=1.3in]{geometry}

\usepackage[T1]{fontenc}      

\usepackage{amsfonts,amsmath,amssymb,amsthm}
\usepackage{cases,color,curves}

\usepackage{cite}

\usepackage{bm,dsfont,extarrows,enumerate,graphicx,ifthen,latexsym,
            listings,makeidx,mathrsfs,psfrag,times,xcolor}

\usepackage[colorlinks,citecolor=green,linkcolor=red]{hyperref}
\usepackage[title]{appendix}

\def\MR#1{}   

\makeindex

\newtheorem{theorem}{Theorem}[section]
\newtheorem{lemma}[theorem]{Lemma}
\newtheorem{proposition}[theorem]{Proposition}

\newtheorem{corollary}[theorem]{Corollary}

\newtheorem{remark}{Remark}[section]
\newtheorem{definition}[theorem]{Definition}
\newtheorem{assumption}[theorem]{Assumption}

\begin{document}

\title{\textbf{Liouville theorems for conformal $Q$-curvature equations}}

\author{
  Meiqing Xu
  \thanks{M. Xu is partially supported by National Key R\&D Program of China 2025YFA1017600 and
NSFC 12526202.}
\and
  Hui Yang\thanks{H. Yang is partially supported by NSFC grant 12301140 and the Shanghai Frontier Science Center of Modern
Analysis.}
}

\date{\today}
\maketitle



\begin{abstract}
In this paper, we study the non-existence of positive solutions for the following conformal $Q$-curvature equation
\begin{equation*}
(-\Delta)^\sigma u = K(x) u^{\frac{n+2\sigma}{n-2\sigma}} \quad \text{in } \mathbb{R}^n,
\end{equation*}
where $ \sigma \in (0, n/2)$ is a real number. When $\sigma=1$, this equation reduces to the well-known scalar curvature equation arising from the prescribed scalar curvature problem. For general $\sigma \in (0, n/2)$, it appears in the study of prescribing $Q$-curvature. We establish Liouville theorems under various assumptions on the $Q$-curvature $K(x)$ by developing a unified approach applicable to all $\sigma \in (0, n/2)$. Our method successfully addresses the challenges posed by the absence of ODE tools in the fractional regime and the lack of a classification of Delaunay-type singular solutions for the general fractional Yamabe equation. Moreover, in the case where $K(x)$ is sign-changing, our result improves upon that of Chen-Li (Comm. Pure Appl. Math. 1995: 657-667) even in the classical setting $\sigma = 1$, by removing the asymptotic assumption on the solution at infinity.

\medskip

\noindent{\it Keywords}: Liouville theorems, conformal $Q$-curvature equations, higher order fractional Laplacian, critical Sobolev exponent

\medskip

\noindent{\it  MSC(2020)}: 35B53; 35G20; 35B33; 53C21
\end{abstract}

\section{Introduction}
Let $(M, g)$ be an $n$-dimensional Riemannian manifold with $n \ge 3$, and let $K$ be a smooth function on $M$. The prescribed scalar curvature problem asks whether there exists a metric $\tilde{g}$ conformal to $g$ whose scalar curvature is $K$. If we denote $\tilde{g} = u^{\frac{4}{n-2}} g$
for some positive function $u$, then this problem reduces to finding positive solutions of the equation
\begin{equation}\label{1.1}
-\Delta_{g} u + c(n) R_g u = c(n) K u^{\frac{n+2}{n-2}}  \qquad\text{on $M$,}
\end{equation}
where $\Delta_{g}$ is the Laplace--Beltrami operator, $R_g$ is the scalar curvature of $(M, g)$ and $c(n)=\frac{n-2}{4(n-1)}$.  In the special case where $M = \mathbb{R}^n$ is equipped with the standard Euclidean metric, equation \eqref{1.1} reduces, after a suitable scaling, to
\begin{equation*}
-\Delta u = K(x) u^{\frac{n+2}{n-2}}  \quad \text{in } \mathbb{R}^n .
\end{equation*}
This equation has been extensively investigated; see, for example, \cite{bahri_scalar-curvature_1991,MR1938821,chang_scalar_1993,MR1131392,chen1997priori,li_prescribing_1995,li_prescribing_1998,MR4531773,MR3950490,MR1379191}.

In \cite{GJMS}, Graham, Jenne, Mason and Sparling constructed a sequence of higher order conformally invariant elliptic operators $\{ P_k^g\}$, which are based on the ambient metric construction of Fefferman and Graham \cite{ambient}. In particular, $P_1^g$ is the conformal Laplacian $L_g:=-\Delta_{g} + c(n) R_g$ and $P_2^g$ is the fourth order Paneitz operator. Up to positive constants, $P_1^g(1)$ is the scalar curvature and $P_2^g(1)$ is the fourth order $Q$-curvature of $g$.  Later, Graham and Zworski \cite{graham_scattering_2003} introduced a family of fractional order conformally invariant operators on the conformal infinity of asymptotically hyperbolic manifolds via scattering theory. Recently, Chang-Gonz\'alez \cite{chang_fractional_2011} and Case-Chang \cite{case_fractional_2016} provided a new interpretation of those fractional operators and established some interesting properties for associated fractional $Q$-curvatures.

Prescribing $Q$-curvature is a natural generalization of the classical prescribed scalar curvature problem. Both the prescribed higher order $Q$-curvature and fractional $Q$-curvature problems have attracted significant attention; see \cite{MR1338677,MR3484963,MR2456884,MR1991145,MR3420504,MR3518237,JLX,jin-nirenberg-2017,MR4525739,qing_positive_2006,MR2548028} and the references therein. On the standard Euclidean space $\mathbb{R}^n$, let $g = u^{\frac{4}{\,n-2\sigma\,}} \delta_{ij}$ be a conformal metric for some positive function $u$. Then the associated fractional conformal Laplacian $P_\sigma^g$ satisfies $P_\sigma^g(\cdot) = u^{-\frac{n+2\sigma}{n-2\sigma}} (-\Delta)^\sigma (u \cdot)$, and the fractional $Q$-curvature of $g$ is given by
\begin{equation*}
Q^{g}_{\sigma} := P_{\sigma}^g(1) = u^{-\frac{n+2\sigma}{\,n-2\sigma\,}} (-\Delta)^{\sigma} u.
\end{equation*}
Here $\sigma \in (0, \frac{n}{2})$ and $(-\Delta)^{\sigma}$ is the higher order fractional Laplacian defined as
\begin{equation*}
    (-\Delta)^\sigma=(-\Delta)^{\{\sigma\}}\circ(-\Delta)^{[\sigma]},
\end{equation*}
where $\{\sigma\}\in[0,1)$ and $[\sigma]\in\mathbb{N}$ denote the fractional part and integer part of $\sigma$, respectively. When $\sigma \in (0, 1)$, the fractional Laplacian $(-\Delta)^\sigma$ is a nonlocal operator defined by
\begin{equation*}
(-\Delta)^\sigma u(x)=C_{n,\sigma} \text{P.V.} \int_{\mathbb{R}^n} \frac{u(x)-u(y)}{|x-y|^{n+2\sigma}}dy,
\end{equation*}
where P.V. stands for the Cauchy principal value. Hence, the problem of prescribing $Q$-curvature in $\mathbb{R}^n$ is equivalent to finding positive solutions of the higher order fractional equation
\begin{equation}\label{sobolev eq 7}
(-\Delta)^\sigma u = K(x) u^{\frac{n+2\sigma}{n-2\sigma}}  \quad \text{in } \mathbb{R}^n,
\end{equation}
where $n\geq 2$ and $\sigma \in (0, n/2)$ is a real number. It is already known that the existence of positive solutions to \eqref{sobolev eq 7} depends on $K(x)$ in a rather complex manner.  In particular, the aforementioned literature has established the existence for \eqref{sobolev eq 7} under various assumptions on $K(x)$.

This paper is devoted to studying under what conditions on $K(x)$ equation \eqref{sobolev eq 7} admits no positive solution.  For the convenience of applications, we will understand the solutions to \eqref{sobolev eq 7} in the sense of distributions. For $s\in \mathbb{R}$, define
\begin{equation*}
\mathcal{L}_s (\mathbb{R}^n) =\left\{ u\in L_\text{loc}^1(\mathbb{R}^n)  \mid \int_{\mathbb{R}^n}\frac{|u(x)|}{(1+|x|)^{n+2s}}dx<\infty \right\}.
\end{equation*}
We say that $u$ is a distributional solution to \eqref{sobolev eq 7} if $u\in\mathcal{L}_\sigma (\mathbb{R}^n),\; K u^{\frac{n+2\sigma}{n-2\sigma}} \in L_\text{loc}^1(\mathbb{R}^n)$ and
\begin{equation*}
\int_{\mathbb{R}^n}u(-\Delta)^\sigma \varphi  =\int_{\mathbb{R}^n} K  u^{\frac{n+2\sigma}{n-2\sigma}}  \varphi,  \quad\text{for any }\varphi\in C_c^\infty(\mathbb{R}^n).
\end{equation*}
First of all, by the Kazdan-Warner type identity (or the Pohozaev type identity), one has
\begin{equation}\label{KW=con01}
\int_{\mathbb{R}^n} (x \cdot \nabla K(x)) u^{\frac{2n}{n-2\sigma}}(x) dx = 0,
\end{equation}
provided that $u(x)$ satisfies the fast decay rate
\begin{equation}\label{KW=con02}
u(x) \leq C |x|^{2\sigma-n}  \quad \text{near } \infty.
\end{equation}
Hence, when $K(x)$ is increasing (or decreasing) along each ray $\{t \theta | t \geq 0\}$ for any unit vector $\theta \in \mathbb{S}^{n-1}$, equation \eqref{sobolev eq 7} possesses no positive smooth solutions satisfying \eqref{KW=con02}. A natural question arises: if \eqref{KW=con02} is removed, does equation \eqref{sobolev eq 7} possess positive smooth solutions under the assumption of monotonicity on $K(x)$?  We first study this problem under the assumption that $K(x)$ is radially symmetric. Our first result is as follows.

\begin{theorem}\label{thm radial}
Let $0< \sigma <n/2$ and $u \in \mathcal{L}_\sigma (\mathbb{R}^n) \cap C(\mathbb{R}^n)$ be a nonnegative distributional solution to \eqref{sobolev eq 7}. Suppose that $K(x)=K(|x|) \in C^1[0, \infty)$ is nonnegative and radially nondecreasing. Then $u\equiv 0$ in $\mathbb{R}^n$ unless $K$ is a constant.
\end{theorem}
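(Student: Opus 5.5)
We argue by contradiction. Assume $u\not\equiv 0$ and $K$ is not constant. We aim to show that $u$ is radially symmetric about the origin and has the fast decay \eqref{KW=con02}; the Kazdan--Warner identity \eqref{KW=con01} then gives a contradiction.

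\textbf{Step 1: integral form and positivity.} Since $u\in\mathcal{L}_\sigma(\mathbb{R}^n)\cap C(\mathbb{R}^n)$ is a nonnegative distributional solution and $K\ge 0$, the standard Liouville/representation argument for nonnegative $\sigma$-superharmonic functions gives
\[
u(x)=c_{n,\sigma}\int_{\mathbb{R}^n}\frac{K(y)u(y)^{\frac{n+2\sigma}{n-2\sigma}}}{|x-y|^{n-2\sigma}}\,dy+c_0 ,\qquad c_0\ge 0 .
\]
If $c_0>0$, then $u\ge c_0$. Since $K$ is nondecreasing and not identically zero (otherwise $K\equiv 0$ is constant), $K\ge K(R_0)>0$ for $|x|\ge R_0$. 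The integral term would then diverge, so $c_0=0$. If $u\not\equiv0$, the integral form gives $u>0$ everywhere.

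Finiteness of the integral at $x=0$ already forces $\int K u^{p}(1+|y|)^{2\sigma-n}<\infty$, where $p=\frac{n+2\sigma}{n-2\sigma}$. Since $K\ge K(R_0)>0$ outside a ball, this gives $\int_{\mathbb{R}^n} u^{p}(1+|y|)^{2\sigma-n}dy<\infty$.

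\textbf{Step 2: moving spheres.} For each center $x_0$ and radius $\lambda>0$, let
\[
u_{x_0,\lambda}(x)=\left(\frac{\lambda}{|x-x_0|}\right)^{n-2\sigma}u\!\left(x_0+\frac{\lambda^2(x-x_0)}{|x-x_0|^2}\right).
\]
It satisfies the integral equation with $K$ replaced by $K$ evaluated at the reflected point. The difference $w_\lambda=u_{x_0,\lambda}-u$ on $B_\lambda(x_0)^c$ obeys an integral inequality whose kernel is positive. The extra term $\big(K(x^{x_0,\lambda})-K(x)\big)u_{x_0,\lambda}^p$ has a favorable sign whenever $|x^{x_0,\lambda}|\le|x|$, because $K$ is radially nondecreasing.

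I would take $x_0=0$ and use the Li--Zhu / Chen--Li--Ou integral moving sphere method. Starting with small $\lambda$, where $u_{0,\lambda}\le u$ outside $B_\lambda$ by continuity and positivity, I push $\lambda$ up to $\bar\lambda=\sup\{\lambda: u_{0,\mu}\le u \text{ on } B_\mu^c,\ \forall \mu<\lambda\}$.

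The key dichotomy is as follows.
\begin{itemize}
\item If $\bar\lambda=\infty$, then $u(x)\ge(\lambda/|x|)^{n-2\sigma}u(\lambda^2x/|x|^2)$ for all $\lambda$. This gives a lower bound $u(x)\ge c|x|^{2\sigma-n}$, and in fact lets $|x|^{n-2\sigma}u$ increase along rays. I would then combine this with the integrability from Step 1 and the monotonicity of $K$ to derive a contradiction. Possibly I would also do moving spheres at other centers to force $u$ to be constant along rays in a way incompatible with $\int u^p|y|^{2\sigma-n}<\infty$.
\item If $\bar\lambda<\infty$, the usual argument shows $u_{0,\bar\lambda}\equiv u$. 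Plugging this into the equation forces $K(\bar\lambda^2x/|x|^2)=K(x)$ on the support, i.e.\ $K(r)=K(\bar\lambda^2/r)$. Monotonicity then makes $K$ constant, a contradiction.
\end{itemize}

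The symmetry $u_{0,\bar\lambda}=u$ also directly gives the decay $u(x)=O(|x|^{2\sigma-n})$, so alternatively one can finish via \eqref{KW=con01}. Radial symmetry is unnecessary for that route: since $x\cdot\nabla K=rK'(r)\ge0$ with strict positivity on a set of positive measure, \eqref{KW=con01} gives the contradiction.

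\textbf{Main obstacle.} The delicate step is the case $\bar\lambda=\infty$ and, more fundamentally, getting the sphere-moving started and keeping it going. This is hard because there is no a priori decay of $u$ at infinity: this is exactly the assumption the theorem avoids, and the difficulty shows up in the lack of Delaunay classification. To start, I would use the reflected integral inequality restricted to the region where the $K$-term has a good sign, and estimate $w_\lambda^-$ via Hardy--Littlewood--Sobolev on $B_\lambda^c$ using only the weighted integrability of $u^p$ from Step 1. For $\bar\lambda=\infty$, I would try the Li calculus lemma: moving spheres with every center $x_0$ reaching infinity implies $u$ is constant. A constant $u$ contradicts the integral equation with $K\not\equiv0$. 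So I would run the procedure at all centers $x_0$, using that the favorable sign of the $K$-term persists near infinity.
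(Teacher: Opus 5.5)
Your proposal has a genuine gap, and it sits exactly in the case you flag as delicate.

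\textbf{The branch you cannot close is the only one that occurs.} The branch $\bar\lambda<\infty$ is empty: as you show yourself, it forces $K(r)=K(\bar\lambda^2/r)$, hence $K$ constant. So for nonconstant $K$ the spheres centered at $0$ always reach $\bar\lambda=\infty$; this is the paper's Lemma~\ref{lemma 3.1}. Consequently neither your symmetry conclusion nor the Kazdan--Warner finish via \eqref{KW=con01} is ever available, because the fast decay \eqref{KW=con02} was obtained only in the empty branch.

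What $\bar\lambda=\infty$ actually gives is that $|x|^{\frac{n-2\sigma}{2}}u(x)$ is nondecreasing along rays (the exponent is $\frac{n-2\sigma}{2}$, not $n-2\sigma$). That is, $u\ge c|x|^{-\frac{n-2\sigma}{2}}$ at infinity. This is fully compatible with Step~1: $u^{p}|y|^{2\sigma-n}\gtrsim |y|^{\sigma-\frac{3n}{2}}$ is integrable at infinity because $\sigma<n/2$. The slow, Delaunay-type rate $|x|^{-\frac{n-2\sigma}{2}}$ is precisely what must be excluded, and nothing in your plan does so.

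\textbf{The calculus-lemma fallback also fails.} Li's lemma needs $u_{x_0,\lambda}\le u$ on $B_\lambda(x_0)^c$ for every center and \emph{every} $\lambda>0$. Take $x_0\neq 0$ and $\lambda<|x_0|$. Then the origin lies outside $B_\lambda(x_0)$, while its reflection $x_0(1-\lambda^2/|x_0|^2)$ has positive norm. So for $y$ near $0$ you get $|y^{x_0,\lambda}|>|y|$, and the term $K(y)-K(y^{x_0,\lambda})$ has the wrong sign in general. The spheres about $x_0$ cannot be started; the favorable sign holds only once $\lambda>|x_0|$.

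\textbf{How the paper excludes the slow profile.} It needs substantially more input, in two different ways.
\begin{itemize}
\item For $\sigma\in(0,1)$ it invokes Theorem~\ref{thm Lin}. After the same moving-sphere step, the case $K(\infty)=+\infty$ is killed by iterating the lower bound. Otherwise a blow-up argument shows $|x|^{\frac{n-2\sigma}{2}}u$ is bounded, and a limiting one-dimensional integral equation shows it converges to some $M_0>0$. An explicit Pohozaev computation for the profile $M_0|x|^{-\frac{n-2\sigma}{2}}$ then gives a boundary term of the wrong sign.
\item For $\sigma\in[1,n/2)$ it proves symmetry rather than decay. For $v(x)=|x|^{2\sigma-n}u(x/|x|^2)$ the coefficient $K(y/|y|^2)$ is radially nonincreasing, so moving \emph{planes} on $v$ toward the origin have the right sign and give radial symmetry. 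Next, $-\Delta u\ge 0$ (this is where $\sigma\ge 1$ is used) yields $u'(r_0)<0$ for some $r_0>0$. The paper takes a second Kelvin transform about a point $T$ with $|T|=1/r_0$. It shows this transform is symmetric about a hyperplane whose normal makes an irrational angle with the axis. Bianchi's lemma then forces $v$ to be a constant or a bubble, hence $K$ is constant.
\end{itemize}

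\textbf{A smaller point on Step~1.} The representation ``Riesz potential $+\,c_0$'' is not automatic for $\sigma>1$: for example, $|x|^2\in\mathcal{L}_2(\mathbb{R}^n)$ is a nonnegative biharmonic function. The paper obtains the representation with no constant from the integrability lifting in Proposition~\ref{prop u lift}, which uses $K\ge c>0$ outside a ball.
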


When $\sigma=1$, Theorem \ref{thm radial} was first proved by Ding-Ni \cite{ding_elliptic_1985} for radially symmetric solutions, and later by Bianchi \cite{gabriele_nonexistence_1997} for positive solutions. The proof by Ding-Ni is based on ODE methods, while Bianchi's proof uses the method of moving planes along with some ODE-type arguments. However, these ODE-type arguments cannot be directly applied to the fractional order equation \eqref{sobolev eq 7}. We will tackle this difficulty by establishing an integral representation equivalent to \eqref{sobolev eq 7} and combining it with the method of moving spheres as well as the method of moving planes. On the other hand, for radially nonincreasing functions $K(|x|)\geq 0$, Theorem \ref{thm radial} generally fails to hold. In particular, Ding-Ni \cite{ding_elliptic_1985} proved for $\sigma=1$ the existence of a radially nonincreasing $K(|x|)\geq 0$ such that \eqref{sobolev eq 7} has infinitely many positive solutions with the asymptotic behavior $C |x|^{-(n-2)/2}$ at infinity.

Now, turning to nonradial functions $K(x)$, we present the following Liouville-type theorem.

\begin{theorem}\label{thm Lin}
Let $K \in C^1(\mathbb{R}^n)$ be nonnegative and nondecreasing along each ray $\{t \theta | t \geq 0\}$ for any unit vector $\theta \in \mathbb{S}^{n-1}$ with
\[
 \lim_{|x| \to +\infty} K(x)=K(\infty) \in(0, +\infty].
\]
Let $0< \sigma< n/2$ and $u \in  \mathcal{L}_\sigma (\mathbb{R}^n) \cap C(\mathbb{R}^n)$ be a nonnegative distributional solution to \eqref{sobolev eq 7}. Suppose that either $\sigma$ is a positive integer or $\sigma \in (0, 1)$. Then $u\equiv 0$ in $\mathbb{R}^n$ unless $K$ is a constant.
\end{theorem}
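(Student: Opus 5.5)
We prove Theorem \ref{thm Lin} by the method of moving spheres applied to an integral form of \eqref{sobolev eq 7}; the monotonicity of $K$ along rays is exactly what makes the Kelvin transform comparison work.

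\textbf{Step 1: integral representation.} Suppose $u\not\equiv 0$. Since $u\ge 0$ is continuous in $\mathcal{L}_\sigma(\mathbb{R}^n)$ and $K\ge 0$, we first show that $u>0$ and
\begin{equation*}
u(x)=c_{n,\sigma}\int_{\mathbb{R}^n}\frac{K(y)u(y)^{\frac{n+2\sigma}{n-2\sigma}}}{|x-y|^{n-2\sigma}}\,dy \qquad\text{in }\mathbb{R}^n .
\end{equation*}
For $\sigma\in(0,1)$ this follows from a maximum-principle/Liouville argument for $\sigma$-harmonic functions in $\mathcal{L}_\sigma$: the difference between $u$ and the Riesz potential is a nonnegative $\sigma$-harmonic function with the right growth, hence constant, and the constant is zero. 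For integer $\sigma$ we instead use the super-polyharmonic property: $(-\Delta)^i u\ge 0$ for all $i$, proved by the standard spherical-average argument, and then iterate. This is the step where the restriction to integer $\sigma$ or $\sigma\in(0,1)$ enters, because for general fractional $\sigma>1$ the superharmonicity of the intermediate quantities is not available.

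\textbf{Step 2: moving spheres.} For $x\in\mathbb{R}^n$ and $\lambda>0$, define the Kelvin transform
\begin{equation*}
u_{x,\lambda}(y)=\Big(\frac{\lambda}{|y-x|}\Big)^{n-2\sigma}u\Big(x+\frac{\lambda^2(y-x)}{|y-x|^2}\Big).
\end{equation*}
It satisfies the same integral equation, but with $K$ replaced by $K(y^{x,\lambda})$, where $y^{x,\lambda}$ is the reflected point. The kernel identity then gives, for $|y-x|>\lambda$,
\begin{equation*}
u(y)-u_{x,\lambda}(y)=\int_{|z-x|>\lambda}P(x,\lambda;y,z)\Big[K(z)u(z)^{p}-K(z^{x,\lambda})u_{x,\lambda}(z)^p\Big]dz ,
\end{equation*}
with $p=\frac{n+2\sigma}{n-2\sigma}$ and positive kernel $P$.

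The key monotonicity is this: for $|z-x|>\lambda$, the point $z^{x,\lambda}$ lies on the segment between $x$ and $z$. This is not the same as lying on a ray from the origin, so we restrict to centers $x=0$, or rather we exploit the structure at infinity.

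Here is my concrete plan. Since $K(\infty)>0$, $u$ is positive and behaves like an entire solution near infinity. First we show that $u(y)\ge u_{x,\lambda}(y)$ for all small $\lambda$ (the usual start, using $u>0$ and continuity). Then we define
\begin{equation*}
\bar\lambda(x)=\sup\{\mu:\ u\ge u_{x,\lambda}\text{ for } |y-x|>\lambda,\ \text{for all } \lambda<\mu\}.
\end{equation*}
We then split into two cases.

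\textbf{Case A: $\bar\lambda(x)<\infty$ for some $x$.} At $\bar\lambda$ we obtain $u\equiv u_{x,\bar\lambda}$; otherwise the integral identity and a narrow-region estimate allow us to push $\lambda$ further. The identity then forces $K(z)=K(z^{x,\bar\lambda})$ on the support. Taking $x=0$, the ray monotonicity gives $K(z)\ge K(z^{0,\lambda})$ for $|z|>\lambda$, which is what drives the comparison. Equality then forces $K$ to be constant on each ray from $|z|=\bar\lambda^2/|z|$ to $|z|$. Running the argument for all centers, or letting $x$ vary near $0$, yields that $K$ is constant.

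\textbf{Case B: $\bar\lambda(x)=\infty$ for all $x$.} The standard calculus lemma of Li--Zhu gives that $u$ is constant, which contradicts the integral equation because $\int |y|^{2\sigma-n}K\,dy=\infty$ when $K(\infty)>0$.

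\textbf{Main obstacle.} The hard step is making the comparison work for centers $x\neq 0$, because the sign of $K(z)-K(z^{x,\lambda})$ is controlled by the ray monotonicity only when $x=0$. My first attempt is to run moving spheres only about the origin and combine it with moving planes in the far region, in the spirit of Bianchi's argument. The far-field behaviour is $K\to K(\infty)$, and $u$ is asymptotically radial there. This should show that $\bar\lambda(0)<\infty$ and that $u=u_{0,\bar\lambda}$. The equality case then yields $K(z)=K(\bar\lambda^2 z/|z|^2)$, which is compatible with ray monotonicity only if $K$ is constant along each ray. The limit $K(\infty)$ then makes $K$ constant. If $\bar\lambda(0)=\infty$, then $u\ge u_{0,\lambda}$ for all $\lambda$. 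This gives $u(y)\ge c|y|^{2\sigma-n}\cdot\lambda^{n-2\sigma}$ growth, and hence $u$ bounded below by a positive constant at infinity. Plugging this into the integral equation with $K(\infty)>0$ makes the integral diverge, which is the desired contradiction.
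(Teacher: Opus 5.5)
Your moving-sphere step about the origin matches the paper's Lemma~\ref{lemma 3.1}. The paper writes it as $u\le u_\lambda$ in $B_\lambda\setminus\{0\}$, which is equivalent to your exterior comparison. Your Case~A is also correct: if the sphere stops at $\bar\lambda(0)<\infty$, then $K(z)\equiv K(\bar\lambda^2 z/|z|^2)$ and $K$ is constant. But this shows that for nonconstant $K$ one \emph{always} has $\bar\lambda(0)=\infty$, so the whole theorem lies in excluding that case, and your argument there fails.

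Setting $s=\lambda^2/|y|\in(0,|y|)$, the relation $u\ge u_{0,\lambda}$ on $\{|y|>\lambda\}$ for every $\lambda$ says exactly that $|y|^{\frac{n-2\sigma}{2}}u(y)$ is nondecreasing along rays. Since $\lambda<|y|$ is forced, you cannot send $\lambda\to\infty$ at fixed $y$. The best lower bound is $u(y)\ge c|y|^{-\frac{n-2\sigma}{2}}$, not a positive constant. With that bound, the integral equation only gives $\int_{|y|>R}Ku^{\frac{n+2\sigma}{n-2\sigma}}|y|^{2\sigma-n}\,dy\gtrsim\int_{|y|>R}|y|^{\sigma-\frac{3n}{2}}\,dy$, which is finite because $\sigma<n/2$. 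Indeed $M_0|x|^{-\frac{n-2\sigma}{2}}$ solves the constant-$K$ equation away from $0$, so no growth argument of this kind can produce a contradiction.

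Even when $K(\infty)=+\infty$, a slowly growing $K$ keeps that integral finite. The paper (Lemma~\ref{lemma infty}) instead iterates the lower bound through the integral equation, gaining a factor $\min_{\partial B_{|x|}}K$ at each step, to force $u(0)=\infty$. Your fallback of proving $\bar\lambda(0)<\infty$ by Bianchi-type moving planes assumes $u$ is asymptotically radial at infinity, which you do not prove and which is essentially the hard part. Case~B is unavailable, as you note yourself, since $K(z)-K(z^{x,\lambda})$ has no sign for $x\neq 0$.

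For $K(\infty)<\infty$, you are missing the paper's chain of steps that follows the monotonicity of $W(x)=|x|^{\frac{n-2\sigma}{2}}u(x)$:
\begin{enumerate}
\item[(i)] An upper bound $W\le C$ (Lemma~\ref{lemma 3.2}). This comes from blow-up of the Kelvin transform at the origin: the limiting bubble has an interior maximum, which is incompatible with the ray monotonicity of $W$.
\item[(ii)] Existence of $\lim_{|x|\to\infty}W(x)=M_0$ (Lemma~\ref{lemma 3.3}). This uses the limiting problem on $\mathbb{R}^n\setminus\{0\}$, its radial symmetry, and the Emden--Fowler variable.
\item[(iii)] A Pohozaev identity (Lemmas~\ref{lemma 3.4} and~\ref{lemma 3.5}). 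Its left side $\int_{B_r}(x\cdot\nabla K)u^{\frac{2n}{n-2\sigma}}$ is positive. Its boundary terms converge to those of $M_0|x|^{-\frac{n-2\sigma}{2}}$, which equal a positive constant times $\frac{n-2\sigma}{n}-1<0$.
\end{enumerate}
The explicit evaluation of these boundary terms is the only place where $\sigma\in\mathbb{N}$ or $\sigma\in(0,1)$ is needed. The integral representation, which you blamed for the restriction, holds for every $\sigma\in(0,n/2)$ (Theorem~\ref{thm equivalence}). If your moving-sphere argument had closed, it would have proved the result for all $\sigma$, which already signals that an essential ingredient was missing.
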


When $\sigma=1$ and $K(\infty) \in (0, +\infty)$, Theorem \ref{thm Lin} was proved by Lin \cite{lin_liouvilletype_1998}. Lin's proof proceeded by first showing that $W(x) := |x|^{(n-2)/2} u(x)$ increases along each ray $\{t \xi \mid t \geq 0\}$ for any unit vector $|\xi| = 1$. He then used the classification (see Caffarelli-Gidas-Spruck \cite{CGS}) of singular solutions to the Yamabe equation
\[
-\Delta U= U^{\frac{n+2}{n-2}} \quad \text{in } \mathbb{R}^n \setminus \{0\}
\]
to determine the limit of $W(x)$ as $|x| \to \infty$, and finally combined this limit with the Pohozaev identity to complete the proof.
However, such a classification for the fractional Yamabe equation remains an open problem. Recently, Andrade-DelaTorre-do \'{O}-Ratzkin-Wei \cite{andrade2025classificationfractionalsingularyamabe} achieved a classification when $\sigma$ is very close to $1$. For higher order cases, classification results are only known for $\sigma=2$ and $3$ (see \cite{andrade2022classificationpositivesingularsolutions,MR3869387}). To circumvent this difficulty, we develop a method that does not rely on the classification of singular solutions to prove Theorem \ref{thm Lin}.

Another crucial  tool for establishing Theorem \ref{thm Lin} is the Pohozaev identity. The computations associated with this identity become significantly more challenging for fractional and higher order cases. In the fractional case $0< \sigma<1$, we derive a precise representation of the Pohozaev integral in terms of hypergeometric functions when $u$ decays like $|x|^{-(n-2\sigma)/2}$ at infinity. In the higher order case, we obtain an explicit iterative formula for the Pohozaev integral under the asymptotic condition $u(x) \sim |x|^{-(n-2\sigma)/2}$, based on the Pohozaev identity established by Guo-Peng-Yan \cite{guo2015existencelocaluniquenessbubbling}.

\begin{remark}
While an iterative Pohozaev-type formula is available for the higher order fractional Laplacian (see Jin-Xiong \cite{jin_asymptotic_2021}), the explicit computation of the Pohozaev integral for the profile $u(x)=c |x|^{-(n-2\sigma)/2}$, with non-integer $\sigma\in (1, n/2)$, presents substantial technical challenges. Once these computations are completed, our approach directly yields the corresponding Liouville-type theorem.
\end{remark}

Next, we establish a Liouville-type theorem for the case where $K$ is nonpositive. Such Liouville-type theorems play an important role in deriving a priori estimates of the prescribing sign-changing $Q$-curvature problem.
\begin{theorem}\label{thm negative}
Let $0< \sigma <n/2$ and $u \in \mathcal{L}_\sigma (\mathbb{R}^n) \cap C(\mathbb{R}^n)$ be a nonnegative distributional solution to \eqref{sobolev eq 7}. Suppose $K(x)\leq 0$ for any $x \in \mathbb{R}^n$, and there exist $R_0>0$, $C>0$, $\alpha >-2\sigma$ such that
\begin{equation}\label{Kx=26}
    |K(x)|\ge C|x|^{\alpha} \quad\text{for $|x|\ge R_0$}.
\end{equation}
 Then $u \equiv 0$ in $\mathbb{R}^n$.
\end{theorem}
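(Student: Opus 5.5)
Write $p=\frac{n+2\sigma}{n-2\sigma}$. With $K\le 0$ the right-hand side of \eqref{sobolev eq 7} is nonpositive, so $u\ge 0$ solves $(-\Delta)^\sigma u\le 0$ in the distributional sense. The idea is that a nonnegative $\sigma$-subharmonic function in $\mathcal{L}_\sigma$ has to be bounded below by a positive constant at infinity. This lower bound is then incompatible with the quantitative decay of the right-hand side forced by \eqref{Kx=26}.

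\textbf{Step 1 (mean value / lower bound).} I would first show that $u\in\mathcal{L}_\sigma$ and $(-\Delta)^\sigma u=-|K|u^p=:-f\le 0$ give a representation
\[
u(x)=c_0-\int_{\mathbb{R}^n}\frac{c_{n,\sigma}f(y)}{|x-y|^{n-2\sigma}}\,dy,\qquad c_0\ge 0 \text{ a constant},
\]
provided $f$ satisfies $\int f(y)(1+|y|)^{2\sigma-n}dy<\infty$. This integral representation is the same tool the introduction announces for Theorem \ref{thm radial}. One way to get it is the Liouville theorem for $\sigma$-harmonic functions in $\mathcal{L}_\sigma$: such functions are affine, and for $2\sigma\le 1$ they are constant. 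The integrability of $f$ itself follows by testing the equation against rescaled cutoffs $\varphi_R(x)=\varphi(x/R)$, which gives
\[
\int f\varphi_R=-\int u(-\Delta)^\sigma\varphi_R=O\!\Big(R^{-2\sigma}\!\!\int_{|x|\lesssim R}u+\text{tail}\Big).
\]
Membership $u\in\mathcal{L}_\sigma$ controls the tail by $o(R^{n})\cdot R^{-2\sigma}$ in an averaged sense. If affine terms appear (larger $\sigma$), nonnegativity of $u$ rules out a nonconstant linear part. Since $u\ge0$ and the potential term is $\ge0$, we also get $c_0\ge u$.

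\textbf{Step 2 (test-function contradiction).} Assume $u\not\equiv 0$. I would use the cutoff estimate directly, choosing $\varphi$ with $0\le\varphi\le1$, $\varphi=1$ on $B_1$, supported in $B_2$. By Hölder,
\[
\int f\varphi_R\le C R^{-2\sigma}\int_{B_{2R}}u+\text{tail}.
\]
Combining this with $f\ge C|x|^{\alpha}u^p$ for $|x|\ge R_0$, I would set up the standard Mitidieri–Pohozaev estimate
\[
\int_{B_{2R}} u\,|(-\Delta)^\sigma\varphi_R|\le \Big(\int |x|^\alpha u^p\varphi_R\Big)^{1/p}\Big(\int |x|^{-\alpha p'/p}|(-\Delta)^\sigma\varphi_R|^{p'}\varphi_R^{-p'/p}\Big)^{1/p'}.
\]
For this I would use $\varphi_R^m$ with $m$ large so that $|(-\Delta)^\sigma\varphi^m|\lesssim\varphi^{m-?}$ holds in the nonlocal sense (with the usual care for $\{\sigma\}>0$ tails, handled via $(1+|x|)^{-n-2\sigma}$ decay). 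This yields
\[
I_R:=\int |x|^\alpha u^p\varphi_R\lesssim R^{(n-2\sigma p'-\alpha p'/p)/p'}=R^{\,n(1-1/p)-2\sigma-\alpha/p}.
\]
Since $n(1-\frac1p)=\frac{4\sigma n}{n+2\sigma}$, the exponent is
\[
\frac{4\sigma n-2\sigma(n+2\sigma)-\alpha(n-2\sigma)}{n+2\sigma}=\frac{(n-2\sigma)(2\sigma-\alpha)}{n+2\sigma}\cdot\!\!\;\text{(up to sign conventions)}.
\]
This is not obviously negative, so the pure test-function method is insufficient at the critical exponent. That is why Step 1 is needed.

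\textbf{Step 3 (closing the argument).} With the representation $u=c_0-I_{2\sigma}*f\ge0$, I would split into two cases.

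If $c_0=0$, then $u\le 0$, so $u\equiv0$.

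If $c_0>0$, I would show that $u\ge c_0/2$ on a set of large density at infinity. Indeed $I_{2\sigma}*f(x)\to0$ in average over spheres, because $\int f(1+|y|)^{2\sigma-n}<\infty$. Then
\[
\int f\ge C\int_{|x|\ge R_0}|x|^\alpha (c_0/2)^p=\infty
\]
on that set, since $\alpha>-2\sigma$. To reach a contradiction this has to be compared with the weighted finiteness $\int f|y|^{2\sigma-n}<\infty$. On that set the weighted integral is $\gtrsim\int^\infty r^{\alpha+2\sigma-n}r^{n-1}dr=\infty$ exactly when $\alpha>-2\sigma$, which is precisely the hypothesis in \eqref{Kx=26}.

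\textbf{Main obstacle.} The hardest step is Step 1: rigorously deriving the integral representation with the constant $c_0$ and the finiteness of $\int f(1+|y|)^{2\sigma-n}$ for merely distributional, continuous solutions when $\sigma$ is a non-integer greater than $1$. I would attempt it by iterating, writing $(-\Delta)^\sigma=(-\Delta)^{\{\sigma\}}(-\Delta)^{[\sigma]}$ and establishing superharmonicity of the intermediate functions $(-\Delta)^{k}u$ via an averaging argument on spheres.
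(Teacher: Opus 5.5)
There is a genuine gap in Step~1, and it sits exactly where hypothesis \eqref{Kx=26} has to be used. Testing with compactly supported cutoffs $\varphi_R$ and using only $u\in\mathcal{L}_\sigma$ gives $\int_{B_R} f\lesssim R^{n}\int u(x)(R+|x|)^{-n-2\sigma}\,dx=o(R^{n})$, i.e.\ $\int f(1+|x|)^{-n-\varepsilon}<\infty$ (this is \eqref{eq 1}). That is far from $\int f(1+|y|)^{2\sigma-n}\,dy<\infty$, which you need even for $I_{2\sigma}*f$ to be finite.

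Nor can a cutoff argument that ignores \eqref{Kx=26} reach it. Take $\sigma=1$ and $u=(1+|x|^2)^{a/2}$ with $0<a<2$. Then $u\in\mathcal{L}_1$ and $u$ solves the equation with $K=-\Delta u/u^{\tau}<0$. But $f=\Delta u\sim|x|^{a-2}$, so $\int f(1+|y|)^{2-n}=\infty$ and no representation $u=c_0-I_2*f$ exists. Consistently, $|K|\sim|x|^{a-2-a\tau}$ violates \eqref{Kx=26}.

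So any proof of Step~1 must invoke the lower bound on $|K|$; using it only in Step~3 comes too late. The route in your last paragraph, via superharmonicity of $(-\Delta)^k u$, does not address this either: the obstruction is integrability at infinity, not a maximum principle.

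The missing idea is the integrability bootstrap of Proposition \ref{prop u lift}. It is really your Step~2 machinery used differently:
\begin{itemize}
\item test the equation against power-law profiles $\psi\sim|x|^{-q}$ (Lemma \ref{lemma remo}) rather than cutoffs, to get a weighted $L^1$ bound on $f$;
\item convert that bound, via H\"older and $f\ge C|x|^{\alpha}u^{p}$, into a better weighted bound on $u$;
\item iterate.
\end{itemize}
Because $\alpha+2\sigma>0$, the iteration reaches both $\int f(1+|x|)^{2\sigma-n}<\infty$ and $u\in\mathcal{L}_s$ for some $s<0$.

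After that the argument closes at once. Put $v=I_{2\sigma}*f\in\mathcal{L}_0$; then $u+v$ is $\sigma$-harmonic and lies in $\mathcal{L}_0$. Hence $u+v\equiv0$, since nonzero constants are not in $\mathcal{L}_0$, and so $u=-v\le 0$. No constant $c_0$ appears, and your Step~3 becomes unnecessary, though it would be valid given the integrability.

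There is also a smaller error. For $\sigma>1$, $\sigma$-harmonic functions in $\mathcal{L}_\sigma$ are polynomials of degree $<2\sigma$, not just affine functions. Nonnegativity does not reduce them to constants: for example $|x|^2$ when $\sigma=2$, $n\ge5$. Your Step~3 density argument could be adapted to exclude such polynomials, but as written the reduction to a constant $c_0$ is unjustified.
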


For $\sigma=1$, Theorem \ref{thm negative} was proved by Ni \cite{ni1982elliptic} using the spherical averaging technique and an ODE-type method. However, this approach is clearly difficult to extend to the general fractional operator $(-\Delta)^\sigma$. Instead, we present a very straightforward proof for all $\sigma\in (0, n/2)$ via the method of integral equations.

Finally, we establish a Liouville-type theorem for equation \eqref{sobolev eq 7} when $K$ changes sign.

\begin{theorem}\label{thm:no-asymptotic}
Let $0< \sigma <n/2$ and $u \in \mathcal{L}_\sigma (\mathbb{R}^n) \cap C(\mathbb{R}^n)$ be a nonnegative distributional solution to \eqref{sobolev eq 7}.  Suppose that
$K(x)=K(|x|)\in C(\mathbb R^n)$ satisfies
\[
\begin{cases}
     K(r)>0 \mbox{ is nonincreasing for } 0\le r<1,\\
     K(r) \le 0  \mbox{ for  } r>1.
\end{cases}
\]
Suppose, in addition, that there exist constants $C>0$, $R_0>1$, and
$\alpha>-2\sigma$ such that
\begin{equation}\label{Qeun2608}
   |K(x)|\ge C|x|^\alpha
    \quad \text{for } |x| \geq R_0.
\end{equation}
 Then $u \equiv 0$ in $\mathbb{R}^n$.
\end{theorem}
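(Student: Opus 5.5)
Our plan combines the integral-equation method used for Theorem \ref{thm negative} with a moving-sphere argument on the ball $B_1$, where $K>0$.

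\textbf{Step 1: integral representation.} As in the proof of Theorem \ref{thm negative}, we first show that any nonnegative distributional solution $u\in\mathcal{L}_\sigma(\mathbb{R}^n)\cap C(\mathbb{R}^n)$ satisfies
\[
u(x)=c_{n,\sigma}\int_{\mathbb{R}^n}\frac{K(y)u(y)^{\frac{n+2\sigma}{n-2\sigma}}}{|x-y|^{n-2\sigma}}\,dy + \gamma,\qquad \gamma\ge 0 .
\]
The derivation for sign-changing $K$ needs care. We split $K=K^+-K^-$, where $K^+$ is supported in $\overline{B_1}$ and bounded, so the positive part of the potential is finite and continuous. The $K^-$ part is handled as in Theorem \ref{thm negative}. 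There the condition \eqref{Qeun2608} forces $\int |K|u^{p}|y|^{-(n-2\sigma)}\,dy<\infty$, with $p=\frac{n+2\sigma}{n-2\sigma}$. If $\gamma>0$, we test against a cutoff on large annuli: since $|K|\ge C|x|^\alpha$ with $\alpha>-2\sigma$ and $u\ge\gamma/2$ there, the right-hand side would force $\int_{|y|>R}|y|^{\alpha+2\sigma-n}\,dy<\infty$, which is false. So $\gamma=0$. We then also get that $v:=u$ is a superposition of Riesz potentials with finite mass $\int|K|u^p<\infty$.

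\textbf{Step 2: decay at infinity.} From the representation with $\gamma=0$ and the integrability above, we aim to show $u(x)=O(|x|^{2\sigma-n})$, or at least $u(x)\to0$. If $u\not\equiv0$, the positive part makes $u(x)\ge c|x|^{2\sigma-n}$ near infinity. Meanwhile the negative part gives $u(x)\le c\,|x|^{2\sigma-n}\int_{B_1}K^+u^p$ plus terms we bound using $u\le$ the $K^+$-potential, which is itself $\lesssim|x|^{2\sigma-n}$. Hence $u\sim|x|^{2\sigma-n}$, which is exactly the fast decay \eqref{KW=con02}. This is the point where the asymptotic assumption of Chen--Li is removed: it is produced rather than assumed.

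\textbf{Step 3: contradiction.} With decay established, we use a Pohozaev/Kazdan--Warner argument or the method of moving spheres centered at the origin on the integral equation. For the Kelvin transform $u_\lambda(x)=(\lambda/|x|)^{n-2\sigma}u(\lambda^2x/|x|^2)$, the comparison $u_\lambda\le u$ on $|x|\ge\lambda$ is driven by the following kernel inequality. For $|y|<\lambda$ and $|y^\lambda|>\lambda$ we need
\[
K(y)\,\text{vs}\,K(y^{\lambda}),
\]
and this holds favorably because $K$ is nonincreasing on $[0,1)$ and $K\le0<K(y)$ outside. We plan to start the spheres at small $\lambda$, push $\lambda$ up to $\lambda=1$ and beyond, and show they never stop. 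That yields $u_\lambda\le u$ for all $\lambda>0$, hence $u$ is radially nondecreasing in the sense $|x|^{(n-2\sigma)/2}u$ is monotone. This contradicts $u\sim|x|^{2\sigma-n}$ unless $u\equiv0$.

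Alternatively, for Step 3 we could apply \eqref{KW=con01}: $x\cdot\nabla K$ is $\le0$ in $B_1$ in the distributional sense, and the jump and negative region must be handled. We prefer moving spheres because $K$ is only continuous. We expect the main obstacle to be Step 2, namely ensuring that the negative part cannot destroy the $|x|^{2\sigma-n}$ decay and justifying $\gamma=0$ rigorously.
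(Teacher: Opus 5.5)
Your Steps 1--2 contain what is actually needed. Theorem \ref{thm equivalence} applies directly because $Ku^\tau\le 0$ outside $B_1$, so no constant $\gamma$ appears. Discarding the nonpositive part of the potential then gives $u(x)\le C|x|^{2\sigma-n}$ for $|x|\ge 2$. Two of your other claims are not justified: the lower bound $u\ge c|x|^{2\sigma-n}$ (the $K^-$ part of the potential can cancel the $K^+$ part at infinity) and the finite mass $\int|K|u^\tau<\infty$. Neither is needed.

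\textbf{The gap is in Step 3: your moving-sphere comparison points the wrong way.} Set $\tau=\frac{n+2\sigma}{n-2\sigma}$. For $x\in B_\lambda\setminus\{0\}$ and a kernel $k_\lambda(x,y)>0$,
\[
u(x)-u_\lambda(x)=\int_{B_\lambda}k_\lambda(x,y)\bigl(K(y)u^\tau(y)-K(y^\lambda)u_\lambda^\tau(y)\bigr)\,dy .
\]
You correctly identify $K(y)\ge K(y^\lambda)$ for $y\in B_\lambda$, $\lambda\le 1$. That inequality controls the bracket when proving $u\ge u_\lambda$ in $B_\lambda$, which is equivalent to $u\le u_\lambda$ on $\{|x|\ge\lambda\}$. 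This is the opposite of your $u_\lambda\le u$ on $\{|x|\ge\lambda\}$.

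In fact your comparison fails outright at $\lambda=1$. There $K(y^1)\le 0<K(y)$, so the bracket is $\ge K(y)u^\tau(y)\ge 0$. It is positive on an open set, because $u\not\equiv 0$ in $B_1$ (otherwise the representation gives $u\le 0$). Hence $u>u_1$ in $B_1\setminus\{0\}$, i.e.\ $u<u_1$ on $\{|x|>1\}$.

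The scheme you describe (spheres from small $\lambda$ to $\infty$, giving $|x|^{\frac{n-2\sigma}{2}}u$ nondecreasing along rays) is the one for Theorem \ref{thm Lin}, and it needs $K$ nondecreasing. Moreover, for $\lambda>1$ there is no relation between $K(y)$ and $K(y^\lambda)$ when $1<|y|<\lambda$, since $K$ is only known to be $\le 0$ there. So no sphere can be pushed past $\lambda=1$ in either direction. Your Kazdan--Warner alternative fails for the same reason: $x\cdot\nabla K$ has no sign on $\{|x|>1\}$.

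\textbf{What works is the reverse procedure.} Start at $\lambda=1$, where the strict comparison $u>u_1$ comes for free as above and also gives $u>0$ in $B_1\setminus\{0\}$. Then decrease $\lambda$, showing $u\ge u_\lambda$ in $B_\lambda\setminus\{0\}$ for every $\lambda\in(0,1]$. At a would-be critical $\lambda_0>0$, strictness comes from the region $0<|y|<\lambda_0^2$: there $|y^{\lambda_0}|>1$, so $K(y)>0\ge K(y^{\lambda_0})$. Continuation below $\lambda_0$ is the usual Hardy--Littlewood--Sobolev/small-measure argument. This is where your Step 2 decay is really used: it bounds $u_\lambda$ uniformly on $B_{\lambda_0}$ for $\lambda$ near $\lambda_0$.

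The contradiction then occurs at the origin, not at infinity. Taking $x=r\theta$ and $\lambda=\sqrt{r/2}$ gives $x^\lambda=\theta/2$ and
\[
u(r\theta)\ge (2r)^{-\frac{n-2\sigma}{2}}\,u(\theta/2)\to\infty \quad\text{as } r\to 0,
\]
which contradicts the continuity of $u$ at $0$.
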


Theorem \ref{thm:no-asymptotic} improves the result of Chen-Li \cite{CL3} even in the classical case $\sigma = 1$, by removing the following asymptotic assumption at infinity
\begin{equation}\label{eq 1.2}
\lim_{|x| \to \infty} |x|^{n-2\sigma}u(x) = M  \mbox{ for some } M>0.
\end{equation}
Similarly, it also improves the result of Chen-Li-Zhang \cite{CLZ}, which extends the result of Chen-Li \cite{CL3} to the fractional setting $0<\sigma<1$. When $0 < \sigma \leq 1$, since the operator $(-\Delta)^\sigma$ satisfies the maximum principle and the solution $u$ is assumed to have the asymptotic behavior \eqref{eq 1.2} at infinity, the proofs in Chen-Li \cite{CL3} and Chen-Li-Zhang \cite{CLZ} use the moving sphere method directly on the equation \eqref{sobolev eq 7}. However, the maximum principle fails for general higher order operators, and condition \eqref{eq 1.2} is not assumed in Theorem \ref{thm:no-asymptotic}.  So our approach is quite different from theirs. We first establish an integral representation for \eqref{sobolev eq 7} with sign-changing $K(x)$, from which we derive the decay estimate $u(x)=O\bigl(|x|^{-(n-2\sigma)}\bigr)$ at infinity. The moving sphere method is then applied to the resulting integral equation. In this process, we also need some new techniques to handle the absence of \eqref{eq 1.2} at infinity.

In the case $\sigma = 1$, the exponent condition $\alpha>-2$ in \eqref{Qeun2608} is sharp for the Liouville-type result in Theorem \ref{thm:no-asymptotic}. Indeed, Ni \cite{ni1982elliptic} constructed infinitely many positive solutions to \eqref{sobolev eq 7} under the decay assumption $|K(x)| \leq C |x|^{\beta}$ for $|x|$ large, with $\beta <-2$. When restricted to the standard sphere $(\mathbb{S}^{n}, g)$ (equivalently, condition \eqref{eq 1.2} holds), the assumption \eqref{Qeun2608} on $K$ can be removed. More precisely, we consider positive solutions of the $Q$-curvature equation
\begin{equation}\label{eq prescribing}
P_\sigma(u) = K(x) u^{\frac{n+2\sigma}{n-2\sigma}}\quad \text{on } \mathbb{S}^{n},
\end{equation}
where $P_\sigma$ is an intertwining operator. It can be viewed as the pull back operator of the higher order fractional Laplacian $(-\Delta)^\sigma$ in $\mathbb{R}^n$ via the stereographic projection under the assumption \eqref{eq 1.2} (see \cite{MR1316845,jin-nirenberg-2017}). Then we have the following result.

\begin{corollary}\label{thm sphere}
Let $K$ be continuous and rotationally symmetric on the standard sphere $(\mathbb{S}^{n}, g)$. In addition, assume that $K$ is monotone in the region where $K > 0$ and $K \not\equiv C$.
Then for every $0 < \sigma < n/2$, equation \eqref{eq prescribing} does not admit any positive solution $u \in C(\mathbb{S}^{n})$.
\end{corollary}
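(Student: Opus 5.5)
We transfer the equation to $\mathbb{R}^n$ by stereographic projection and then treat the two monotonicity directions separately. One direction is handled by a moving sphere argument on an integral equation. The other is handled by Theorem \ref{thm radial} when $K>0$ everywhere, and by the Kazdan--Warner identity \eqref{KW=con01} when $K$ changes sign.

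\textbf{Reduction to $\mathbb{R}^n$.} Let $\pi:\mathbb{S}^n\setminus\{N\}\to\mathbb{R}^n$ be stereographic projection, with the north pole $N$ on the axis of symmetry of $K$. Set
\[
v(x)=\Big(\frac{2}{1+|x|^2}\Big)^{\frac{n-2\sigma}{2}}u(\pi^{-1}(x)).
\]
By the intertwining property, $v$ is a positive continuous solution of $(-\Delta)^\sigma v=\tilde K(x)v^{\frac{n+2\sigma}{n-2\sigma}}$ in $\mathbb{R}^n$, where $\tilde K(x)=K(\pi^{-1}x)=\tilde K(|x|)$. Since $u\in C(\mathbb{S}^n)$ is positive, $v$ satisfies \eqref{eq 1.2}: $|x|^{n-2\sigma}v(x)\to 2^{\frac{n-2\sigma}{2}}u(N)>0$. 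In particular $v\in\mathcal{L}_\sigma$ and $v$ has the fast decay \eqref{KW=con02}. Moreover, $v$ admits the integral representation
\[
v(x)=c_{n,\sigma}\int_{\mathbb{R}^n}|x-y|^{2\sigma-n}\tilde K(y)v^{\frac{n+2\sigma}{n-2\sigma}}(y)\,dy,
\]
which is simply the Green representation on $\mathbb{S}^n$ pulled back. This integral form is what makes the argument valid for all $\sigma\in(0,n/2)$, including those where no maximum principle is available. Since $r=|x|$ corresponds monotonically to the polar angle, monotonicity of $K$ on $\{K>0\}$ becomes monotonicity of $\tilde K$ in $r$ on $\{\tilde K>0\}$. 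Replacing $x$ by the antipodal point (an inversion, under which $v$ transforms by a Kelvin transform and the class of solutions is preserved) lets us choose which direction of monotonicity we face.

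\textbf{Case 1: $K>0$ everywhere.} Orient so that $\tilde K$ is nondecreasing in $r$ and nonconstant. Theorem \ref{thm radial} applies directly and gives $v\equiv0$, a contradiction. (Alternatively, \eqref{KW=con01} gives $\int r\tilde K'(r)v^{2n/(n-2\sigma)}=0$ with a nonnegative integrand that is not identically zero; if $\tilde K$ is merely continuous one mollifies, or uses the integral form of Kazdan--Warner.)

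\textbf{Case 2: $K$ changes sign.} By rotational symmetry and monotonicity on $\{K>0\}$, this set is a spherical cap, after possibly flipping the poles. Either $\tilde K>0$ on $\{r<r_0\}$ and nonincreasing there, with $\tilde K\le0$ for $r>r_0$, or the reverse. Scaling $x\mapsto r_0x$ normalizes $r_0=1$, and in the first configuration we are exactly in the hypotheses of Theorem \ref{thm:no-asymptotic} except for \eqref{Qeun2608}. Inspecting its proof, the lower bound $|K|\ge C|x|^\alpha$ is used only to derive the decay $u=O(|x|^{2\sigma-n})$ and the asymptotic control at infinity, which are exactly what \eqref{eq 1.2} supplies here. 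With \eqref{eq 1.2} in hand, we run the moving sphere argument on the integral equation with centres at the origin, comparing $v$ with its Kelvin transforms $v_\lambda$ for $\lambda\le1$ as in Chen--Li, and conclude a contradiction. The other configuration, $\tilde K\le0$ near $0$ and positive nondecreasing outside, is reduced to this one by the inversion $x\mapsto x/|x|^2$, which preserves the equation and \eqref{eq 1.2} and turns nondecreasing into nonincreasing.

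The main obstacle is Case 2: we must verify carefully that the moving sphere step of Theorem \ref{thm:no-asymptotic} goes through using \eqref{eq 1.2} in place of \eqref{Qeun2608}. In particular, the sphere must start moving from small $\lambda$ and be pushed up to $\lambda=1$ without decay information on $K$. Here the exact limit $|x|^{n-2\sigma}v\to M>0$ should give the required comparison $v_\lambda\le v$ near infinity.
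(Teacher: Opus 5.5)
Your reduction to $\mathbb{R}^n$ is fine. Pulling back the Green representation of $P_\sigma$ on $\mathbb{S}^n$ is a clean substitute for what the paper does: the paper re-derives the integral equation on $\mathbb{R}^n$ from \eqref{eq 5.4} using test functions and the argument of Theorem~\ref{thm equivalence}. That representation is in fact the only place where \eqref{Qeun2608} enters the proof of Theorem~\ref{thm:no-asymptotic}.

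The genuine gap is in Case~2: you set up the moving sphere in the wrong direction. Starting at small $\lambda$ with $v_\lambda\le v$ near infinity means $v_\lambda\le v$ on $B_\lambda^c$, which is equivalent to $v\le v_\lambda$ in $B_\lambda$. This comparison cannot be propagated. In \eqref{eq 5.3} you would need an upper bound for $K(y)v^\tau(y)-K(y^\lambda)v_\lambda^\tau(y)=K(y)\bigl(v^\tau-v_\lambda^\tau\bigr)(y)+\bigl(K(y)-K(y^\lambda)\bigr)v_\lambda^\tau(y)$. Since $K(y)\ge K(y^\lambda)$ on $B_\lambda$ for $\lambda\le1$, the last term is a nonnegative forcing term, and it is not small: for $|y|<\lambda^2$ one has $|y^\lambda|>1$, so it is at least $K(y)v_\lambda^\tau(y)>0$. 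Indeed, at $\lambda=1$ the sign structure forces the \emph{opposite} strict inequality $v>v_1$ in $B_1\setminus\{0\}$. So your procedure stalls at some $\bar\lambda<1$, where the identity at a touching point has an integrand of indefinite sign and gives no contradiction. Nor can you start at small $\lambda$ with the reverse comparison $v\ge v_\lambda$ in $B_\lambda$: by \eqref{eq 1.2}, $v_\lambda(x)\to M\lambda^{2\sigma-n}$ as $x\to0$, which exceeds $v(0)$ once $\lambda$ is small.

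The argument that works, and the one the paper runs, goes the other way.
\begin{itemize}
\item At $\lambda=1$ you get $v>v_1$ in $B_1\setminus\{0\}$ for free, from $K(y)>0\ge K(y^1)$ and positivity of the kernel. You then \emph{decrease} $\lambda$.
\item At a putative critical $\lambda_0>0$, strictness of $v\ge v_{\lambda_0}$ comes from the region $|y|<\lambda_0^2$, where $K(y)>0\ge K(y^{\lambda_0})$.
\item The continuation below $\lambda_0$ (the Hardy--Littlewood--Sobolev step with $|\Sigma_\lambda^-|\to0$) only needs $\sup_{\lambda\in[\lambda_0/2,\lambda_0]}\|v_\lambda\|_{L^\infty(B_{\lambda_0})}<\infty$. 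That is, it needs only the upper bound $v=O(|x|^{2\sigma-n})$; the exact value of $M$ plays no role.
\item Having $v\ge v_\lambda$ in $B_\lambda$ for all $\lambda\in(0,1]$ gives $v(r\theta)\ge(2r)^{-\frac{n-2\sigma}{2}}v(\theta/2)\to\infty$ as $r\to0$, contradicting continuity at the origin.
\end{itemize}
So the contradiction is blow-up at the centre as $\lambda\downarrow0$, not something that happens on reaching $\lambda=1$. The missing idea in your sketch is exactly this choice of starting sphere and direction.

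A smaller point on Case~1: Theorem~\ref{thm radial} assumes $K\in C^1$, and mollifying $K$ is not legitimate, since $v$ solves the equation only for the given $K$. For merely continuous $K$, use instead the moving-sphere argument of Lemma~\ref{lemma 3.1}, which needs only continuity, monotonicity and non-constancy of $K$ together with the integral equation. It shows that $|x|^{\frac{n-2\sigma}{2}}v(x)$ is nondecreasing along rays. That is incompatible with $|x|^{\frac{n-2\sigma}{2}}v(x)\to0$, which follows from \eqref{eq 1.2}.
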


The paper is organized as follows. In Section \ref{S2}, we establish the equivalence between the higher order fractional equation \eqref{sobolev eq 7} and its corresponding integral equation, and then prove Theorem \ref{thm negative}. Sections \ref{radially} and \ref{non-rad} are devoted to proving Theorem \ref{thm radial} and Theorem \ref{thm Lin}, respectively. In Section \ref{sign-changing}, we show Theorem \ref{thm:no-asymptotic} and Corollary \ref{thm sphere}. In the appendix, we present some properties of hypergeometric functions and detailed calculations of the Pohozaev integrals applied in Theorem \ref{thm Lin}.

\section{Equivalence of the PDE and its integral representation}\label{S2}

In this section, our main goal is to establish the equivalence between \eqref{sobolev eq 7} and its integral equation. More generally, we will establish this equivalence for the following inhomogeneous equation
\begin{equation}\label{eq general}
    (-\Delta)^\sigma u  = f(x, u) \quad \text{in } \mathbb{R}^n.
\end{equation}
Our approach is inspired by \cite{ao2020removability,Y}, where the special case $f(x, u)=|x|^\alpha u^p$ was considered. We will extend this framework to general nonlinearities, including sign-changing $f(x, u)$. We also refer to Cao-Dai-Qin \cite{CDQ} on the integral representation for classical solutions of \eqref{eq general}, which is quite different from our assumptions on $f(x, u)$. We first recall the following lemma from Ao-Gonz\'alez-Hyder-Wei \cite[Lemma 5.2]{ao2020removability}.

\begin{lemma}[\cite{ao2020removability}]\label{lemma remo}
Let $\psi \in C^{\infty}\left(\mathbb{R}^n\right)$ be such that $\psi(x)=\frac{1}{|x|^\tau}$ on $B_1^c$ for some $\tau>0$. Let $\xi \in C^{\infty}(\mathbb{R}^n)$ be a cut-off function satisfying
\begin{equation*}
\xi(x)=1 \quad \text { for }|x| \leq 1 \quad \text { and } \quad \xi(x)=0 \quad\text { for }|x| \geq 2 .
\end{equation*}
For any $\varepsilon>0$, we define $\xi_{\varepsilon}:=\xi(\varepsilon x)$ and $\psi_{\varepsilon}(x):=\psi \xi_{\varepsilon}(x)$. Then for every $\sigma>0$ we have
\begin{equation*}
 (-\Delta)^\sigma \psi_{\varepsilon} \rightarrow(-\Delta)^\sigma \psi \quad \text { locally uniformly in } \mathbb{R}^n \text { as } \varepsilon \rightarrow 0.
\end{equation*}
Moreover, there exists $C=C(n, \sigma, \psi)>0$ (independent of $\varepsilon$) such that
\begin{equation*}
  \left|(-\Delta)^\sigma \psi_{\varepsilon}(x)\right|
  \leq C
  \begin{cases}
  (1+|x|)^{-2 \sigma-\tau} & \text { if } \tau<n, \\
  (1+|x|)^{-2 \sigma-\tau} \log (2+|x|) & \text { if } \tau=n, \\
  (1+|x|)^{-2 \sigma-n} & \text { if } \tau>n.
  \end{cases}
\end{equation*}

\end{lemma}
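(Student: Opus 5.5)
The plan is to split $(-\Delta)^\sigma=(-\Delta)^{\{\sigma\}}\circ(-\Delta)^{[\sigma]}$ and handle the integer and fractional parts separately, writing $m=[\sigma]$ and $s=\{\sigma\}$. The local uniform convergence is the easy part. On any ball $B_R$ and for $\varepsilon<1/(2R)$, the function $\psi_\varepsilon-\psi$ vanishes on $B_{1/\varepsilon}$. Applying $(-\Delta)^m$, a local operator, therefore gives $(-\Delta)^m\psi_\varepsilon=(-\Delta)^m\psi$ on $B_{1/\varepsilon}$. It remains to check that $(-\Delta)^s$ of the difference $g_\varepsilon:=(-\Delta)^m(\psi_\varepsilon-\psi)$ tends to zero uniformly on $B_R$. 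For $x\in B_R$ and $s\in(0,1)$, since $g_\varepsilon$ vanishes near $x$, we have
\[
(-\Delta)^s g_\varepsilon(x)=-C_{n,s}\int_{|y|\ge 1/\varepsilon}\frac{g_\varepsilon(y)}{|x-y|^{n+2s}}\,dy .
\]
Each derivative of $\xi_\varepsilon$ carries a factor $\varepsilon$, and $|y|\sim 1/\varepsilon$ there, so $|g_\varepsilon(y)|\le C|y|^{-\tau-2m}$ uniformly in $\varepsilon$. The integral is then bounded by $C\int_{|y|\ge1/\varepsilon}|y|^{-\tau-2m-n-2s}\,dy\to0$. If $s=0$ the difference is already identically zero on $B_R$.

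For the uniform bound, I would estimate $h_\varepsilon:=(-\Delta)^m\psi_\varepsilon$ pointwise. A direct computation gives $|h_\varepsilon(y)|\le C(1+|y|)^{-\tau-2m}$ uniformly in $\varepsilon$. The same structure holds for the derivatives, $|\nabla^k h_\varepsilon(y)|\le C(1+|y|)^{-\tau-2m-k}$ for $k\le 2$, again uniformly in $\varepsilon$, because on the annulus $1/\varepsilon\le|y|\le2/\varepsilon$ every $\varepsilon$-factor is comparable to $|y|^{-1}$. If $s=0$ this already gives the claim with $-2\sigma-\tau$. Note that for integer $\sigma$ and $\tau\ge n$ the bound $(1+|x|)^{-2\sigma-\tau}$ is even better than stated. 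One caveat: the support of $h_\varepsilon$ is all of $\mathbb R^n$ whenever $\tau$ is not special, but $(-\Delta)^m|x|^{-\tau}$ can vanish identically, for instance when $\tau=n-2$, $m=1$. The bound stays valid in that case since only an upper estimate is required.

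For $s\in(0,1)$, I would fix $x$ with $|x|$ large and split the singular integral
\[
(-\Delta)^s h_\varepsilon(x)=C\,\mathrm{P.V.}\int\frac{h_\varepsilon(x)-h_\varepsilon(y)}{|x-y|^{n+2s}}\,dy
\]
into three regions. The region $|y-x|<|x|/2$ uses the second-order Taylor expansion together with the derivative bounds, giving $C|x|^{-\tau-2m-2}|x|^{2-2s}=C|x|^{-\tau-2\sigma}$. On the region $|y|>2|x|$ and the region $|y|\ge|x|/2$ away from $x$, bound $|h_\varepsilon(x)|$ and $|h_\varepsilon(y)|$ separately by the decay estimates, which contributes $C|x|^{-\tau-2\sigma}$. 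The remaining region $|y|<|x|/2$ contributes $|x|^{-n-2s}\int_{|y|<|x|/2}|h_\varepsilon(y)|\,dy$. This is where the trichotomy appears: with $\tau'=\tau+2m$, the integral is bounded by $|x|^{n-\tau'}$, by $\log|x|$, or by a constant according as $\tau'<n$, $\tau'=n$, or $\tau'>n$.

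The main obstacle is matching the thresholds exactly as stated, with $\tau$ rather than $\tau+2m$ compared against $n$. When $m\ge1$ the crude bound above gives the case split at $\tau+2m$, which is wrong for $n-2m<\tau\le n$, where it would yield only $|x|^{-n-2s}$. To fix this I would use cancellation. For $m\ge1$, the function $h_\varepsilon$ is a divergence (a Laplacian), so $\int_{|y|<r}h_\varepsilon$ equals a boundary flux of order $r^{n-1}\cdot r^{-\tau-2m+1}$. I would exploit this by writing $h_\varepsilon=(-\Delta)^{m}\psi_\varepsilon$ and applying $(-\Delta)^\sigma\psi_\varepsilon=(-\Delta)^s(-\Delta)^m\psi_\varepsilon$ in Fourier/kernel form as a composition. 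Alternatively, and more simply, I would write $(-\Delta)^\sigma\psi_\varepsilon(x)$ via the hypersingular kernel of order $2\sigma$ with a higher-order finite difference, and redo the three-region split directly on $\psi_\varepsilon$ using $|\nabla^k\psi_\varepsilon(y)|\le C(1+|y|)^{-\tau-k}$. The far-in region then contributes $|x|^{-n-2\sigma}\int_{|y|<|x|/2}|\psi_\varepsilon|$, which produces exactly the three cases $\tau<n$, $\tau=n$, $\tau>n$. Bounding the near region by Taylor expansion of order $2[\sigma]+2$ gives $|x|^{-\tau-2\sigma}$.
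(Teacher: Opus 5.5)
The paper gives no proof of this lemma. It is quoted from Ao--Gonz\'alez--Hyder--Wei \cite{ao2020removability}, so there is nothing internal to compare with, and your proposal has to stand on its own. It does.

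The local uniform convergence argument is complete, and the integer case follows from locality. For non-integer $\sigma$, your final route works: with $k=[\sigma]+1$, write $(-\Delta)^\sigma\psi_\varepsilon(x)=c\int_{\mathbb{R}^n}|h|^{-n-2\sigma}\sum_{j=0}^{2k}(-1)^{j+k}\binom{2k}{j}\psi_\varepsilon(x+(k-j)h)\,dh$, and do the three-region split directly on $\psi_\varepsilon$. For $|x|\ge 2$ (bounded $x$ is trivial) this gives $|(-\Delta)^\sigma\psi_\varepsilon(x)|\le C|x|^{-\tau-2\sigma}+C|x|^{-n-2\sigma}\int_{B_{|x|/2}}|\psi_\varepsilon|$, which is exactly the stated trichotomy.

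What this buys over the naive split through $(-\Delta)^{\{\sigma\}}\circ(-\Delta)^{[\sigma]}$ is that the far-field kernel already decays like $|x-y|^{-n-2\sigma}$. The cancellation hidden in $(-\Delta)^{[\sigma]}\psi_\varepsilon$ never has to be extracted by hand, so $\tau$ rather than $\tau+2[\sigma]$ is what gets compared with $n$.

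If you prefer to keep the paper's composition definition, your cancellation idea also works, but not as written. The flux bound on $\int_{B_r}h_\varepsilon$ does not control $\int_{B_{|x|/2}}h_\varepsilon(y)|x-y|^{-n-2s}\,dy$, because the kernel is not radial in $y$. Instead, integrate by parts $m$ times on $B_{|x|/2}$ to move $(-\Delta)^m$ onto the smooth kernel. This yields $C|x|^{-n-2\sigma}\int_{B_{|x|/2}}|\psi_\varepsilon|$ plus boundary terms of size $|x|^{n-1}\cdot|x|^{-\tau-n-2\sigma+1}=|x|^{-\tau-2\sigma}$.

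A few things to tighten:
\begin{itemize}
\item The composition bound fails for every $\tau\ge n-2m$ once $m\ge1$, not only for $n-2m<\tau\le n$. At $\tau=n-2m$ it produces a spurious logarithm, and for $\tau>n$ it gives $|x|^{-n-2s}$ instead of $|x|^{-n-2\sigma}$. Your final route treats all $\tau$ at once, so this is only a mislabeling.
\item In the finite-difference route, record why the hypersingular integral equals $(-\Delta)^{\{\sigma\}}(-\Delta)^{[\sigma]}\psi_\varepsilon$. On $C_c^\infty$ both are the Fourier multiplier $|\zeta|^{2\sigma}$, because $\int_{\mathbb{R}^n}\sin^{2k}(h\cdot\zeta/2)\,|h|^{-n-2\sigma}\,dh=c\,|\zeta|^{2\sigma}$ for $0<\sigma<k$.
\item Spell out the far region. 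After the substitution $y=x+(k-j)h$, each term with $j\ne k$ becomes $\int_{|y-x|\gtrsim|x|}|\psi_\varepsilon(y)|\,|x-y|^{-n-2\sigma}\,dy$. This splits into $|y|<|x|/2$, which gives the trichotomy, and $|y|\ge|x|/2$, which is $O(|x|^{-\tau-2\sigma})$. The $j=k$ term is also $O(|x|^{-\tau-2\sigma})$.
\item Your bounds $|\nabla^i\psi_\varepsilon(y)|\le C(1+|y|)^{-\tau-i}$ are uniform only for $\varepsilon$ in a bounded range such as $(0,1]$. That restriction is genuinely needed: for $\sigma\in(0,1)$, $0\le\xi\le1$ and $\psi(0)\ne0$, one has $(-\Delta)^\sigma\psi_\varepsilon(0)\sim c\,\psi(0)\,\varepsilon^{2\sigma}$ with $c>0$ as $\varepsilon\to\infty$. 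It is harmless here, since the paper only uses $\varepsilon\to0$.
\end{itemize}
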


To proceed further, we impose the following structural assumption on the nonlinearity $f$.

\begin{assumption}\label{F}
     Let $f(x, t)$ be locally bounded in $x \in \mathbb{R}^n$ and $t>0$.
    Suppose there exist $R_0>0$, $C>0$, $\alpha >-2\sigma$ and $p>1$ such that
    \begin{equation*}
        |f(x, t)| \geq C |x|^\alpha t^p \quad \text{for } |x| \geq R_0 \text{ and } t\ge 0.
    \end{equation*}
\end{assumption}

\begin{proposition}\label{prop u lift}
Let $\sigma \in (0,  n/2)$ and $u \in \mathcal{L}_\sigma(\mathbb{R}^n)$ be a nonnegative distributional solution of
\eqref{eq general}. Suppose that $f$ satisfies Assumption \ref{F}, $f(\cdot, u) \in L_{\textmd{loc}}^1(\mathbb{R}^n)$, and there exists $R_1>0$ such that $f(x, u(x)) \geq 0$ or $f(x, u(x)) \leq 0$ in $\mathbb{R}^n \setminus B_{R_1}$.
\begin{enumerate}
    \item [(1)]  If $1 < p < \frac{n+\alpha}{n-2\sigma}$, then
    \begin{equation}\label{sobolev eq 2}
    \begin{aligned}
         & \int_{\mathbb{R}^n} \frac{|f(x,u(x))|}{1+|x|^\gamma}\, dx < +\infty
         \quad \text{for every } \gamma > 0, \\
         & \int_{\mathbb{R}^n} \frac{u(x)}{1+|x|^q}\, dx < +\infty
         \quad \text{for every } q > n - \frac{n+\alpha}{p}.
    \end{aligned}
    \end{equation}

    \item [(2)]  If $p \geq \frac{n+\alpha}{n-2\sigma}$, then
    \begin{equation}\label{sobolev eq 3}
    \begin{aligned}
         & \int_{\mathbb{R}^n} \frac{|f(x,u(x))|}{1+|x|^\gamma}\, dx < +\infty
         \quad \text{for every } \gamma > n-2\sigma-\frac{\alpha+2\sigma}{p-1}, \\
         & \int_{\mathbb{R}^n} \frac{u(x)}{1+|x|^q}\, dx < +\infty
         \quad \text{for every } q > n - \frac{\alpha+2\sigma}{p-1}.
    \end{aligned}
    \end{equation}
\end{enumerate}
\end{proposition}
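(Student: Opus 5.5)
Since $f(x,u)$ has a fixed sign outside $B_{R_1}$, we may replace $u$ by a sign convention and work with $|f|$ there. The plan is to test the distributional equation against the functions $\psi_\varepsilon = \psi\,\xi_\varepsilon$ of Lemma \ref{lemma remo}. Here $\psi$ is smooth, positive, and equal to $|x|^{-\tau}$ outside $B_1$, with $\tau>0$ to be chosen. The idea is to balance the two sides of
\[
\int_{\mathbb{R}^n} u\,(-\Delta)^\sigma\psi_\varepsilon = \int_{\mathbb{R}^n} f(x,u)\,\psi_\varepsilon .
\]
Split the right side into the part over $B_{R}$ with $R=\max(R_0,R_1)$, which is bounded since $f(\cdot,u)\in L^1_{\rm loc}$, and the part over $B_R^c$. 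On $B_R^c$ the integrand has a fixed sign and satisfies $|f|\ge C|x|^\alpha u^p$. Taking absolute values then gives
\[
\int_{B_R^c}|f|\psi_\varepsilon \le C + \int |u|\,|(-\Delta)^\sigma\psi_\varepsilon| .
\]
By Lemma \ref{lemma remo}, for $\tau<n$ the last integral is at most $C\int u(1+|x|)^{-2\sigma-\tau}$. This is finite a priori only when $\tau\ge 0$ (since $u\in\mathcal L_\sigma$), but we want to use it in a bootstrap.

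The key step is to bound $\int u(1+|x|)^{-2\sigma-\tau}$ by the $f$-integral via H\"older. On $B_R^c$ we write
\[
u|x|^{-2\sigma-\tau}=\bigl(u|x|^{\alpha/p}|x|^{-\tau/p}\bigr)\,|x|^{-2\sigma-\tau-\alpha/p+\tau/p}.
\]
H\"older's inequality with exponents $p$ and $p'$ then gives
\[
\int_{B_R^c}u|x|^{-2\sigma-\tau}\le \Bigl(\int_{B_R^c}|x|^{\alpha}u^p|x|^{-\tau}\Bigr)^{1/p}\Bigl(\int_{B_R^c}|x|^{(-2\sigma-\tau-\alpha/p+\tau/p)p'}\Bigr)^{1/p'} .
\]
The second factor is finite iff $(2\sigma+\tau+\frac{\alpha-\tau}{p})p'>n$. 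This simplifies to $2\sigma p+\tau(p-1)+\alpha>n(p-1)$, i.e. $\tau>n-\frac{2\sigma p+\alpha}{p-1}=n-2\sigma-\frac{\alpha+2\sigma}{p-1}$.

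Set $I_\varepsilon=\int_{B_R^c}|x|^\alpha u^p\psi_\varepsilon$. We must apply the H\"older step with $\psi_\varepsilon$ rather than $\psi$ to keep $I_\varepsilon$ finite. To do this, truncate instead with $|x|^{-\tau}\xi_\varepsilon^{\,p}$ or simply use $\xi_\varepsilon\le\xi_{\varepsilon/2}$ carefully. Alternatively, first establish finiteness for large $\tau$, which is automatic since the $\tau=0$ bound is free from $u\in\mathcal L_\sigma$. The relation becomes $I_\varepsilon\le C + C I_\varepsilon^{1/p}$, which yields $I_\varepsilon\le C$ uniformly. Monotone convergence then gives $\int |f|(1+|x|)^{-\tau}<\infty$ for every $\tau$ with $\max(0,\,n-2\sigma-\frac{\alpha+2\sigma}{p-1})<\tau<n$. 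Larger $\tau$ are trivial by monotonicity.

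This gives the first assertion of both cases. In case (1), $p<\frac{n+\alpha}{n-2\sigma}$ is exactly equivalent to $n-2\sigma-\frac{\alpha+2\sigma}{p-1}<0$, so every $\gamma>0$ is allowed. In case (2) the threshold is $n-2\sigma-\frac{\alpha+2\sigma}{p-1}\ge0$.

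For the $u$-estimate, apply H\"older again:
\[
\int_{B_R^c}u|x|^{-q}\le\Bigl(\int|x|^{\alpha-\gamma}u^p\Bigr)^{1/p}\Bigl(\int|x|^{(-q+\frac{\gamma-\alpha}{p})p'}\Bigr)^{1/p'} .
\]
The second factor is finite iff $q>n-\frac{n+\alpha-\gamma}{p}$. Letting $\gamma$ approach its threshold gives, in case (1), $q>n-\frac{n+\alpha}{p}$. In case (2) it gives $q>n-\frac{n+\alpha}{p}+\frac1p\bigl(n-2\sigma-\frac{\alpha+2\sigma}{p-1}\bigr)$, and a short computation shows this equals $n-\frac{\alpha+2\sigma}{p-1}$. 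On $B_R$, $u$ is integrable since $u\in L^1_{\rm loc}$.

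The main obstacle is making the absorption $I_\varepsilon\le C+CI_\varepsilon^{1/p}$ rigorous, since $I_\varepsilon$ must be finite and must appear with the same cutoff on both sides. I would handle this by working with $\tau$ slightly above the threshold and inserting the cutoff weight into the H\"older split. The point to check is that $(\xi_\varepsilon)^{1/p}\ge\xi_\varepsilon$ and that the remaining region $\{\xi_\varepsilon\ne\xi_\varepsilon^{1/p}\}$ contributes a term controlled by the $\tau=0$ bound.
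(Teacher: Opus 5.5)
There is a genuine gap at the absorption step $I_\varepsilon\le C+CI_\varepsilon^{1/p}$, and the repair you sketch does not work for non-integer $\sigma$. Your exponent bookkeeping is correct: the threshold $\tau>n-2\sigma-\frac{\alpha+2\sigma}{p-1}$, its sign in the two cases, and the final range $q>n-\frac{\alpha+2\sigma}{p-1}$ are all right.

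The H\"older step is applied to $\int u\,|(-\Delta)^\sigma\psi_\varepsilon|$. Lemma \ref{lemma remo} bounds $|(-\Delta)^\sigma\psi_\varepsilon|$ by $C(1+|x|)^{-2\sigma-\tau}$ on all of $\mathbb{R}^n$, not only on $\operatorname{supp}\xi_\varepsilon$. So what actually comes out is $(\int_{B_R^c}|x|^{\alpha-\tau}u^p)^{1/p}$ with no cutoff. That is, you get $I\le C+CI^{1/p}$ for the untruncated $I$, which is vacuous when $I=+\infty$ --- exactly the case you need to exclude.

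Inserting $\xi_\varepsilon$ or $\xi_\varepsilon^{1/p}$ into the H\"older split cannot fix this when $\sigma\notin\mathbb{N}$, because the operator is nonlocal. The function $(-\Delta)^\sigma\psi_\varepsilon$ does not vanish on $\{|x|\ge 2/\varepsilon\}$, where $\xi_\varepsilon=0$; its tail there is of order $\varepsilon^{\tau-n}|x|^{-n-2\sigma}$. That region is therefore invisible to $I_\varepsilon$. From $u\in\mathcal{L}_\sigma$ alone its contribution is only $\varepsilon^{\tau-n}\cdot o(1)$, which need not stay bounded for $\tau<n$.

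The ``free'' starting bound you appeal to is also misidentified. Membership $u\in\mathcal{L}_\sigma$ gives $\int u(1+|x|)^{-n-2\sigma}<\infty$, which in your parametrization is the range $\tau>n$, not $\tau\ge 0$. For integer $\sigma=m$ your idea does close: with test function $\psi\xi_\varepsilon^k$ and $k\ge 2mp'$, one has $|(-\Delta)^m(\psi\xi_\varepsilon^k)|\le C|x|^{-2m-\tau}\xi_\varepsilon^{k/p}$, supported in $B_{2/\varepsilon}$, and $I_\varepsilon<\infty$ since $f\in L^1_{\mathrm{loc}}$.

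The missing idea is to replace the one-shot absorption by a finite bootstrap, which is the iteration the paper imports from \cite[Proposition~2.4]{Y}.

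\emph{Starting step.} Test first with $\psi=|x|^{-(n+\tau)}$ off $B_1$. Now the right-hand side is finite because $u\in\mathcal{L}_\sigma$, which gives $\int|f|(1+|x|)^{-n-\tau}<\infty$ for all $\tau>0$.

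\emph{Inductive step.} Once $\int_{B_R^c}|f|\,|x|^{-\tau_k}<\infty$, H\"older gives $\int u|x|^{-q}<\infty$ for $q>n-\frac{n+\alpha-\tau_k}{p}$. Choose $\tau_{k+1}$ with $2\sigma+\tau_{k+1}$ slightly above this and $\tau_{k+1}\neq n$. Then testing with $|x|^{-\tau_{k+1}}$ has a right-hand side already known to be finite, so $\int|f|\,|x|^{-\tau_{k+1}}<\infty$.

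\emph{Termination.} The map $\tau\mapsto n-2\sigma-\frac{n+\alpha}{p}+\frac{\tau}{p}$ is a contraction with fixed point $n-2\sigma-\frac{\alpha+2\sigma}{p-1}$. Hence finitely many steps reach any $\tau>\max\{0,\,n-2\sigma-\frac{\alpha+2\sigma}{p-1}\}$, and your final H\"older computation then yields the stated $u$-estimates.
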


\begin{proof}
For any $\tau>0$, take $\psi \in C^\infty(\mathbb{R}^n)$ such that
$\psi(x)=|x|^{-(n+\tau)}$ on $B_1^c$, and let $\psi_\varepsilon$ be defined as in Lemma~\ref{lemma remo}.
Using the dominated convergence theorem and the monotone convergence theorem, we obtain
\begin{equation*}
\begin{aligned}
\int_{\mathbb{R}^n} f(x,u(x)) \psi(x) \, dx
&= \lim_{\varepsilon \to 0} \int_{B_{R_1}} f(x,u(x)) \psi_\varepsilon(x) \, dx
   + \lim_{\varepsilon \to 0} \int_{B_{R_1}^c} f(x,u(x)) \psi_\varepsilon(x) \, dx \\
&= \lim_{\varepsilon \to 0} \int_{\mathbb{R}^n} u(x) (-\Delta)^\sigma \psi_\varepsilon(x) \, dx \\
&= \int_{\mathbb{R}^n} u(x)(-\Delta)^\sigma \psi(x) \, dx.
\end{aligned}
\end{equation*}
Hence
\begin{equation*}
\begin{aligned}
\Big|\int_{\mathbb{R}^n} f(x,u(x)) \psi(x) \, dx\Big|
&= \Big|\int_{\mathbb{R}^n} u(x)(-\Delta)^\sigma \psi(x) \, dx\Big| \\
&\leq \int_{\mathbb{R}^n} u(x) \, |(-\Delta)^\sigma \psi(x)| \, dx < +\infty,
\end{aligned}
\end{equation*}
where the last inequality follows from Lemma~\ref{lemma remo} and $u \in \mathcal{L}_\sigma(\mathbb{R}^n)$.
Therefore,
\begin{equation}\label{eq 1}
\int_{\mathbb{R}^n} \frac{|f(x,u(x))|}{1+|x|^{n+\tau}} \, dx < +\infty,
\quad \forall \, \tau>0.
\end{equation}
By H\"{o}lder's inequality and \eqref{eq 1}, for the constant $R_0$ given in Assumption \ref{F} and any $\tau>0$, we deduce that
\begin{equation*}
\begin{aligned}
\int_{\mathbb{R}^n} \frac{u(x)}{1+|x|^q} \, dx
&\leq C_1 + \int_{B_{R_0}^c} \frac{|x|^{\alpha/p} u(x)}{|x|^{(n+\tau)/p}}
                 \cdot \frac{1}{|x|^{q - (n+\tau-\alpha)/p}} \, dx \\
&\leq C_1 + \Bigg(\int_{B_{R_0}^c} \frac{|x|^\alpha u^p(x)}{|x|^{n+\tau}} \, dx\Bigg)^{1/p}
            \Bigg(\int_{B_{R_0}^c} \frac{dx}{|x|^{(q-(n+\tau-\alpha)/p)\frac{p}{p-1}}}\Bigg)^{1-1/p} \\
&{\leq} C_1 + \Bigg(\int_{B_{R_0}^c} \frac{|f(x,u(x))|}{|x|^{n+\tau}} \, dx\Bigg)^{1/p}
            \Bigg(\int_{B_{R_0}^c} \frac{dx}{|x|^{(qp+\alpha-n-\tau)/(p-1)}}\Bigg)^{1-1/p} \\
&< \infty, \quad  \forall\, q > n + \frac{\tau-\alpha}{p} > n - \frac{\alpha}{p}.
\end{aligned}
\end{equation*}
Thus, the integrability of $u$ lifts from $u \in \mathcal{L}_\sigma(\mathbb{R}^n)$ to $u \in \mathcal{L}_s(\mathbb{R}^n)$ for any $s > -\frac{\alpha}{2p}$.

Finally, applying the iteration method in \cite[Proposition~2.4]{Y}, we obtain the desired estimates (\ref{sobolev eq 2}) and (\ref{sobolev eq 3}).
\end{proof}

By Proposition~\ref{prop u lift}, the function
\begin{equation}\label{def v}
    v(x) := C_{n,\sigma} \int_{\mathbb{R}^n} \frac{f(y,u(y))}{|x-y|^{\,n-2\sigma}} \, dy
\end{equation}
is well-defined for every $x \in \mathbb{R}^n$,
where $C_{n,\sigma}$ is a positive constant such that
$C_{n,\sigma} |\cdot|^{\,2\sigma-n}$ is the fundamental solution of the fractional Laplacian.
We now establish the following growth estimates for $v$ at infinity.

\begin{proposition}\label{prop v}
Under the assumptions of Proposition~\ref{prop u lift}, let $v$ be defined by \eqref{def v}.
\begin{enumerate}
    \item If $p \geq \frac{n+\alpha}{n-2\sigma}$, then $v \in \mathcal{L}_s(\mathbb{R}^n)$ for any $s > -\frac{\alpha+2\sigma}{2(p-1)}$.
    \item If $1 < p < \frac{n+\alpha}{n-2\sigma}$, then $v \in \mathcal{L}_s(\mathbb{R}^n)$ for any $s > -\frac{n-2\sigma}{2}$.
\end{enumerate}
In particular, in both cases we have $v \in \mathcal{L}_0(\mathbb{R}^n)$.
\end{proposition}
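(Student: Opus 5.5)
We estimate $\int_{\mathbb{R}^n} |v(x)|(1+|x|)^{-n-2s}\,dx$ directly. By \eqref{def v} and Tonelli,
\[
\int_{\mathbb{R}^n}\frac{|v(x)|}{(1+|x|)^{n+2s}}\,dx \le C_{n,\sigma}\int_{\mathbb{R}^n}|f(y,u(y))|\,\Phi_s(y)\,dy,
\qquad \Phi_s(y):=\int_{\mathbb{R}^n}\frac{dx}{|x-y|^{n-2\sigma}(1+|x|)^{n+2s}}.
\]
This reduces the claim to an elementary estimate on $\Phi_s$ combined with the weighted integrability of $f(\cdot,u)$ from Proposition~\ref{prop u lift}.

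\textbf{Step 1: the kernel bound.} Take $s>-\frac{n-2\sigma}{2}$, i.e.\ $n+2s>2\sigma$, and assume also $n+2s<n$, i.e.\ $s<0$; larger $s$ follow by monotonicity of $\mathcal{L}_s$ in $s$. Split the $x$-integral into $\{|x|<|y|/2\}$, $\{|x-y|<|y|/2\}$ and the remaining region. On each region the standard computation gives, for $|y|\ge1$,
\[
\Phi_s(y)\le C\,(1+|y|)^{2\sigma-n-2s}.
\]
The exponent $n+2s<n$ makes the near-origin piece contribute $|y|^{2\sigma-n}|y|^{n-(n+2s)}$. The local singularity $|x-y|^{2\sigma-n}$ is integrable. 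The far region converges because $2\sigma-n-(n+2s)<-n$, which is exactly $s>\sigma-n/2$. Moreover $\Phi_s$ is bounded on $B_1$. Hence
\[
\int_{\mathbb{R}^n}\frac{|v|}{(1+|x|)^{n+2s}}\,dx\le C\int_{\mathbb{R}^n}\frac{|f(y,u(y))|}{1+|y|^{\gamma}}\,dy,
\qquad \gamma=n+2s-2\sigma.
\]

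\textbf{Step 2: apply Proposition~\ref{prop u lift}.} In case (2), $p\ge \frac{n+\alpha}{n-2\sigma}$, we need $\gamma>n-2\sigma-\frac{\alpha+2\sigma}{p-1}$. This is equivalent to $s>-\frac{\alpha+2\sigma}{2(p-1)}$, so \eqref{sobolev eq 3} gives finiteness. Here $\frac{\alpha+2\sigma}{p-1}\le n-2\sigma$ because $p-1\ge\frac{\alpha+2\sigma}{n-2\sigma}$. Therefore this range of $s$ lies inside $s>-\frac{n-2\sigma}{2}$, which is what Step 1 required. In case (1), \eqref{sobolev eq 2} allows any $\gamma>0$, i.e.\ any $s>\sigma-n/2=-\frac{n-2\sigma}{2}$.

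\textbf{Step 3: the endpoint $s=0$.} In case (2) we have $-\frac{\alpha+2\sigma}{2(p-1)}<0$ since $\alpha>-2\sigma$. In case (1) we have $-\frac{n-2\sigma}{2}<0$. So $s=0$ is allowed in both cases, and $v\in\mathcal{L}_0(\mathbb{R}^n)$.

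The main point of care is Step 1. The three-region kernel estimate must be checked uniformly in $y$, in particular near the threshold $n+2s\to n$, where a logarithm appears. That borderline is avoided by taking $s<0$ strictly and then using monotonicity of $\mathcal{L}_s$ in $s$.
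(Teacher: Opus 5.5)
Your proposal is correct and follows essentially the same route as the paper. Like the paper, you use Fubini/Tonelli to reduce to the kernel $\int_{\mathbb{R}^n}|x-y|^{2\sigma-n}(1+|x|)^{-q}\,dx$, split it into three regions to obtain decay $|y|^{2\sigma-q}$, and conclude with the weighted integrability of $f(\cdot,u)$ from Proposition~\ref{prop u lift}. Your only real deviation is restricting to $q=n+2s<n$ and then invoking the inclusion $\mathcal{L}_{s_1}\subset\mathcal{L}_{s_2}$ for $s_1<s_2$; this neatly avoids the logarithmic case and the $q>n$ case that the paper carries along, and your explicit use of $|v|$ and $|f|$ under Tonelli is slightly more careful than the paper's write-up.
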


\begin{proof}
    By Fubini's theorem, for any $q>2 \sigma$ we have
\begin{equation}\label{eq 4}
    \int_{\mathbb{R}^n} \frac{v(x)}{(1+|x|)^q} d x=C_{n, \sigma} \int_{\mathbb{R}^n}f(y,u(y))\left(\int_{\mathbb{R}^n} \frac{1}{|x-y|^{n-2 \sigma}} \frac{1}{(1+|x|)^q} d x\right) d y .
\end{equation}
If $|y| \leq 1$, then
\begin{equation*}
    \begin{aligned}
\int_{\mathbb{R}^n} \frac{1}{|x-y|^{n-2 \sigma}} \frac{1}{(1+|x|)^q} d x & \leq C\left(\int_{B_3} \frac{1}{|x|^{n-2 \sigma}} d x+\int_{B_2^c} \frac{1}{|x|^{n-2 \sigma+q}} d x\right) \\
& \leq C<\infty .
\end{aligned}
\end{equation*}
If $|y|>1$, then
\begin{equation*}
\int_{\mathbb{R}^n} \frac{1}{|x-y|^{n-2 \sigma}} \frac{1}{(1+|x|)^q} d x=: \sum_{i=1}^3 I_i
\end{equation*}
where
\begin{equation*}
\begin{aligned}
I_1 & =\int_{\left\{|x| \leq \frac{|y|}{2}\right\}} \frac{1}{|x-y|^{n-2 \sigma}} \frac{1}{(1+|x|)^q} d x \leq C \begin{cases}|y|^{2 \sigma-q}, & \text { if } q<n, \\
\log (1+|y|)|y|^{2 \sigma-n}, & \text { if } q=n, \\
|y|^{2 \sigma-n}, & \text { if } q>n,\end{cases} \\
I_2 & =\int_{\left\{\frac{|y|}{2}<|x|<2|y|\right\}} \frac{1}{|x-y|^{n-2 \sigma}} \frac{1}{(1+|x|)^q} d x \leq \frac{C}{|y|^q} \int_{\left\{\frac{|y|}{2}<|x|<2|y|\right\}} \frac{1}{|x-y|^{n-2 \sigma}} d x \\
& \leq C|y|^{2 \sigma-q}
\end{aligned}
\end{equation*}
and
\begin{equation*}
    I_3=\int_{\{|x| \geq 2|y|\}} \frac{1}{|x-y|^{n-2 \sigma}} \frac{1}{(1+|x|)^q} d x \leq C \int_{\{|x| \geq 2|y|\}} \frac{1}{|x|^{n-2 \sigma+q}} d x \leq C|y|^{2 \sigma-q} .
\end{equation*}
Let $u \in \mathcal{L}_\sigma(\mathbb{R}^n)$ be a nonnegative distributional solution of (\ref{eq general}) with $p \geq \frac{n+\alpha}{n-2 \sigma}$. Then for $q>n-\frac{\alpha+2 \sigma}{p-1}$ (note that $n-\frac{\alpha+2 \sigma}{p-1} \geq 2 \sigma$ due to $p \geq \frac{n+\alpha}{n-2 \sigma}$ ), by (\ref{eq 4}) and (\ref{sobolev eq 3}) we get
\begin{equation*}
    \int_{\mathbb{R}^n} \frac{|v(x)|}{(1+|x|)^q} d x \leq C \int_{B_1}|f(y,u(y))|d y+C \int_{B_1^c} \frac{\log (1+|y|)|f(y,u(y))|}{|y|^{q-2 \sigma}} d y<\infty .
\end{equation*}
That is, $v \in \mathcal{L}_s(\mathbb{R}^n)$ for any $s>-\frac{\alpha+2 \sigma}{2(p-1)}$.
Similarly, if $u \in \mathcal{L}_\sigma(\mathbb{R}^n)$ is a nonnegative distributional solution of (\ref{eq general}) in $\mathbb{R}^n$ with $1<p<\frac{n+\alpha}{n-2 \sigma}$, then by using (\ref{sobolev eq 2}) we get that $v \in \mathcal{L}_s(\mathbb{R}^n)$ for any $s>\frac{2 \sigma-n}{2}$.
Since $-\frac{\alpha+2 \sigma}{2(p-1)}<0$ and $\frac{2 \sigma-n}{2}<0$, we derive that $v\in\mathcal{L}_0(\mathbb{R}^n)$ in any case.
\end{proof}

Based on Propositions \ref{prop u lift} and \ref{prop v}, we establish the following integral representation of \eqref{eq general}. The converse is obvious, thus we obtain the equivalence between \eqref{eq general} and its integral equation.

\begin{theorem}\label{thm equivalence}
Let $\sigma \in (0,  n/2)$ and $u \in \mathcal{L}_\sigma(\mathbb{R}^n)$ be a nonnegative distributional solution of
\eqref{eq general}. Suppose that $f$ satisfies Assumption \ref{F}, $f(\cdot, u) \in L_{\textmd{loc}}^1(\mathbb{R}^n)$, and there exists $R_1>0$ such that $f(x, u(x)) \geq 0$ or $f(x, u(x)) \leq 0$ in $\mathbb{R}^n \setminus B_{R_1}$. Then $u$ also satisfies the integral equation
\begin{equation}\label{integral u}
    u(x) = C_{n,\sigma} \int_{\mathbb{R}^n}
    \frac{f(y,u(y))}{|x-y|^{\,n-2\sigma}} \, dy,
    \quad x \in \mathbb{R}^n,
\end{equation}
where $C_{n,\sigma}$ is a positive constant.
\end{theorem}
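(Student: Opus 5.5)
The plan is to show that the difference $w:=u-v$, with $v$ defined by \eqref{def v}, is a $\sigma$-harmonic distribution that is too small at infinity to be anything but a polynomial, and then to kill that polynomial.

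\textbf{Step 1: $w$ is a distributional solution of $(-\Delta)^\sigma w=0$.} By Proposition~\ref{prop u lift}, $f(\cdot,u)$ is integrable against $(1+|y|)^{-(n-2\sigma)}$; in case (2) this uses $n-2\sigma>n-2\sigma-\frac{\alpha+2\sigma}{p-1}$. Hence $v$ is well defined. By Proposition~\ref{prop v}, $v\in\mathcal{L}_0(\mathbb{R}^n)\subset\mathcal{L}_\sigma(\mathbb{R}^n)$. For $\varphi\in C_c^\infty(\mathbb{R}^n)$, Fubini gives
\[
\int v(-\Delta)^\sigma\varphi=\int f(y,u(y))\Big(C_{n,\sigma}\int\frac{(-\Delta)^\sigma\varphi(x)}{|x-y|^{n-2\sigma}}dx\Big)dy=\int f(y,u)\varphi .
\]
The inner integral equals $\varphi(y)$ by the fundamental solution property. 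Fubini is justified by $|(-\Delta)^\sigma\varphi(x)|\le C(1+|x|)^{-n-2\sigma}$ together with the computation in the proof of Proposition~\ref{prop v} with $q=n+2\sigma>n$, which bounds the inner absolute integral by $C(1+|y|)^{2\sigma-n}$, and this is integrable against $|f(y,u)|$. Therefore $w$ satisfies $\int w(-\Delta)^\sigma\varphi=0$ for all test functions, and $w\in\mathcal{L}_\sigma(\mathbb{R}^n)$.

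\textbf{Step 2: Liouville for $\sigma$-harmonic distributions.} A function in $\mathcal{L}_\sigma$ that is $\sigma$-harmonic in the distributional sense is a polynomial; for $\sigma\ge1$ the degree is at most $\lfloor 2\sigma\rfloor$, or less. The argument goes through the Fourier transform: $|\xi|^{2\sigma}\hat w=0$ away from the origin, so $\hat w$ is supported at $0$. One has to be a bit careful since $|\xi|^{2\sigma}$ is not smooth at $0$, and I would cite the standard result used in \cite{Y,ao2020removability}. Then I use the better integrability available for $w$. Proposition~\ref{prop u lift} gives $u\in\mathcal{L}_s$ for some $s<0$ (take $q<n$ close to the thresholds $n-\frac{\alpha+2\sigma}{p-1}$ or $n-\frac{n+\alpha}{p}$, both $<n$), and Proposition~\ref{prop v} gives $v\in\mathcal{L}_s$ for some $s<0$. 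Hence $\int |w|(1+|x|)^{-n+\delta}dx<\infty$ for some $\delta>0$. A nonzero polynomial $P$ has $\int|P|(1+|x|)^{-n+\delta}=\infty$, since already a nonzero constant fails. So $w\equiv0$ a.e., i.e.\ $u=v$ a.e.

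\textbf{Step 3: pointwise identity.} To pass from a.e.\ to the statement \eqref{integral u} at every $x$, I would either interpret the solution as its Lebesgue representative or use continuity of $u$ when assumed in the applications. The main obstacle I anticipate is Step 2, specifically the rigorous Liouville theorem for distributional $\sigma$-harmonic functions in $\mathcal{L}_\sigma$ for noninteger $\sigma>1$. I would handle it by testing with Schwartz functions whose Fourier transforms vanish near $0$, which makes sense because $(-\Delta)^\sigma$ maps them into functions decaying like $(1+|x|)^{-n-2\sigma}$ via Lemma~\ref{lemma remo}-type bounds. This shows $\operatorname{supp}\hat w\subset\{0\}$.
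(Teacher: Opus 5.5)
Your proposal is correct and follows essentially the same route as the paper. Both define $v$ by the Riesz potential, verify via Fubini that it is a distributional solution, and apply a Liouville theorem to $w=u-v$. You add detail the paper omits: the Fourier-support argument, the Fubini justification, and the a.e.-versus-everywhere caveat. You kill the polynomial using $\mathcal{L}_s$ with $s<0$, but the paper's $w\in\mathcal{L}_0$ already suffices, since $\int_{\mathbb{R}^n}(1+|x|)^{-n}\,dx=\infty$.
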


\begin{proof}
    Define $v$ as in \eqref{def v}. By Proposition~\ref{prop v}, we have $v \in \mathcal{L}_0(\mathbb{R}^n)$. Moreover, for any $\varphi \in C_c^{\infty}(\mathbb{R}^n)$,
    \begin{equation*}
    \begin{aligned}
        \int_{\mathbb{R}^n} v(x)(-\Delta)^\sigma \varphi(x)\,dx
        &= \int_{\mathbb{R}^n} \left( C_{n,\sigma} \int_{\mathbb{R}^n}
            \frac{f(y,u(y))}{|x-y|^{n-2\sigma}}\,dy \right)
            (-\Delta)^\sigma \varphi(x)\,dx \\
        &= \int_{\mathbb{R}^n} f(y,u(y)) \left( C_{n,\sigma}
            \int_{\mathbb{R}^n} \frac{(-\Delta)^\sigma \varphi(x)}
            {|x-y|^{n-2\sigma}}\,dx \right) dy \\
        &= \int_{\mathbb{R}^n} f(y,u(y)) \varphi(y)\,dy,
    \end{aligned}
    \end{equation*}
    where the second equality follows from Fubini's theorem. Thus $v$ is a distributional solution of
    \[
        (-\Delta)^\sigma v(x) = f(x,u(x)) \quad \text{in } \mathbb{R}^n .
    \]
Let $w := u-v$. Then $(-\Delta)^\sigma w = 0$ in $\mathbb{R}^n$ in the distributional sense. On the other hand, by Propositions~\ref{prop u lift} and~\ref{prop v}, both $u$ and $v$ belong to $\mathcal{L}_0(\mathbb{R}^n)$, hence $w \in \mathcal{L}_0(\mathbb{R}^n)$. By a Liouville type theorem we deduce that $w \equiv 0$ in $\mathbb{R}^n$. Therefore,
    \[
        u(x) = C_{n,\sigma} \int_{\mathbb{R}^n}
        \frac{f(y,u(y))}{|x-y|^{n-2\sigma}}\,dy
        \quad \text{for } x \in \mathbb{R}^n,
    \]
    which is the desired integral representation.
\end{proof}

Once Theorem \ref{thm equivalence} is established, Theorem \ref{thm negative} follows as a direct consequence.

\begin{proof}[Proof of Theorem \ref{thm negative}]
Let $f(x, t) = K(x)t^{\frac{n+2\sigma}{n-2\sigma}}$. Then the conditions of Theorem \ref{thm equivalence} are satisfied. It follows that $u$ also solves
\begin{equation*}
    u(x) = C_{n,\sigma} \int_{\mathbb{R}^n}
        \frac{K(y)u^\frac{n+2\sigma}{n-2\sigma}(y)}{|x-y|^{n-2\sigma}}\,dy
        \qquad \text{for } x \in \mathbb{R}^n.
\end{equation*}
Since $K(x)\le 0$, we obtain that $u(x)\le 0$. Thus $u \equiv 0$.
\end{proof}

\section{\texorpdfstring{Liouville theorem for radially symmetric $Q$-curvature}{Liouville theorem for radially symmetric $Q$-curvature}}\label{radially}

In this section, we will prove Theorem \ref{thm radial} by employing Theorem \ref{thm equivalence} along with the method of moving planes in integral form.

\begin{proof}[Proof of Theorem \ref{thm radial}] When $\sigma\in(0,1)$, Theorem \ref{thm radial} is a special case of Theorem \ref{thm Lin}. Our main focus here is on proving the case $\sigma \in [1, n/2)$. It follows directly from Theorem \ref{thm equivalence} that $u$ also satisfies the integral equation
\begin{equation*}
    u(x) = \int_{\mathbb{R}^n} \frac{K(|y|)u(y)^{\frac{n+2\sigma}{n-2\sigma}}}{|x-y|^{\,n-2\sigma}} \, dy.
\end{equation*}
Define
\begin{equation*}
v(x) := |x|^{-(n-2\sigma)} u \left(\frac{x}{|x|^2}\right), \qquad x \neq 0.
\end{equation*}

\begin{lemma}\label{lemma 4.1}
    Let $u$ and $K$ be given as in Theorem \ref{thm radial}. If $u$ is a positive solution, then  $v$ is radially symmetric and nonincreasing about the origin.
\end{lemma}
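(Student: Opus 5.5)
The plan is to write $v$ as the solution of an integral equation, run the method of moving planes in integral form on $v$ toward the origin from every direction, and read off radial symmetry and monotonicity.

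\textbf{Step 1: the equation for $v$.} Start from the integral representation
\[
u(x)=\int_{\mathbb{R}^n}\frac{K(|y|)\,u^{p}(y)}{|x-y|^{n-2\sigma}}\,dy,\qquad p=\tfrac{n+2\sigma}{n-2\sigma},
\]
given by Theorem \ref{thm equivalence}. Change variables $y\mapsto y/|y|^2$ and use the identity $\bigl|\tfrac{x}{|x|^2}-\tfrac{y}{|y|^2}\bigr|=\tfrac{|x-y|}{|x||y|}$. The critical exponent makes all powers of $|x|$ and $|y|$ cancel, and one obtains for $x\neq 0$
\[
v(x)=\int_{\mathbb{R}^n}\frac{\tilde K(y)\,v^{p}(y)}{|x-y|^{n-2\sigma}}\,dy,\qquad \tilde K(y):=K(1/|y|).
\]
Since $K$ is radially nondecreasing, $\tilde K$ is radially nonincreasing in $|y|$. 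Because $u$ is continuous and positive at $0$, we have $v(x)\sim u(0)|x|^{2\sigma-n}$ as $|x|\to\infty$. Hence $v$ is bounded away from the origin and lies in $L^{2n/(n-2\sigma)}(\mathbb{R}^n\setminus B_\delta)$ for every $\delta>0$.

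\textbf{Step 2: the moving-plane inequality.} Fix the $x_1$-direction. For $\lambda<0$ set $\Sigma_\lambda=\{x_1<\lambda\}$, $x^\lambda=(2\lambda-x_1,x')$, and $w_\lambda=v-v(\cdot^\lambda)$ on $\Sigma_\lambda$. Splitting the integral over $\Sigma_\lambda$ and its reflection gives
\[
w_\lambda(x)=\int_{\Sigma_\lambda}\Bigl(\frac1{|x-y|^{n-2\sigma}}-\frac1{|x-y^\lambda|^{n-2\sigma}}\Bigr)\bigl[\tilde K(y)v^p(y)-\tilde K(y^\lambda)v^p(y^\lambda)\bigr]dy .
\]
The kernel is positive for $x,y\in\Sigma_\lambda$. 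For $y\in\Sigma_\lambda$ we have $|y^\lambda|<|y|$, so $\tilde K(y^\lambda)\ge\tilde K(y)$. Therefore the bracket is at most $\tilde K(y)\,(v^p(y)-v^p(y^\lambda))\le p\,\tilde K(y)\,v^{p-1}(y)\,w_\lambda^+(y)$. Discarding the negative part and applying the Hardy--Littlewood--Sobolev inequality yields
\[
\|w_\lambda^+\|_{L^{\frac{2n}{n-2\sigma}}(\Sigma_\lambda)}\le C\,\|\tilde K v^{p-1}\|_{L^{\frac{n}{2\sigma}}(\Sigma_\lambda^-)}\,\|w_\lambda^+\|_{L^{\frac{2n}{n-2\sigma}}(\Sigma_\lambda)},
\]
where $\Sigma_\lambda^-=\{w_\lambda>0\}$. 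Since $\Sigma_\lambda$ stays at distance $|\lambda|$ from the origin, $\tilde K\le K(1/|\lambda|)$ there, and both $v$ and $w_\lambda^+$ lie in the relevant Lebesgue spaces. For $\lambda$ very negative, the tail $\int_{\Sigma_\lambda}v^{2n/(n-2\sigma)}$ is small. This forces $w_\lambda^+\equiv0$, which starts the plane.

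\textbf{Step 3: moving to $\lambda=0$ and concluding.} Let $\lambda_0=\sup\{\lambda<0:\ w_\mu\le0 \text{ in } \Sigma_\mu \text{ for all } \mu\le\lambda\}$ and suppose $\lambda_0<0$. Continuity and the integral identity give $w_{\lambda_0}<0$ in $\Sigma_{\lambda_0}$; otherwise $w_{\lambda_0}\equiv0$, which is incompatible with the decay of $v$ and the singularity at $0$. Then for $\lambda$ slightly larger than $\lambda_0$, the set $\Sigma_\lambda^-$ has small measure inside a large ball and the contribution from outside the ball is a small tail. The factor $\|\tilde K v^{p-1}\|_{L^{n/(2\sigma)}(\Sigma_\lambda^-)}$ is then small, which contradicts the definition of $\lambda_0$. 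Hence $\lambda_0=0$ and $v(x)\le v(x^\lambda)$ for all $\lambda<0$ and $x\in\Sigma_\lambda$. Letting $\lambda\to0$ gives $v(-x_1,x')\ge v(x_1,x')$ for $x_1<0$. Running the same argument from the opposite side gives equality, and since the direction was arbitrary, $v$ is radially symmetric. The inequality for all $\lambda<0$ then yields radial nonincreasing.

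The main obstacle is Step 3. The uniform smallness near $\lambda_0$ must be controlled while keeping $\Sigma_\lambda$ away from the singular point $0$ and with $\tilde K$ possibly unbounded near $0$; this is why the plane is kept at $\lambda<0$ and only the limit $\lambda\to0$ is taken.
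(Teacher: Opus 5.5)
Your overall scheme matches the paper's: the integral equation for $v$ with $\tilde K(y)=K(1/|y|)$ radially nonincreasing, moving planes in integral form via HLS, then moving from both sides. Steps 1--2 are fine. But Step 3 has a genuine gap, at exactly the place where the hypothesis that $K$ is nonconstant must enter.

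You exclude the alternative $w_{\lambda_0}\equiv 0$ with $\lambda_0<0$ as ``incompatible with the decay of $v$ and the singularity at $0$.'' Neither reason holds.
\begin{itemize}
\item Nothing guarantees that $v$ is singular at $0$. That happens only if $u$ decays slower than $|x|^{2\sigma-n}$, which is not known. In fact, symmetry about $\{x_1=\lambda_0\}$ would itself force $v$ to be bounded near $0$, since $0$ is the mirror image of $0^{\lambda_0}$, where $v$ is finite.
\item The asymptotics $v(x)\sim u(0)|x|^{2\sigma-n}$ are compatible with symmetry about any hyperplane, because $|x^{\lambda_0}|/|x|\to 1$.
\end{itemize}
A sanity check confirms something is missing: your argument never uses that $K$ is nonconstant. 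Yet for $K\equiv 1$ and $u$ a standard bubble centered at $x_0\neq 0$, the Kelvin transform $v$ is again a bubble, centered at $x_0/(1+|x_0|^2)\neq 0$, so the lemma's conclusion is false. If you choose the $x_1$-axis so that this center has negative first coordinate, the plane stops at some $\lambda_0<0$ with $w_{\lambda_0}\equiv 0$.

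The correct way to close this case, as in the paper (Step 2 and Case 1 of Step 3), is through the integral identity.
\begin{itemize}
\item At $\lambda_0$ the bracket $\tilde K(y)v^p(y)-\tilde K(y^{\lambda_0})v^p(y^{\lambda_0})$ is $\le 0$ on $\Sigma_{\lambda_0}$ and the kernel is positive. So if $w_{\lambda_0}$ vanishes at a single point (in particular if $w_{\lambda_0}\equiv 0$), the bracket vanishes identically.
\item Because $\tilde K\ge 0$, $v>0$, $v(y)\le v(y^{\lambda_0})$ and $\tilde K(y)\le\tilde K(y^{\lambda_0})$, this forces $K(1/|y|)=K(1/|y^{\lambda_0}|)$ for every $y\in\Sigma_{\lambda_0}\setminus\{0^{\lambda_0}\}$.
\item Every $\rho>0$ lies strictly between $|y^{\lambda_0}|$ and $|y|$ for a suitable $y\in\Sigma_{\lambda_0}$. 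Hence the monotone function $K$ is locally constant on $(0,\infty)$, so it is constant, a contradiction.
\end{itemize}
The same argument is needed when you move the plane from the other side and it stops at some $\lambda_0'>0$.

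With this substitution, the rest of your proof goes through and agrees with the paper: the continuation past $\lambda_0$ (small measure inside a ball plus a small tail), the two-sided argument giving symmetry, and radial monotonicity from $v\le v(\cdot^{\lambda})$ for all $\lambda<0$.
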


\begin{proof}
For $\lambda \in \mathbb{R}$, set
\begin{equation*}
x^\lambda := (2\lambda - x_1, x_2, \dots, x_n), \qquad v_\lambda(x) := v(x^\lambda),\qquad
\Sigma_\lambda := \{ x_1 \leq \lambda \} \setminus \{ 0^\lambda \}.
\end{equation*}
Let $\tau=\frac{n+2\sigma}{n-2\sigma}$. A direct computation gives
\begin{equation}\label{eq 4.2}
\begin{aligned}
v(x) - v_\lambda(x)
= \int_{\Sigma_\lambda}
\left( \frac{1}{|x-y|^{\,n-2\sigma}} - \frac{1}{|x^\lambda - y|^{\,n-2\sigma}} \right)
\Bigg(
K(\frac{y}{|y|^2}) v^\tau(y)
- K(\frac{y^\lambda}{|y^\lambda|^2}) v^\tau_\lambda(y)
\Bigg) dy .
\end{aligned}
\end{equation}

\noindent{\it Step 1. $v_\lambda \geq v$ in $\Sigma_\lambda$ for $\lambda$ sufficiently negative.}

Denote
\[
\Sigma_{\lambda}^{-} := \{ x \in \Sigma_{\lambda} \mid v(x) > v_{\lambda}(x) \}.
\]
For $y \in \Sigma_\lambda^-$, since $K$ is radially nondecreasing, we have
\begin{equation}\label{eq 4.3}
\begin{aligned}
    K(\frac{y}{|y|^2}) v^\tau(y)
- K(\frac{y^\lambda}{|y^\lambda|^2}) v_\lambda^\tau(y)
\leq K(\frac{y}{|y|^2}) \bigl(v^\tau(y)- v^\tau_\lambda(y)\bigr),
\quad  \forall\, y \in \Sigma_\lambda^-.
\end{aligned}
\end{equation}
For $y \in \Sigma_\lambda \setminus \Sigma_\lambda^-$, we have
\begin{equation}\label{eq 4.4}
K(\frac{y}{|y|^2}) v^\tau(y)
- K(\frac{y^\lambda}{|y^\lambda|^2}) v^\tau_\lambda(y) \leq K(\frac{y}{|y|^2}) \bigl(v^\tau(y)- v^\tau_\lambda(y)\bigr)
\leq 0, \quad  \forall\, y \in \Sigma_\lambda \setminus \Sigma_\lambda^- .
\end{equation}
Combining \eqref{eq 4.3} and \eqref{eq 4.4}, we deduce that
\begin{equation*}
\begin{aligned}
v(x) - v_\lambda(x)
&\leq \int_{\Sigma_\lambda^-} \left(\frac{1}{|x-y|^{n-2\sigma}} - \frac{1}{|x^\lambda - y|^{n-2\sigma}}\right)
K(\frac{y}{|y|^2}) \bigl(v^\tau(y)- v^\tau_\lambda(y)\bigr)\, dy\\
&\overset{MVT}{\leq} \tau \int_{\Sigma_\lambda^-} \left(\frac{1}{|x-y|^{n-2\sigma}} - \frac{1}{|x^\lambda - y|^{n-2\sigma}}\right)
K(\frac{y}{|y|^2}) v^{\tau-1} (y)\bigl(v(y) - v_\lambda(y)\bigr)\, dy\\
&\leq C \int_{\Sigma_\lambda^-} \frac{1}{|x-y|^{n-2\sigma}} v^{\tau-1}(y) \bigl(v(y) - v_\lambda(y)\bigr)\, dy,
\end{aligned}
\end{equation*}
where $C=C(n,\sigma,\|K\|_{L^\infty(B_1)})$.
By the Hardy-Littlewood-Sobolev inequality and H\"older inequality,
\begin{equation*}
\| v - v_\lambda \|_{L^q(\Sigma_\lambda^-)}
\leq C \left(\int_{\Sigma_\lambda^-} v^{\tau+1}\, dy \right)^{\frac{2\sigma}{n}}
\| v - v_\lambda \|_{L^q(\Sigma_\lambda^-)}, \qquad q > \frac{n}{n-2\sigma}.
\end{equation*}
Choosing $\lambda$ sufficiently negative such that
\begin{equation*}
C \left(\int_{\Sigma_\lambda^-} v^{\tau+1}\, dy \right)^{\frac{2\sigma}{n}} < \frac{1}{2},
\end{equation*}
we obtain
\begin{equation*}
\| v - v_\lambda \|_{L^q(\Sigma_\lambda^-)} = 0,
\end{equation*}
which implies $v_\lambda \geq v$ in $\Sigma_\lambda$.

Define
\begin{equation*}
\lambda_{0} = \sup \{ \lambda < 0 \mid v_{\mu} \geq v \text{ in } \Sigma_{\mu} \text{ for any } \mu < \lambda \}.
\end{equation*}

\noindent{\it Step 2. If $\lambda_{0} < 0$, then $v \equiv v_{\lambda_{0}}$ in $\Sigma_{\lambda_{0}}$.}

By definition, we have
\begin{equation*}
    v\le v_{\lambda_0} \quad\text{ in } \Sigma_{\lambda_0}.
\end{equation*}
We proceed by contradiction. Assume that $v \not\equiv v_{\lambda_{0}}$. We will show that under this assumption the plane can actually be moved further to the right.

Suppose $v \not\equiv v_{\lambda_{0}}$. Similar to \eqref{eq 4.4}, we have
\begin{equation*}
K(\frac{y}{|y|^{2}}) v^{\tau}(y)
- K(\frac{y^{\lambda_0}}{|y^{\lambda_0}|^2}) v^{\tau}_{\lambda_{0}}(y) \leq 0,
\quad y \in \Sigma_{\lambda_{0}}.
\end{equation*}
If $v(x_{0}) = v_{\lambda_{0}}(x_{0})$ for some $x_{0} \in \Sigma_{\lambda_{0}}$, then from \eqref{eq 4.2} we infer
\begin{equation*}
K(\frac{y}{|y|^{2}}) v^{\tau}(y)
- K(\frac{y^{\lambda_0}}{|y^{\lambda_0}|^2}) v^{\tau}_{\lambda_{0}}(y) \equiv 0,
\quad y \in \Sigma_{\lambda_{0}},
\end{equation*}
which implies $v \equiv v_{\lambda_{0}}$ in $\Sigma_{\lambda_{0}}$, a contradiction. Hence
\begin{equation}\label{eq 4.1}
v < v_{\lambda_{0}} \quad \text{in } \Sigma_{\lambda_{0}}.
\end{equation}
Once the strict inequality holds, one can extend the comparison beyond $\lambda_0$. More precisely, if \eqref{eq 4.1} holds, then there exists a small $\delta > 0$ such that $\lambda_{0} + \delta < 0$ and
\begin{equation}\label{eq 4.5}
v(x) \leq v_{\lambda}(x) \quad \text{in } \Sigma_{\lambda}, \qquad  \forall\, \lambda \in [\lambda_{0}, \lambda_{0}+\delta].
\end{equation}
Indeed, by similar calculations as in Step~1, we obtain
\begin{equation*}
\| v - v_{\lambda} \|_{L^{q}(\Sigma_{\lambda}^{-})}
\leq C \left( \int_{\Sigma_{\lambda}^{-}} v^{\tau+1} \, dy \right)^{\frac{2\sigma}{n}}
\| v - v_{\lambda} \|_{L^{q}(\Sigma_{\lambda}^{-})},
\end{equation*}
where $q > \frac{n}{n-2\sigma}$ and
\[
\Sigma_{\lambda}^{-} := \{ x \in \Sigma_{\lambda} \mid v(x) > v_{\lambda}(x) \}.
\]
It suffices to prove that $|\Sigma_{\lambda}^{-}| \to 0$ as $\delta \to 0$. Define
\begin{equation*}
E_{\varepsilon} := \{ x \in \Sigma_{\lambda_{0}} \mid v_{\lambda_{0}} - v > \varepsilon \},
\quad F_{\varepsilon} := \Sigma_{\lambda_{0}} \setminus E_{\varepsilon},
\quad H_{\lambda} := \Sigma_{\lambda} \setminus \Sigma_{\lambda_{0}}.
\end{equation*}
Clearly,
\begin{equation*}
\Sigma_{\lambda}^{-} \subset (\Sigma_{\lambda}^{-} \cap E_{\varepsilon}) \cup F_{\varepsilon}\cup H_{\lambda}.
\end{equation*}
We note that $\lim_{\lambda \to \lambda_{0}} |H_{\lambda}| = 0$, and since $v_{\lambda_{0}} > v$ in $\Sigma_{\lambda_{0}}$, we also have $\lim_{\varepsilon \to 0} |F_{\varepsilon}| = 0$. It remains to prove
\begin{equation*}
\lim_{\lambda \to \lambda_{0}} |\Sigma_{\lambda}^{-} \cap E_{\varepsilon}| = 0.
\end{equation*}
Indeed,
\begin{equation*}
v - v_{\lambda} = v - v_{\lambda_{0}} + v_{\lambda_{0}} - v_{\lambda} > 0
\quad \text{in } \Sigma_{\lambda}^{-} \cap E_{\varepsilon},
\end{equation*}
which implies
\[
v_{\lambda_{0}} - v_{\lambda} > v_{\lambda_{0}} - v \geq \varepsilon
\quad \text{in } \Sigma_{\lambda}^{-} \cap E_{\varepsilon}.
\]
Therefore
\begin{equation*}
E_{\varepsilon} \cap \Sigma_{\lambda}^{-}
\subset \{ x \in \Sigma_{\lambda} \mid v_{\lambda_{0}} - v_{\lambda} \geq \varepsilon \}
=: G_{\varepsilon}.
\end{equation*}
By Chebyshev's inequality,
\begin{equation*}
|G_{\varepsilon}| \leq \frac{1}{\varepsilon^{\tau+1}}
\int_{\Sigma_{\lambda}} |v_{\lambda_{0}} - v_{\lambda}|^{\tau+1}.
\end{equation*}
For fixed $\varepsilon > 0$, as $\lambda \to \lambda_{0}$ we have $|G_{\varepsilon}| \to 0$. Thus we can fix a small $\varepsilon>0$ and then choose $\delta > 0$ sufficiently small such that
\begin{equation*}
C \left( \int_{\Sigma_{\lambda}^{-}} v^{\tau+1} \, dy \right)^{\frac{2\sigma}{n}} < \frac{1}{2},
\qquad  \forall\, \lambda \in [\lambda_{0}, \lambda_{0}+\delta].
\end{equation*}
Consequently, $\Sigma_{\lambda}^{-} = \emptyset$, and \eqref{eq 4.5} holds. However, \eqref{eq 4.5} contradicts the definition of $\lambda_{0}$. Hence \eqref{eq 4.1} cannot hold, and we conclude that
\[
v \equiv v_{\lambda_{0}} \quad \text{in } \Sigma_{\lambda_{0}}, \qquad \text{if } \lambda_{0} < 0.
\]

\noindent{\it Step 3. Classification of $u$.}

{Case 1.} If $\lambda_{0}<0$, then $v \equiv v_{\lambda_{0}}$ in $\Sigma_{\lambda_{0}}$. In particular, $v$ is continuous at $0$, which means $u$ decays at infinity at the rate $|x|^{-(n-2\sigma)}$. By a similar calculation as in Step~2, we have
\begin{equation*}
K(\frac{y}{|y|^{2}})
\equiv
 K(\frac{y^{\lambda_0}}{|y^{\lambda_0}|^2}),
\quad y \in \Sigma_{\lambda_{0}}.
\end{equation*}
Thus $K$ must be constant, contradicting the assumption.

{Case 2.} If $\lambda_{0}=0$, then we may move the plane from the other side and obtain the corresponding $\lambda_{0}'$.

\begin{enumerate}

  \item If $\lambda_{0}'>0$, the argument is analogous to Case~1, leading to a contradiction.
  \item If $\lambda_{0}'=0$, then $v(x) \equiv v(x^{0})$ in $\Sigma_{0}$, which shows that $v(x)$ is symmetric with respect to $\{x_{1}=0\}$.
  Since the $x_{1}$-axis can be chosen in any direction, $v(x)$ must be radially symmetric and radially decreasing about the origin.

\end{enumerate}
This completes the proof of Lemma \ref{lemma 4.1}.
\end{proof}

Since equation \eqref{sobolev eq 7} is not translation-invariant, we cannot apply the Kelvin transform centered at an arbitrary point to classify the solutions. To overcome this difficulty, we make use of the following result from Bianchi \cite{gabriele_nonexistence_1997}, and then select an appropriate point to perform the Kelvin transform.

\begin{lemma}[\cite{gabriele_nonexistence_1997}]\label{lemma bianchi}
Let $u$ be a function defined on $\mathbb{R}^n$. Suppose that $u$ is radially symmetric about some point $P$ and also its Kelvin transform centered in a point different from $P$ is radially symmetric about some point $Q$.
Then either $u$ is constant or
\begin{equation*}
u(x) = \frac{k}{\left( |x-x_{0}|^{2}+h \right)^{\frac{n-2\sigma}{2}}}
\end{equation*}
for some constants $k,h >0$ and some point $x_{0} \in \mathbb{R}^{n}$.
\end{lemma}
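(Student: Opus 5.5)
The lemma concerns a function $u$ that is radially symmetric about a point $P$, and whose Kelvin transform centered at a point $O\neq P$ is radially symmetric about a point $Q$. The plan is to show that these two symmetries together force $u$ to be invariant under a large group of isometries, so large that $u$ depends on a single Möbius-invariant quantity. The explicit bubble form then follows by comparison with the standard profile.

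\emph{Normalization.} Translate so that $O=0$ and rotate so that $P=p e_1$ with $p\neq0$. Write the Kelvin transform as $\tilde u(x)=|x|^{-(n-2\sigma)}u(x/|x|^2)$. Since $u$ is invariant under every rotation fixing the $e_1$-axis, and these rotations fix $0$, they commute with the inversion $x\mapsto x/|x|^2$. Hence $\tilde u$ is invariant under the same rotations. A function radial about $Q$ is invariant under all rotations about $Q$. If $Q$ were off the $e_1$-axis, $\tilde u$ would be invariant under rotations about two distinct centers and therefore constant along all translations, so $\tilde u$, and hence $|x|^{n-2\sigma}u(x)$, would be constant. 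I will treat this degenerate case separately, since it reduces to the constant alternative or gives a contradiction with radiality about $P$. Otherwise $Q=qe_1$.

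\emph{Key step.} Let $R_P$ denote the rotations about $P$, and let $S_Q=I\circ R_Q\circ I$ with $I$ the inversion. Then $u$ is invariant under the group generated by $R_P$ together with the weighted action of $S_Q$. That action is a conformal map with its conformal-factor weight $|J|^{(n-2\sigma)/(2n)}$; it is the natural action because the Kelvin transform is exactly the weighted pullback under $I$. This generated group is a copy of $SO(n+1)$ acting on $\mathbb{S}^n$ after stereographic projection, conjugated by some Möbius map, unless $q$ takes the special value making $S_Q$ coincide with rotations about $P$. In that special case we return to the constant alternative. Since $SO(n+1)$ acts transitively on $\mathbb{S}^n$, the weighted function $u\cdot(\text{conformal factor})^{-(n-2\sigma)/2}$ is constant on the sphere. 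Pulling back, $u$ is a constant multiple of the standard bubble transported by a Möbius map, which is exactly $k(|x-x_0|^2+h)^{-(n-2\sigma)/2}$.

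\emph{Concrete alternative.} If the group-theoretic argument proves awkward, I would instead use the one-variable functions $u(x)=g(|x-P|)$ and $\tilde u(x)=\tilde g(|x-Q|)$. Equating them along the $e_1$-axis and on spheres centered at $P$ gives a functional equation showing that $|x|^{-(n-2\sigma)}g(|x/|x|^2-P|)$ depends only on $|x-Q|$. Level sets of $|x/|x|^2-P|$ are spheres, and their images under inversion are spheres not centered at $Q$ unless a specific relation holds. Matching the two families of spheres forces $g(r)^{-2/(n-2\sigma)}$ to be an affine function of $r^2$, which is the bubble.

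\emph{Main obstacle.} The hardest step is justifying that the generated group is the full conformal copy of $SO(n+1)$, that is, handling the weight factor correctly and excluding the degenerate configurations. I would attack this first by doing explicit computations in the $(x_1,|x'|)$ half-plane.
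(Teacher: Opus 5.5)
The gap is in your Key step, and the missing ingredient is positivity and continuity of $u$, which your argument never uses. Normalize as you do ($O=0$, $P=pe_1$, $Q=qe_1$) and set $P^*:=P/|P|^2$. In the upper half-space model $\mathbb{R}^n\times(0,\infty)$ of $\mathbb{H}^{n+1}$, the rotations about $P$ and the maps $I\circ R_Q\circ I$ are the rotation groups about the geodesics with endpoints $\{P,\infty\}$ and $\{0,Q/|Q|^2\}$. Note that $I\circ R_Q\circ I$ fixes $0$ and $Q/|Q|^2$, so it never coincides with the rotations about $P$.

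These two groups generate a compact conjugate of $O(n+1)$, a point stabilizer transitive on $\mathbb{S}^n$, only when the two geodesics meet. That means $P$ lies strictly between $0$ and $Q/|Q|^2$, i.e.\ $0<pq<1$, and there your argument is fine. In the other cases it fails:
\begin{itemize}
\item For $q=0$ the group contains translations, and $u$ is indeed constant.
\item For $pq=1$ the geodesics are asymptotic; for $pq>1$ or $pq<0$ they are ultraparallel. The group is then noncompact and not transitive. For instance, with $P=e_1$, $Q=2e_1$ it preserves the sphere $|x-e_1|=1/\sqrt2$, which $I$ maps onto $|x-2e_1|=\sqrt2$.
\end{itemize}
So the degenerate set is a whole range of $q$, not one special value, and nothing there returns you to the constant alternative.

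In these cases there really are functions with both symmetries that are neither constant nor bubbles:
\begin{itemize}
\item $u(x)=|x-P|^{-(n-2\sigma)}$, with any value at $P$. Its Kelvin transform is $|P|^{-(n-2\sigma)}|x-P^*|^{-(n-2\sigma)}$.
\item $u(x)=\bigl|\,|x-e_1|^2-\tfrac12\bigr|^{-\frac{n-2\sigma}{2}}$, with $u=0$ on that sphere. Its Kelvin transform is $2^{\frac{n-2\sigma}{2}}\bigl|\,|x-2e_1|^2-2\bigr|^{-\frac{n-2\sigma}{2}}$.
\end{itemize}
Hence the statement is only true under a hypothesis such as positivity and continuity of $u$ on $\mathbb{R}^n$.

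Your ``concrete alternative'' is the right way to close the gap, but as written it stops at the same point. Use $|x|\,\bigl|x/|x|^2-P\bigr|=|P|\,|x-P^*|$ and eliminate $x_1$. This gives $|x-Q|^2=(1-\lambda)|x|^2+\lambda|x-P^*|^2+\mu$, with $\lambda=pq$ and $\mu=q^2-q/p$. Put $a=|x|$ and $t=|x-P^*|^2/|x|^2$. The hypothesis becomes $g(|P|\sqrt t)=a^{n-2\sigma}h\bigl(\sqrt{(1-\lambda+\lambda t)a^2+\mu}\bigr)$. For each $t>0$, $a$ ranges over a nondegenerate interval; this is where $n\ge2$ is used. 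It follows that $g(r)=c_\pm\bigl|1-\lambda+\lambda r^2/p^2\bigr|^{-\frac{n-2\sigma}{2}}$ on each interval where the affine factor is nonzero, and $g(r_0)=0$ at any zero $r_0>0$ of that factor.

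``Affine in $r^2$'' is not yet ``the bubble''; you still have to show $0\le\lambda<1$.
\begin{itemize}
\item If $\lambda>1$ or $\lambda<0$, the factor vanishes at some $r_0>0$, which contradicts $u>0$.
\item If $\lambda=1$, then $g(r)=c\,r^{-(n-2\sigma)}$, which is incompatible with $u$ being positive and continuous at $P$.
\end{itemize}
With this sign analysis, plus your treatment of $Q$ off the axis, $\lambda=0$ gives the constant case. For $0<\lambda<1$ you get $k(|x-P|^2+h)^{-\frac{n-2\sigma}{2}}$ with $h=p^2(1-\lambda)/\lambda>0$, and the proof is complete. The group-theoretic route can be repaired too, but only by adding exactly this case distinction and this use of positivity and continuity.
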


\begin{lemma}\label{Lemma 4.3}
    Let $u$ and $K$ be given as in Theorem \ref{thm radial}. Assume $u$ is a positive solution. If $v$ is radially symmetric and decreasing about the origin, then $v$ satisfies the assumptions of Lemma \ref{lemma bianchi}.
\end{lemma}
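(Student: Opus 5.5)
We need to show: $v$ (equivalently $u$) is radially symmetric about some point $P$, and a Kelvin transform of it centred at a point different from $P$ is radially symmetric about some point $Q$. The plan is to take $P=0$ for $v$ and to produce the second symmetry by running the moving plane method on a Kelvin transform of $v$ centred at a point $e\neq 0$, where the equation turns out to have a favourable monotone weight.

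\textbf{Step 1: first symmetry.} Since $v(x)=|x|^{-(n-2\sigma)}u(x/|x|^2)$ is radially symmetric about the origin, $u$ is radially symmetric about $0$ as well, because inversion preserves spheres centred at $0$. So $v$ already has the symmetry required of $u$ in Lemma~\ref{lemma bianchi}, with $P=0$. Moreover, $v$ solves the integral equation
\[
v(x)=\int_{\mathbb{R}^n}\frac{K(1/|y|)\,v^\tau(y)}{|x-y|^{n-2\sigma}}\,dy,
\]
whose weight $\tilde K(y):=K(1/|y|)$ is radially \emph{nonincreasing} and bounded by $K(0)$ (it is $\le \sup K$, which is finite near infinity only if $K(\infty)<\infty$). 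I will handle the possible unboundedness of $K$ near $0$ of $\tilde K$ by local integrability via Proposition~\ref{prop u lift}.

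\textbf{Step 2: Kelvin transform at $e\neq0$.} Fix a point $e\neq 0$ in the region where $v$ is positive and finite, and set
\[
w(x)=|x-e|^{-(n-2\sigma)}\,v\!\left(e+\frac{x-e}{|x-e|^2}\right).
\]
Then $w$ satisfies
\[
w(x)=\int\frac{\tilde K(e+\frac{y-e}{|y-e|^2})\,w^\tau(y)}{|x-y|^{n-2\sigma}}\,dy .
\]
Two properties should follow:
\begin{enumerate}[(a)]
\item $w$ decays like $|x|^{-(n-2\sigma)}$ at infinity, since $v$ is continuous at $e$.
\item $w$ has at most an isolated singularity at the image $e^*:=e-e/|e|^2$ of $0$, coming from the possibly singular behaviour of $v$ at the origin.
\end{enumerate}

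\textbf{Step 3: moving planes for $w$.} The goal is to show $w$ is radially symmetric about some point $Q$. I will apply moving planes in integral form, exactly as in the proof of Lemma~\ref{lemma 4.1}, using the Hardy--Littlewood--Sobolev inequality on $\Sigma_\lambda^-$. Planes are moved in directions $\nu$ perpendicular to $e$ first, since reflection across such planes keeps $e$ and $e^*$ on the invariant line, and the weight $y\mapsto \tilde K(e+\frac{y-e}{|y-e|^2})$ is symmetric under these reflections. This should give symmetry of $w$ about the line through $e$ and $e^*$.

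For the direction $\nu=e/|e|$, I need the key monotonicity: for $y\in\Sigma_\lambda$ (with $\Sigma_\lambda$ containing the far side of the plane from $e^*$), the reflected point's preimage under the inversion is farther from the origin. Hence the weight is smaller there, since $\tilde K$ is radially nonincreasing, which reproduces inequalities \eqref{eq 4.3}--\eqref{eq 4.4}.

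Moving the plane from $-\infty$, it will stop at some $\lambda_1$. Either $w\equiv w_{\lambda_1}$ there, or the plane reaches the singular point $e^*$. In the latter case, moving from the opposite side and using that the singularity is isolated and $w$ is unbounded or at least positive near it (narrow region argument) should force the two critical positions to coincide. This gives symmetry of $w$ in every direction about a common point $Q$.

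\textbf{Main obstacle.} The hardest part will be verifying the monotonicity of the transformed weight across reflections in the $e$-direction. In Lemma~\ref{lemma 4.1} this was automatic because the reflection was through the inversion centre; here $|e+\frac{y-e}{|y-e|^2}|$ versus its reflected counterpart must be compared by an explicit computation. I expect this to work only for planes between appropriate positions, so a careful choice of $e$ (for instance, $|e|$ small, using that $v$ is radially decreasing) will likely be needed. Handling the singularity at $e^*$ in the integral moving plane scheme, by excising a small ball whose measure contributes negligibly to the HLS estimate, is the second delicate point.
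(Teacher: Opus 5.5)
The skeleton you use matches the paper: Kelvin-transform $v$ at a point $e\neq 0$, then run the integral moving-plane/HLS argument. As laid out, however, the plan does not yield radial symmetry of $w$.

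\textbf{The directions you use cannot suffice.} $w$ is automatically axisymmetric about the line through $0$ and $e$, inherited from the radial symmetry of $v$, so no moving planes are needed perpendicular to $e$. A function that is axisymmetric about a line and even with respect to a hyperplane orthogonal to that line need not be radial. What is missing is one reflection symmetry in a direction $\theta$ \emph{oblique} to the axis. The paper takes $\theta$ at an angle to the axis that is an irrational multiple of $\pi$, so that this reflection together with the axial rotations gives full rotational symmetry about a point.

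\textbf{The plane cannot start at $-\infty$, nor from the opposite side.} Write the weight as $W(z)=K\big(|z-e|/(|e|\,|z-e^*|)\big)$ with $e^*=e-e/|e|^2$. The inequality $W(z^\lambda)\ge W(z)$ on $\Sigma_\lambda$ that \eqref{eq 4.3}--\eqref{eq 4.4} require holds only when $e\in\Sigma_\lambda$ and $e^*\notin\Sigma_\lambda$, i.e.\ for $\lambda\in[e\cdot\theta,e^*\cdot\theta)$. You also have the direction reversed: the preimage of $z^\lambda$ is \emph{closer} to the origin, so the weight there is \emph{larger}. The paper starts exactly at $\lambda=e\cdot\theta$. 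There $|z-e|=|z^\lambda-e|$ and $|z-e^*|>|z^\lambda-e^*|$, so $w\le w_\lambda$ follows directly from the radial monotonicity of $v$, not from decay at infinity.

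\textbf{Most importantly, nothing makes the plane stop strictly before $e^*\cdot\theta$.} If the plane runs to the end of the admissible window you obtain no symmetry. A narrow-region argument near the isolated singularity does not address this, and taking $|e|$ small is not the right choice. The paper's Step~2 supplies the mechanism, for $\sigma\ge 1$:
\begin{itemize}
\item From the integral representation, $u$ is radial and superharmonic. Unless $u$ is constant (which forces $K\equiv 0$), $u'(r_0)<0$ for some $r_0$.
\item Equivalently, $-(n-2\sigma)v(r_0^{-1})-r_0^{-1}v'(r_0^{-1})<0$.
\item Choosing the Kelvin centre with $|e|=r_0^{-1}$ makes $\lim |z-e|^{n-2\sigma+2}\,\partial_\theta w<0$ along planes $z\cdot\theta=\lambda$ with $\lambda$ close to $e^*\cdot\theta$.
\item This rules out $w\le w_\lambda$ for such $\lambda$, so the plane stops strictly inside the window.
\end{itemize}
Thus the choice of $e$ is dictated by a radius where $u$ strictly decreases.
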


\begin{proof}
We will adapt some ideas from Bianchi \cite{gabriele_nonexistence_1997}. By Lemma \ref{lemma bianchi}, we only need to verify that the Kelvin transform of $v$, when centered at a point different from the origin, is itself radially symmetric about some point. Let $T=(0,\cdots,0,T_n)$ with $T_n<0$, $H=T-\frac{T}{|T|^2}$. Define
\begin{equation*}
w(x) = \begin{cases}
   \frac{1}{|x-T|^{\,n-2\sigma}}
v\!\left( \frac{x-T}{|x-T|^{2}} + T \right),
\qquad &x \neq T,\\
u(0),\qquad &x=T.
\end{cases}
\end{equation*}
Then $w$ is axisymmetric with respect to the $x_{n}$-axis.
To prove that $w$ is {radially symmetric}, it suffices to show that $w$ is also symmetric with respect to some hyperplane whose normal $\theta$ forms an angle with the $x_{n}$-axis that is an irrational multiple of $\pi$.
We will use the moving plane method once again.

Let
\[
{z}= T + \frac{x-T}{|x-T|^{2}},
\]
Then
\begin{equation*}
|x| = \frac{|T||z-H|}{|{z} - T|}.
\end{equation*}
Since $v(x) = v(|x|)$, we obtain
\begin{equation*}
w(z) = \begin{cases}
   \frac{1}{|z-T|^{\,n-2\sigma}}
v\!\left( \frac{|T||z-H|}{|z-T|} \right),
\qquad &z \neq T,H.\\
u(0),\qquad &z=T.
\end{cases}
\end{equation*}
Without loss of generality, assume $\theta$ points towards $\{z_n > 0\}$. Denote $z^\lambda := z + 2(\lambda - z\cdot \theta)\theta$ as the reflection of $z$
with respect to the hyperplane $z\cdot \theta = \lambda$, and $w_\lambda(z)=w(z^\lambda)$. Let
\[
\Sigma_\lambda = \{ z : z\cdot \theta \leq \lambda \}\backslash\{H^\lambda\}.
\]
We will move the plane $z\cdot \theta = \lambda$ from $\lambda=T\cdot\theta$ towards $\lambda=H\cdot\theta$.
\begin{figure}[htbp]
    \centering
    \includegraphics[width=0.5\linewidth]{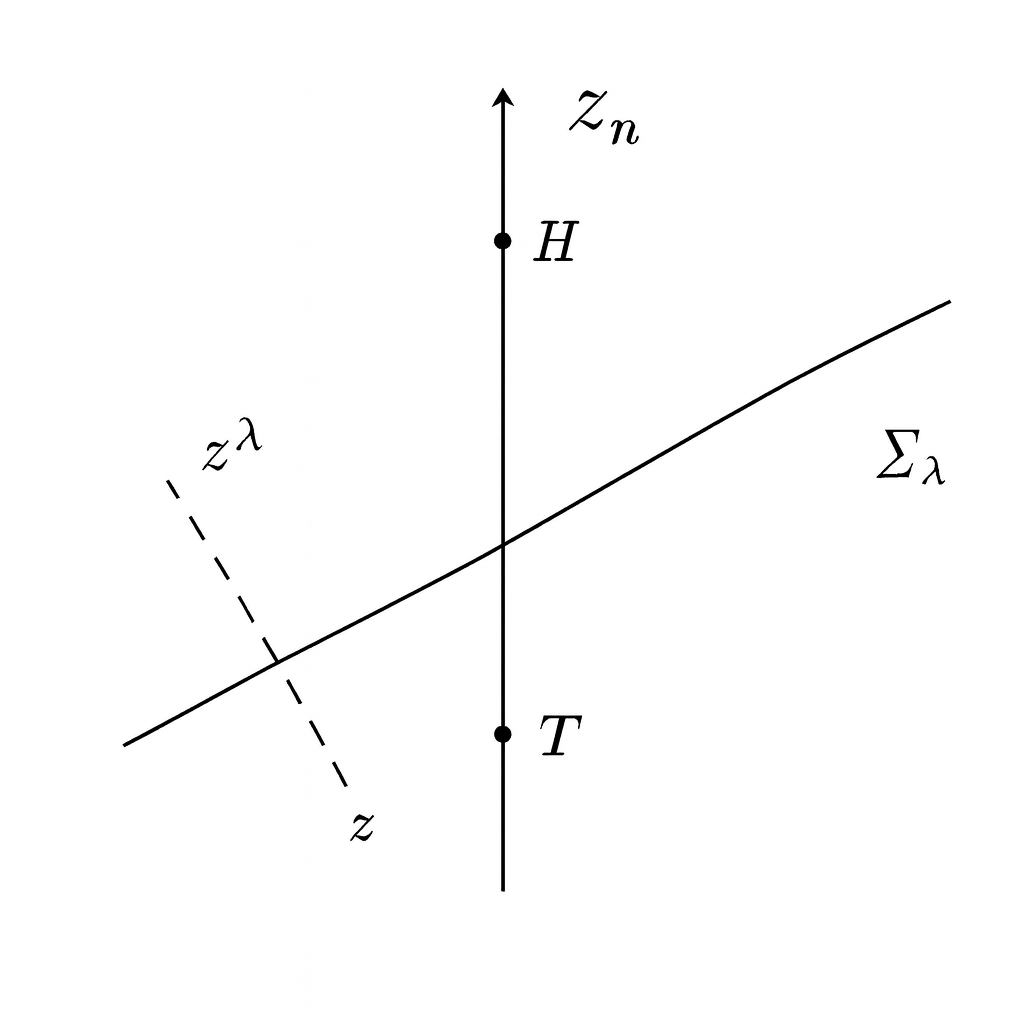}
    \caption{$T$, $H$ and $\Sigma_\lambda$.}
    \label{fig:placeholder}
\end{figure}

\noindent{\it Step 1. $w(z) \leq w(z^\lambda)$ in $\Sigma_\lambda$ when $\lambda=T\cdot\theta$.}

For $\lambda=T\cdot\theta$ and $z\in\Sigma_\lambda$, we have
\begin{equation*}
    |z-T|=|z^\lambda-T|,\qquad |z-H|>|z^\lambda-H|.
\end{equation*}
Since $v(|x|)$ is nonincreasing about $|x|$, we have
\begin{equation*}
    w(z) =   \frac{1}{|z-T|^{\,n-2\sigma}}
v\!\left( \frac{|T||z-H|}{|z-T|} \right)\le \frac{1}{|z^\lambda-T|^{\,n-2\sigma}}
v\!\left( \frac{|T||z^\lambda-H|}{|z^\lambda-T|} \right) =w(z^\lambda).
\end{equation*}
Define
\begin{equation*}
\lambda_{0} := \sup \{ T\cdot\theta\le\lambda < H\cdot\theta    \mid w(x) \leq w(x^{\mu}) \text{ in } \Sigma_{\mu} \text{ for all } T\cdot\theta\le \mu < \lambda \}.
\end{equation*}

\noindent{\it Step 2. $T\cdot\theta\le\lambda_{0}< H\cdot\theta$.}

If $\sigma = 1$, we have
\begin{equation*}
-\Delta u(x) = K(x)u^\frac{n+2\sigma}{n-2\sigma}(x) \ge 0.
\end{equation*}
If $\sigma > 1$, it follows from the dominated convergence theorem that
\begin{equation*}
-\Delta u(x) = (n - 2\sigma)(2\sigma - 2)
\int_{\mathbb{R}^n} \frac{K(y)u^\frac{n+2\sigma}{n-2\sigma}(y)}{|x - y|^{\,n - 2\sigma + 2}} \, dy \ge 0.
\end{equation*}
Since $u(x) = u(|x|)$ is radially symmetric, we obtain
\begin{equation*}
-\frac{1}{r^{n-1}} \left( r^{n-1} u'(r) \right)' \ge 0.
\end{equation*}
Let $h(r) = r^{n-1} u'(r)$. Then $h(0) = 0$ and $h'(r) \le 0$ for all $r \ge 0$.
Consequently, either $h(r) \equiv 0$ for all $r \ge 0$, or there exists some $r_0 > 0$ such that $h(r_0) < 0$.
If $h(r) \equiv 0$, then $u \equiv C$, which means $K\equiv0$ and contradicts the assumption. Thus $h(r_0) < 0$ for some $r_0 > 0$. In this case, $u'(r_0) < 0$, which implies
\begin{equation*}
\left. \frac{d}{dr} \left( \frac{1}{r^{n-2\sigma}} v(1/r) \right) \right|_{r = r_0} < 0.
\end{equation*}
Hence,
\begin{equation*}
-(n - 2\sigma) v(r_0^{-1}) - r_0^{-1}\, v'(r_0^{-1}) < 0.
\end{equation*}
On the other hand, one can verify that
\begin{equation*}
\lim_{\substack{z \cdot \theta = \lambda \\ |z| \to \infty}}
|z - T|^{\,n - 2\sigma + 2} \frac{\partial w}{\partial \theta}
= -(n - 2\sigma)(\lambda - T \cdot \theta) v(|T|)
- (H \cdot \theta - T \cdot \theta)\, |T|\, v'(|T|),
\end{equation*}
Choose $T$ such that $|T|=r_0^{-1}$, then
\begin{equation*}
-(n - 2\sigma) v(|T|) - |T|\, v'(|T|) < 0.
\end{equation*}
Then, for $\lambda < H \cdot \theta$ and $\lambda$ sufficiently close to $H \cdot \theta$, we have
\begin{equation*}
\lim_{\substack{z \cdot \theta = \lambda \\ |z| \to \infty}}
|z - T|^{\,n - 2\sigma + 2}  \frac{\partial w}{\partial \theta} < 0.
\end{equation*}
This means
\begin{equation*}
\frac{\partial w}{\partial \theta}(z) < 0
\quad \text{for some } z \in \{ z \cdot \theta = \lambda \}.
\end{equation*}
Therefore,
\begin{equation*}
w(z^\lambda) < w(z)
\quad \text{for some } z \in \{ z \cdot \theta < \lambda \}.
\end{equation*}
It follows that $\lambda_0$ cannot reach $H \cdot \theta$; hence $\lambda_0 < H \cdot \theta$.

\vspace{0.1cm}
\noindent{\it Step 3. $w(z) \equiv w(z^{\lambda_{0}})$ in ${\Sigma_{\lambda_{0}}}$.}

We argue by contradiction. Assume $w \not\equiv w_{\lambda_{0}}$.
Similar to the proof of Lemma \ref{lemma 4.1}, we have
\begin{equation*}
w < w_{\lambda_{0}} \quad \text{in } \Sigma_{\lambda_{0}}.
\end{equation*}
If this strict inequality holds, then we can move the plane further to the right:
there exists a small $\delta > 0$ such that $\lambda_{0} + \delta < H \cdot \theta$ and
\begin{equation}\label{eq 4.8}
w(x) \leq w_{\lambda}(x) \quad \text{in } {\Sigma_{\lambda}},
\qquad  \forall\, \lambda \in [\lambda_{0}, \lambda_{0}+\delta].
\end{equation}
Next we verify \eqref{eq 4.8} to get the contradiction. After some calculations, we arrive at
\begin{equation}\label{eq 3.3}
\begin{aligned}
&w(x) - w(x^\lambda)
= \int_{\Sigma_\lambda} \!\left( \frac{1}{|x-z|^{\,n-2\sigma}} - \frac{1}{|x^\lambda - z|^{\,n-2\sigma}} \right) \\
&\quad \times \Bigg[
K(\frac{|z-T|}{|T||z-H|})  w^\tau(z)  -
K(\frac{|z^\lambda-T|}{|z^\lambda-H||T|}) w^\tau(z^\lambda)
\Bigg] dz.
\end{aligned}
\end{equation}
Moreover, one can verify that
\begin{equation*}
\frac{1}{|x-z|^{\,n-2\sigma}} - \frac{1}{|x^\lambda - z|^{\,n-2\sigma}} > 0,
\qquad \text{for } x,z \in \Sigma_\lambda.
\end{equation*}
For $\lambda \in [\lambda_{0}, \lambda_{0}+\delta]$ and $z \in \Sigma_\lambda$, we have
\begin{equation*}
|z-T| <|z^\lambda - T|,
\qquad |z-H| > |z^\lambda -H|.
\end{equation*}
Define
\begin{equation*}
\Sigma_\lambda^{-} := \{ x \in \Sigma_\lambda \mid w(x) >w(x^\lambda)\}.
\end{equation*}
For $z \in \Sigma_\lambda^{-}$,
\begin{equation}\label{eq 3.1}
\begin{aligned}
   K(\frac{|z-T|}{|T||z-H|}) w^{\tau}(z)- K(\frac{|z^{\lambda}-T|}{|T||z^\lambda-H|}) w^{\tau}(z^{\lambda})
\leq  K(\frac{|z-T|}{|T||z-H|})\bigl(w^{\tau}(z) - w^{\tau}(z^{\lambda})\bigr).
\end{aligned}
\end{equation}
For $z \in \Sigma_{\lambda} \setminus {\Sigma_{\lambda}^{-}}$, we have
\begin{equation}\label{eq 3.2}
\begin{aligned}
 K(\frac{|z-T|}{|T||z-H|}) w^{\tau}(z)
\leq  K(\frac{|z^{\lambda}-T|}{|T||z^\lambda-H|}) w^{\tau}(z^{\lambda}).
\end{aligned}
\end{equation}
Combining \eqref{eq 3.3}-\eqref{eq 3.2}, we obtain
\begin{equation*}
\begin{aligned}
& w(x) - w(x^\lambda) \\
&\leq \int_{\Sigma_{\lambda}^{-}} \!\left( \frac{1}{|x-z|^{n-2\sigma}} - \frac{1}{|x^{\lambda}-z|^{n-2\sigma}} \right)
K(\frac{|z-T|}{|z-H||T|})
\bigl(w^\tau(z) - w^\tau(z^{\lambda})\bigr)\,dz.
\end{aligned}
\end{equation*}
By the mean value theorem,
\begin{equation*}
\begin{aligned}
    w(x) - w(x^{\lambda})
&\leq \tau \!\!\int_{\Sigma_{\lambda}^{-}} \!\frac{1}{|x-z|^{n-2\sigma}}
K(\frac{|z-T|}{|z-H||T|})
w^{\tau-1}(z)\bigl(w(z) - w(z^{\lambda})\bigr)\,dz.
\end{aligned}
\end{equation*}
Notice that
\begin{equation*}
\frac{|z - T|}{|z - H| \, |T|}
\le \frac{1}{|T|} \left( 1 + \frac{|H - T|}{|z - H|} \right)
= \frac{1}{|T|} \left( 1 + \frac{1}{|T| \, |z - H|} \right)
\le C(T, \lambda).
\end{equation*}
Then
\begin{equation*}
    \begin{aligned}
  w(x) - w(x^{\lambda})&\leq C(n,\sigma,T,\lambda) \!\int_{\Sigma_{\lambda}^{-}} \frac{1}{|x-z|^{n-2\sigma}} w^{\tau-1}(z)\bigl(w(z) - w(z^{\lambda})\bigr)\,dz.
    \end{aligned}
\end{equation*}
By the Hardy-Littlewood-Sobolev inequality and H\"older inequality,
\begin{equation*}
\|w - w_{\lambda}\|_{L^{q}(\Sigma_{\lambda}^{-})}
\leq C(n,\sigma) \left(\int_{\Sigma_{\lambda}^{-}} w^{\tau+1}(z)\,dz\right)^{\frac{2\sigma}{n}}
\|w - w_{\lambda}\|_{L^{q}(\Sigma_{\lambda}^{-})}, \qquad q > \frac{n}{n-2\sigma}.
\end{equation*}
It suffices to prove that $|\Sigma_{\lambda}^{-}| \to 0$ as $\delta \to 0$. For $\lambda\in[\lambda_0,\lambda_0+\delta]$, let
\begin{equation*}
E_{\varepsilon} := \{ x \in \Sigma_{\lambda_{0}} \mid w_{\lambda_{0}} - w > \varepsilon \}, \quad
F_{\varepsilon} := \Sigma_{\lambda_{0}} \setminus E_{\varepsilon}, \quad
H_{\lambda} := \Sigma_{\lambda} \setminus \Sigma_{\lambda_{0}}.
\end{equation*}
Then
\begin{equation*}
\Sigma_{\lambda}^{-} \subset (\Sigma_{\lambda}^{-} \cap E_{\varepsilon})
\cup F_{\varepsilon}
\cup H_{\lambda}.
\end{equation*}
Clearly,
\[
\lim_{\lambda \downarrow \lambda_{0}} |H_{\lambda}| = 0,
\qquad
\lim_{\varepsilon \to 0} |F_{\varepsilon}| = 0
\quad (\text{since } w \leq w_{\lambda_{0}} \text{ in } \Sigma_{\lambda_{0}}).
\]
Thus we only need to show
\begin{equation*}
\lim_{\lambda \downarrow \lambda_{0}} |\Sigma_{\lambda}^{-} \cap E_{\varepsilon}| = 0.
\end{equation*}
On $\Sigma_{\lambda}^{-} \cap E_{\varepsilon}$ we have
\[
w - w_{\lambda} = w- w_{\lambda_{0}} + w_{\lambda_{0}} - w_{\lambda} > 0,
\]
and therefore
\[
w_{\lambda_{0}} - w_{\lambda} > w_{\lambda_{0}} - w > \varepsilon.
\]
Hence
\begin{equation*}
E_{\varepsilon} \cap \Sigma_{\lambda}^{-}
\subset \{ x \in \Sigma_{\lambda} \mid w_{\lambda_{0}} - w_{\lambda} > \varepsilon \}
=: G_{\varepsilon}.
\end{equation*}
By Chebyshev's inequality,
\begin{equation*}
|G_{\varepsilon}| \leq \frac{1}{\varepsilon^{\tau+1}}
\int_{\Sigma_{\lambda}} |w_{\lambda} - w_{\lambda_{0}}|^{\tau+1} .
\end{equation*}
For fixed $\varepsilon > 0$, letting $\lambda \downarrow \lambda_{0}$ gives $|G_{\varepsilon}| \to 0$.
Thus we can choose $\delta > 0$ sufficiently small so that
\begin{equation*}
C \!\left( \int_{\Sigma_{\lambda}^{-}} w^{\tau+1} dy \right)^{\!\frac{2\sigma}{n}} < \frac{1}{2},
\qquad  \forall\, \lambda \in [\lambda_{0}, \lambda_{0}+\delta].
\end{equation*}
Consequently $\Sigma_{\lambda}^{-} =\emptyset$ and hence $w \leq w_{\lambda}$ for $\lambda \in [\lambda_{0}, \lambda_{0}+\delta]$. Thus \eqref{eq 4.8} is verified, which contradicts the definition of $\lambda_{0}$.

Therefore,
\[
w \equiv w_{\lambda_{0}} \quad \text{in } \Sigma_{\lambda_{0}}.
\]
This implies $w$ is symmetric about the hyperplane $z\cdot \theta=\lambda_0$. This completes the proof of Lemma \ref{Lemma 4.3}.
\end{proof}

Lemma \ref{Lemma 4.3} implies that $v$ satisfies the assumptions of Lemma \ref{lemma bianchi}.
Hence, either $v$ is constant, or
\begin{equation*}
v(x) = \frac{k}{\left( |x - x_{0}|^{2} + h \right)^{\frac{n - 2\sigma}{2}}}
\end{equation*}
for some constants $k, h > 0$ and some point $x_{0} \in \mathbb{R}^{n}$. This further implies that $K$ is a constant, contradicting the assumption. Thus, the proof of Theorem \ref{thm radial} is finished.
\end{proof}

\section{\texorpdfstring{Liouville theorem for non-radially symmetric $Q$-curvature}{Liouville theorem for non-radially symmetric $Q$-curvature}}\label{non-rad}

In this section, we prove Theorem \ref{thm Lin} by combining the method of moving spheres in integral form, the technique of local blow-up analysis, and the Pohozaev identity.

\begin{proof}[Proof of Theorem \ref{thm Lin}] We argue by contradiction. Suppose that $u$ is a positive solution. It follows directly from Theorem \ref{thm equivalence} that $u$ also satisfies the integral equation
\begin{equation*}
    u(x) = \int_{\mathbb{R}^n} \frac{K(y)\, u^\frac{n+2\sigma}{n-2\sigma}(y)}{|x-y|^{\,n-2\sigma}} \, dy.
\end{equation*}
Denote $\tau=\frac{n+2\sigma}{n-2\sigma}$. For $\lambda > 0$, we define the Kelvin transform of $u$ by
\begin{equation*}
    u_\lambda(x) = \left( \frac{\lambda}{|x|} \right)^{n-2\sigma}
    u\!\left( \frac{\lambda^2 x}{|x|^2} \right),\quad x\neq 0.
\end{equation*}
A direct calculation shows that
\begin{equation}\label{sobolev eq 8}
\begin{aligned}
  u(x)-u_\lambda(x)
  &= \int_{B_\lambda}
  \Bigg( \frac{1}{|x-y|^{\,n-2\sigma}}
  - \frac{1}{\Big|\frac{\lambda^2 x}{|x|^2}-y\Big|^{\,n-2\sigma}}
    \left(\frac{\lambda}{|x|}\right)^{n-2\sigma} \Bigg)  \\
  &\qquad\quad \times
   \left( K(y)\, u^\tau(y) - K \! \left(\frac{\lambda^2 y}{|y|^2}\right) u_\lambda^\tau(y) \right) \, dy.
\end{aligned}
\end{equation}
Moreover, one can verify that
\[
   \frac{1}{|x-y|^{\,n-2\sigma}}
   - \frac{1}{\Big|\frac{\lambda^2 x}{|x|^2}-y\Big|^{\,n-2\sigma}}
     \left(\frac{\lambda}{|x|}\right)^{n-2\sigma} > 0,
   \qquad x,y \in B_\lambda \setminus \{0\}.
\]

\begin{lemma}\label{lemma 3.1}
Assume that $n\ge 2$ and $0< \sigma< n/2$. Let $u$ and $K$ be given as in Theorem \ref{thm Lin}. If $u$ is a positive solution, then
    $|x|^{\frac{n-2\sigma}{2}} u(x)$ is increasing along each ray $\{\, t \xi : t \geq 0 \,\}$ where $\xi\in\mathbb{R}^n$ and $|\xi| = 1$.
\end{lemma}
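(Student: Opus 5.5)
The plan is to run the method of moving spheres centered at the origin in integral form, using identity \eqref{sobolev eq 8}. The aim is to show that for \emph{every} $\lambda>0$,
\[
u_\lambda \le u \quad \text{in } B_\lambda\setminus\{0\}.
\]
Once this holds, take $x=t\xi$ with $t<\lambda$ and write $\lambda^2/t=s$. The inequality $u_\lambda(t\xi)\le u(t\xi)$ reads $\lambda^{n-2\sigma}t^{-(n-2\sigma)}u(s\xi)\le u(t\xi)$. Since $\lambda^2=ts$, this becomes
\[
s^{\frac{n-2\sigma}{2}}u(s\xi)\le t^{\frac{n-2\sigma}{2}}u(t\xi).
\]
This is the reversed direction, so I must fix the orientation carefully. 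The monotonicity of $K$ enters through the factor $K(y)u^\tau(y)-K(\lambda^2y/|y|^2)u_\lambda^\tau(y)$ for $y\in B_\lambda$. Because $K$ is nondecreasing along rays, $K(\lambda^2 y/|y|^2)\ge K(y)$. The natural comparison is therefore $u\le u_\lambda$ in $B_\lambda\setminus\{0\}$: on the set where $u\ge u_\lambda$ the bracket is $\le K(y)(u^\tau-u_\lambda^\tau)$, and elsewhere it is $\le 0$.

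So I will prove that $u\le u_\lambda$ in $B_\lambda\setminus\{0\}$ for every $\lambda>0$. By the computation above this gives $t^{\frac{n-2\sigma}{2}}u(t\xi)\le s^{\frac{n-2\sigma}{2}}u(s\xi)$ for $t<s$, which is the claimed monotonicity.

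The steps are as follows.

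\textbf{Step 1 (start for small $\lambda$).} Set $\Sigma_\lambda^-=\{x\in B_\lambda\setminus\{0\}: u>u_\lambda\}$. Using \eqref{sobolev eq 8}, positivity of the kernel difference, the sign argument above and the mean value theorem, I get
\[
u-u_\lambda\le C\int_{\Sigma_\lambda^-}|x-y|^{2\sigma-n}\,K(y)\,u^{\tau-1}(u-u_\lambda)\,dy.
\]
Hardy--Littlewood--Sobolev and H\"older then give
\[
\|u-u_\lambda\|_{L^q(\Sigma_\lambda^-)}\le C\|K\|_{L^\infty(B_\lambda)}\|u\|^{\tau-1}_{L^{\tau+1}(B_\lambda)}\|u-u_\lambda\|_{L^q(\Sigma_\lambda^-)}.
\]
Since $u$ is continuous, the coefficient tends to $0$ as $\lambda\to0$. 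Hence $\Sigma_\lambda^-=\emptyset$ for small $\lambda$.

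\textbf{Step 2 (push to every $\lambda$).} Let $\bar\lambda=\sup\{\lambda: u\le u_\mu \text{ in } B_\mu\setminus\{0\}\ \forall \mu<\lambda\}$ and suppose $\bar\lambda<\infty$. The only place to check is near the origin, where $u_{\bar\lambda}(x)\sim \bar\lambda^{n-2\sigma}|x|^{2\sigma-n}u(\bar\lambda^2x/|x|^2)$. I need $\liminf_{|y|\to\infty}|y|^{n-2\sigma}u(y)>0$. This follows from the integral representation: $K\ge0$ and $K\not\equiv0$ wherever $u>0$ (otherwise $u\equiv0$), so $u(y)\ge c|y|^{2\sigma-n}$ for large $|y|$.

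There are then two cases.
\begin{itemize}
\item If $u\not\equiv u_{\bar\lambda}$, the integral identity gives strict inequality $u<u_{\bar\lambda}$ in $B_{\bar\lambda}\setminus\{0\}$, with a uniform positive gap near $0$ from the singular lower bound just noted. The same measure-theoretic continuation as in Lemma \ref{lemma 4.1} (sets $E_\varepsilon$, $F_\varepsilon$, $H_\lambda$ and Chebyshev) then extends the comparison to $[\bar\lambda,\bar\lambda+\delta]$, contradicting the definition of $\bar\lambda$.
\item If $u\equiv u_{\bar\lambda}$, plugging into \eqref{sobolev eq 8} forces $K(y)=K(\bar\lambda^2y/|y|^2)$ wherever $u^\tau>0$. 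By ray monotonicity $K$ is then constant on each ray beyond $0$, so $K$ is constant, which is excluded.
\end{itemize}
Hence $\bar\lambda=\infty$.

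\textbf{Main obstacle.} I expect the hardest part to be the continuation step near the two boundaries. Near the origin, $u_\lambda$ blows up and the relevant integrals must stay small; I will handle this using the lower bound $u(y)\ge c|y|^{2\sigma-n}$ and the uniform local positivity of $u$. Near $\partial B_\lambda$, $u=u_\lambda$, and smallness comes only from $|H_\lambda|\to0$ together with the equicontinuity of $u$. Finally, strict monotonicity (``increasing'' rather than nondecreasing) would follow by noting that equality at one point forces $u\equiv u_\lambda$, which was ruled out.
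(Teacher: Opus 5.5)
Your proposal is correct and is essentially the paper's proof. It uses moving spheres about the origin on the integral identity \eqref{sobolev eq 8}, the Hardy--Littlewood--Sobolev start for small $\lambda$, the $E_\varepsilon$/$F_\varepsilon$/$H_\lambda$ Chebyshev continuation past a finite $\bar\lambda$, the equality case forcing $K$ to be constant, and the final reading of the monotonicity of $|x|^{\frac{n-2\sigma}{2}}u$ along rays.

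One small remark concerns the origin. The lower bound $u(y)\ge c|y|^{2\sigma-n}$ does not produce a gap there; it only gives $u_\lambda\ge c\lambda^{2\sigma-n}$ near $0$. However, no gap is needed: the HLS coefficient is $\int_{\Sigma_\lambda^-}u^{\tau+1}$ with $u$ bounded, so it suffices to make $|\Sigma_\lambda^-|$ small, excising a small ball $B_r$ around the origin when applying Chebyshev.
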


\begin{proof}

\noindent{\it Step 1.  $u_\lambda \geq u$ in $B_\lambda \setminus \{0\}$ for $\lambda > 0$ sufficiently small.}

Define
\[
\Sigma_\lambda^{-} = \{\, x \in B_\lambda \setminus \{0\} \;\mid\; u_\lambda(x) < u(x) \,\}.
\]
If $y \in \ B_\lambda \setminus \{0\}$, then
\begin{equation*}
    K(y) u^\tau(y) - K\!\left(\frac{\lambda^2 y}{|y|^2}\right) u_\lambda^\tau(y)
    \leq K(y)\,\big(u^\tau(y)-u_\lambda^\tau(y)\big).
\end{equation*}
Moreover, if $y \in B_\lambda \setminus (\Sigma_\lambda^{-} \cup \{0\})$, we have
\begin{equation*}
    K(y) u^\tau(y) - K\!\left(\frac{\lambda^2 y}{|y|^2}\right) u_\lambda^\tau(y)
    \leq K(y)\,\big(u^\tau(y)-u_\lambda^\tau(y)\big) \leq 0.
\end{equation*}
Therefore, for any $x \in B_\lambda \setminus \{0\}$,
\begin{equation*}
\begin{aligned}
u(x)-u_\lambda(x)
&\leq \int_{\Sigma_\lambda^{-}}
\bigg( \frac{1}{|x-y|^{\,n-2\sigma}}
 - \frac{1}{\big|\frac{\lambda^2 x}{|x|^2}-y\big|^{\,n-2\sigma}}
   \left(\frac{\lambda}{|x|}\right)^{n-2\sigma} \bigg) \\
&\qquad \times K(y)\,\big(u^\tau(y)-u_\lambda^\tau(y)\big)\,dy \\[4pt]
&= \tau \int_{\Sigma_\lambda^{-}}
\bigg( \frac{1}{|x-y|^{\,n-2\sigma}}
 - \frac{1}{\big|\frac{\lambda^2 x}{|x|^2}-y\big|^{\,n-2\sigma}}
   \left(\frac{\lambda}{|x|}\right)^{n-2\sigma} \bigg) \\
&\qquad \times K(y)\, \xi^{\tau-1}\,\big(u(y)-u_\lambda(y)\big)\,dy \\[4pt]
&\leq \tau \int_{\Sigma_\lambda^{-}} \frac{1}{|x-y|^{\,n-2\sigma}} K(y)\,u^{\tau-1}(y)\,\big(u(y)-u_\lambda(y)\big)\,dy \\
&\leq C \int_{\Sigma_\lambda^{-}} \frac{1}{|x-y|^{\,n-2\sigma}} u^{\tau-1}(y)\,\big(u(y)-u_\lambda(y)\big)\,dy,
\end{aligned}
\end{equation*}
where $u_\lambda(y) < \xi < u(y)$ and $C=C(n,\sigma,\|K\|_{L^\infty(B_1)})$.

By the Hardy-Littlewood-Sobolev inequality and H\"older inequality, we deduce for any $q > \frac{n}{n-2\sigma}$ that
\begin{equation}\label{eq:HLS-step}
    \| u - u_\lambda \|_{L^q(\Sigma_\lambda^-)}
    \leq C \left( \int_{\Sigma_\lambda^-} u^{\tau+1}(y)\,dy \right)^{\frac{2\sigma}{n}}
       \| u - u_\lambda \|_{L^q(\Sigma_\lambda^-)}.
\end{equation}
Since $u \in C(\mathbb{R}^n)$, the integral in \eqref{eq:HLS-step} can be made arbitrarily small when $\lambda>0$ is chosen sufficiently small. In particular, we obtain
\[
\| u - u_\lambda \|_{L^q(\Sigma_\lambda^-)}
\leq \frac{1}{2}\, \| u - u_\lambda \|_{L^q(\Sigma_\lambda^-)}.
\]
Hence, it follows that
\begin{equation*}
    u_\lambda \geq u \quad \text{in } B_\lambda \setminus \{0\}.
\end{equation*}
Step 1 is verified.

Define
\begin{equation}
\begin{aligned}
\lambda_0
   &= \sup \left\{ \lambda > 0 \;\middle|\;
      u \leq u_\rho \;\; \text{in } B_\rho \setminus \{0\}, \;\;  \forall\, 0 < \rho < \lambda \right\}.
\end{aligned}
\end{equation}

\noindent{\it Step 2. $\lambda_0 = +\infty$.}

Suppose, for contradiction, that $\lambda_0 < +\infty$.
By the definition of $\lambda_0$, we have
\begin{equation*}
u \leq u_{\lambda_0} \quad \text{in } B_{\lambda_0}\setminus \{0\}.
\end{equation*}
One can show that the inequality is strict.
Indeed, since
\begin{equation*}
K(y) u^{\tau}(y) - K\!\left(\frac{\lambda_{0}^{2} y}{|y|^{2}}\right)
u_{\lambda_{0}}^{\tau}(y)
\leq K(y)\bigl(u^{\tau}(y) - u_{\lambda_{0}}^{\tau}(y)\bigr) \leq 0,
\qquad y \in B_{\lambda_{0}} \setminus \{0\},
\end{equation*}
if there exists \(x_{0} \in B_{\lambda_{0}} \setminus \{0\}\) such that
\(u(x_{0}) = u_{\lambda_{0}}(x_{0})\),
then by \eqref{sobolev eq 8} it follows that
\begin{equation*}
K(y) u^\tau(y) \equiv K\!\left(\frac{\lambda_0^2 y}{|y|^2}\right) u_{\lambda_0}^\tau(y),
   \qquad y \in B_{\lambda_0}\setminus \{0\}.
\end{equation*}
By \eqref{sobolev eq 8} again, we have
\begin{equation*}
u \equiv u_{\lambda_0} \quad \text{in } B_{\lambda_0}\setminus \{0\},
\end{equation*}
which further implies that
\begin{equation*}
K(y) \equiv K\!\left(\frac{\lambda_0^2 y}{|y|^2}\right),
   \qquad y \in B_{\lambda_0}\setminus \{0\}.
\end{equation*}
Thus $K$ must be constant, contradicting the assumption.
Hence
\begin{equation*}
u < u_{\lambda_0} \quad \text{in } B_{\lambda_0}\setminus \{0\}.
\end{equation*}
Once the strict inequality is known, one can extend the comparison beyond $\lambda_0$.
More precisely, there exists a small $\delta > 0$ such that
\begin{equation*}
u(x) \leq u_\lambda(x)
\quad \text{for } x \in B_\lambda \setminus \{0\},
\quad \lambda \in [\lambda_0, \lambda_0 + \delta].
\end{equation*}
Indeed, for $x \in B_\lambda \setminus \{0\}$ and $\lambda \in [\lambda_0, \lambda_0 + \delta]$,
set
\begin{equation*}
\Sigma_\lambda^- := \{\, x \in B_\lambda \setminus \{0\} : u(x) > u_\lambda(x) \,\}.
\end{equation*}
Using an argument similar to that in Step~1, one checks that for any $q > \frac{n}{n-2\sigma}$,
\begin{equation*}
\|u - u_\lambda\|_{L^q(\Sigma_\lambda^-)}
\leq C \left( \int_{\Sigma_\lambda^-} u^{\tau+1}(y)\, dy \right)^{\frac{2\sigma}{n}}
   \|u - u_\lambda\|_{L^q(\Sigma_\lambda^-)}.
\end{equation*}
For $\delta>0$ sufficiently small, we can verify that
\(|\Sigma_{\lambda}^{-}|\) is arbitrarily small.
Indeed, for any $\varepsilon>0$, define
\begin{equation*}
E_{\varepsilon} =
\left\{ x \in B_{\lambda_{0}}\setminus\{0\} \ \middle|\ u_{\lambda_{0}} - u >\varepsilon \right\},
\qquad
F_{\varepsilon} = B_{\lambda_{0}} \setminus (\{0\}\cup E_\varepsilon).
\end{equation*}
For $\lambda > \lambda_{0}$, let $H_{\lambda} = B_{\lambda}\setminus B_{\lambda_{0}}$.
Then one has
\begin{equation*}
\Sigma_{\lambda}^{-} \subset
\bigl(\Sigma_{\lambda}^{-}\cap E_{\varepsilon}\bigr)
\cup
\bigl(\Sigma_{\lambda}^{-}\cap F_{\varepsilon}\bigr)
\cup
H_{\lambda}.
\end{equation*}
Clearly $\lim\limits_{\lambda \to \lambda_{0}} |H_{\lambda}| = 0$.
Since $u_{\lambda_{0}} > u$ in $B_{\lambda_{0}}\setminus\{0\}$,
it follows that $\lim\limits_{\varepsilon\to 0}|F_{\varepsilon}|=0$.
Thus, it remains to show that
\begin{equation*}
\lim_{\lambda \to \lambda_{0}}
|\Sigma_{\lambda}^{-}\cap E_{\varepsilon}| = 0.
\end{equation*}
Observe that for $x \in \Sigma_{\lambda}^{-}\cap E_{\varepsilon}$,
\begin{equation*}
u - u_{\lambda}
= (u - u_{\lambda_{0}}) + (u_{\lambda_{0}} - u_{\lambda}) > 0,
\end{equation*}
which implies
\begin{equation*}
u_{\lambda_{0}} - u_{\lambda} > u_{\lambda_{0}} - u \geq \varepsilon,
\qquad x \in \Sigma_{\lambda}^{-}\cap E_{\varepsilon}.
\end{equation*}
Hence,
\begin{equation*}
E_{\varepsilon}\cap \Sigma_{\lambda}^{-}
\subset
\left\{ x \in B_{\lambda_{0}}\setminus\{0\} \ \middle|\
u_{\lambda_{0}} - u_{\lambda} \geq \varepsilon \right\}
=: G_{\lambda}.
\end{equation*}
By Chebyshev's inequality,
\begin{equation*}
|G_{\lambda}| \leq
\frac{1}{\varepsilon^{\tau+1}}
\int_{B_{\lambda_{0}}\setminus\{0\}}
|u_{\lambda_{0}} - u_{\lambda}|^{\tau+1} .
\end{equation*}
For fixed $\varepsilon$, letting $\lambda \to \lambda_{0}$ gives
$|G_{\lambda}|\to 0$. Therefore, we can fix a small $\varepsilon>0$ and then choose $\delta>0$ sufficiently small such that
\begin{equation*}
C \left( \int_{\Sigma_{\lambda}^{-}}
u^{\tau+1} dy \right)^{\frac{2\sigma}{n}}
< \frac{1}{2}, \qquad
\lambda \in [\lambda_{0}, \lambda_{0}+\delta].
\end{equation*}
Consequently, $\Sigma_{\lambda}^{-}=\emptyset$ and
\begin{equation*}
u \leq u_\lambda \quad \text{for all } \lambda \in [\lambda_0, \lambda_0 + \delta].
\end{equation*}
This contradicts the definition of $\lambda_0$ as a supremum.
We conclude that
\begin{equation*}
\lambda_0 = +\infty.
\end{equation*}

\noindent{\it Step 3. $|x|^{\frac{n-2\sigma}{2}} u(x)$ is increasing along each ray.}

By Step 2, for all $\lambda > 0$ and $|x| \leq \lambda$, we have
\begin{equation*}
|x|^{\frac{n-2\sigma}{2}} u(x)
\leq \left( \frac{\lambda^2}{|x|} \right)^{\frac{n-2\sigma}{2}}
       u\!\left( \frac{\lambda^2 x}{|x|^2} \right).
\end{equation*}
Define
\begin{equation*}
W(x) := |x|^{\frac{n-2\sigma}{2}} u(x).
\end{equation*}
Then
\begin{equation*}
W(x) \leq W\!\left( \frac{\lambda^2 x}{|x|^2} \right),
\qquad \forall\, |x| \leq \lambda, \;\; \forall\, \lambda > 0.
\end{equation*}
Now fix $0 < t_1 < t_2$ and $\xi \in \mathbb{R}^n$ with $|\xi|=1$.
Take $\lambda$ and $|x|$ such that
\begin{equation*}
t_1 = |x|, \qquad t_2 = \frac{\lambda^2}{|x|}, \qquad \xi = \frac{x}{|x|}.
\end{equation*}
It follows that
\begin{equation*}
W(t_1 \xi) \leq W(t_2 \xi).
\end{equation*}
Thus $W(x)$ is increasing along each ray $\{\, t \xi : t \geq 0 \,\}$ where $|\xi| = 1$.
\end{proof}

\begin{lemma}\label{lemma infty}
    Assume that $n\ge 2$ and $0< \sigma< n/2$. Let $u$ and $K$ be given as in Theorem \ref{thm Lin}. If $K(\infty)=+\infty$, then \eqref{sobolev eq 7} admits no positive solution.
\end{lemma}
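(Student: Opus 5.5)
The plan is to argue by contradiction: assume $u$ is a positive solution with $K(\infty)=+\infty$, and combine the monotonicity from Lemma~\ref{lemma 3.1} with the integral representation from Theorem~\ref{thm equivalence}.

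\textbf{Step 1: a lower bound at infinity.} By Lemma~\ref{lemma 3.1}, $W(x)=|x|^{\frac{n-2\sigma}{2}}u(x)$ is nondecreasing along every ray. Since $u$ is continuous and positive, $u\ge c_0>0$ on the unit sphere. Hence
\[
u(x)\ \ge\ c_0\,|x|^{-\frac{n-2\sigma}{2}}\qquad\text{for } |x|\ge 1 .
\]

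\textbf{Step 2: a finite weighted integral.} Evaluating the integral equation at $x=0$ gives
\[
u(0)=C_{n,\sigma}\int_{\mathbb{R}^n}\frac{K(y)\,u^{\tau}(y)}{|y|^{n-2\sigma}}\,dy<\infty, \qquad \tau=\frac{n+2\sigma}{n-2\sigma}.
\]
The integrand is nonnegative since $K\ge 0$.

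\textbf{Step 3: the contradiction.} Insert the lower bound from Step 1. For $|y|\ge1$ one has $u^\tau(y)\ge c_0^\tau |y|^{-\frac{n+2\sigma}{2}}$, so the integrand is at least
\[
c\,K(y)\,|y|^{-\frac{n+2\sigma}{2}-(n-2\sigma)}=c\,K(y)\,|y|^{-\frac{3n-2\sigma}{2}} .
\]
Since $K(y)\to\infty$, we have $K\ge M$ outside a large ball $B_{R_M}$. The integral over $B_{R_M}^c$ is then at least $cM\int_{R_M}^\infty r^{n-1-\frac{3n-2\sigma}{2}}\,dr$. The exponent is $-\frac{n-2\sigma}{2}-1+\ldots$; more precisely $n-1-\frac{3n-2\sigma}{2}=-\frac{n}{2}+\sigma-1$. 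This is less than $-1$, so the power alone converges and no contradiction comes from pure growth at $x=0$.

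\textbf{Step 3 refined (the main obstacle).} Instead I will evaluate the integral equation at a far point $x$ and keep only the contribution from $B_{|x|/2}(x)$:
\[
u(x)\ \ge\ c\int_{B_{|x|/2}(x)}\frac{K(y)u^\tau(y)}{|x-y|^{n-2\sigma}}dy .
\]
On that ball, $K\ge m(|x|):=\inf_{|y|\ge |x|/2}K\to\infty$ and $u^\tau\ge c|x|^{-\frac{n+2\sigma}{2}}$. The kernel integrates to $c|x|^{2\sigma}$. This gives
\[
u(x)\ge c\,m(|x|)\,|x|^{2\sigma-\frac{n+2\sigma}{2}} = c\,m(|x|)\,|x|^{-\frac{n-2\sigma}{2}} ,
\]
that is, $W(x)\ge c\,m(|x|)\,c_0^\tau$. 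I will iterate this. Writing $A(R)=\inf_{|x|\ge R}W$, the same computation yields $A(R)\ge c\,m(R/2)\,A(R/2)^\tau$, so $A(R)\to\infty$. Then $u(x)\ge A|x|^{-\frac{n-2\sigma}{2}}$ with $A$ arbitrarily large beyond some radius, so $u^{\tau-1}K\ge A^{\tau-1}M|x|^{-2\sigma}$ there.

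To turn this into a contradiction I will use the integrability from Proposition~\ref{prop u lift}, applied with $f=Ku^\tau$, $\alpha=0$ and $p=\tau=\frac{n+0}{n-2\sigma}+\ldots\ge\frac{n}{n-2\sigma}$. It gives $\int |f|/(1+|y|^\gamma)<\infty$ for all $\gamma>n-2\sigma-\frac{2\sigma}{\tau-1}=\frac{n-2\sigma}{2}$. Now take $\gamma$ slightly above $\frac{n-2\sigma}{2}$ and test the lower bound $Ku^\tau\ge M A^\tau |y|^{-\frac{n+2\sigma}{2}}$. The resulting radial integrand is $r^{n-1-\frac{n+2\sigma}{2}-\gamma}$, with exponent $-1+(\frac{n-2\sigma}{2}-\gamma)$, which converges. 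So I will instead exploit that $K\to\infty$ without rate through a sharper route: iterate with the kernel estimate $u(x)\ge c|x|^{2\sigma-n}\int_{B_{|x|}}Ku^\tau$.

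Set $I(R)=\int_{B_R}Ku^\tau$. Then $I(2R)-I(R)\ge c\,m(R)\,R^{n}\,(R^{2\sigma-n}I(R))^\tau$, by combining this bound with the lower bound for $u$ on the annulus. Since $\tau>1$, a superlinear recursion of this type blows up in finite "log-time", contradicting the finiteness of $\int Ku^\tau|y|^{-\gamma}$ from Proposition~\ref{prop u lift}. Making this recursion yield a genuine blow-up, and not a merely borderline one, is where I expect the real difficulty. The leverage comes from the factor $m(R)\to\infty$ together with the lower bound $W\ge c_0$, which supplies $I(R)\gtrsim R^{\frac{n-2\sigma}{2}}$ as a seed.
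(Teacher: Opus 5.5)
There is a genuine gap: the proposal never reaches a contradiction. Steps 1--2 are correct, and so is the first half of your refined Step 3, which gives $W(x)\ge c\,\tilde m\,c_0^\tau$ with $\tilde m=\inf_{|y|\ge|x|/2}K$. You then abandon that route after testing the lower bound $Ku^\tau\ge MA^\tau|y|^{-\frac{n+2\sigma}{2}}$ with a \emph{fixed} $A$ against a convergent weight. The replacement recursion for $I(R)=\int_{B_R}Ku^\tau$ is only stated, and you explicitly leave the decisive step open.

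A convergent lower bound is not a contradiction by itself; the right comparison is on tails. From $u(0)<\infty$ one has $\int_{|y|\ge R}Ku^\tau|y|^{2\sigma-n}\,dy\to0$. Your bound gives this tail $\ge c\,A(R)^\tau R^{-\frac{n-2\sigma}{2}}$ once $K\ge1$ on $\{|y|\ge R\}$. So you need $A(R)$ to grow faster than a power of $R$, and the mere fact $A(R)\to\infty$ does not give this, since it only reflects the possibly very slow growth of $K$.

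Your own recursion does supply the needed growth. Once $c\,\tilde m\ge1$ and $A(R_1)>1$, the relation $A(2R)\ge c\,\tilde m\,A(R)^\tau$ yields $\log A(2^kR_1)\ge\tau^k\log A(R_1)$, i.e.\ super-polynomial growth, and the contradiction follows. The same computation closes your $I(R)$ recursion after normalizing $J(R)=R^{-\frac{n-2\sigma}{2}}I(R)$. But this verification is exactly what is missing from the proposal.

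The paper sidesteps growth rates entirely. Instead of the inward ball $B_{|x|/2}(x)$, it integrates over the outward annulus $\{|x|\le|y|\le2|x|\}$. There $K(y)\ge m(x)=\min_{\partial B_{|x|}}K$ by the monotonicity of $K$ along rays, $m$ is nondecreasing in $|x|$, and the lower bound $u(y)\ge A|y|^{-\frac{n-2\sigma}{2}}$ is used on the \emph{same} exterior region $\{|y|\ge R_0\}$ on which it is being improved. This gives $W\ge c\,m(R_0)A^\tau$ on $\{|x|\ge R_0\}$, so the bound is self-improving on a fixed region.

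Concretely, take $A_0=\min_{\partial B_1}u$, which is independent of $R_0$ by Lemma~\ref{lemma 3.1}, and choose $R_0$ large enough that $c\,m(R_0)A_0^{\tau-1}>1$. Then the iterates $A_{q+1}=c\,m(R_0)A_q^\tau$ tend to $+\infty$. Inserting $u\ge A_q|y|^{-\frac{n-2\sigma}{2}}$ on $\{|y|\ge R_0\}$ into the integral equation at $x=0$ forces $u(0)=+\infty$. It is your choice of the inward ball that makes the recursion step from $R/2$ to $R$ and produces the apparent borderline behavior.
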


\begin{proof}
Suppose that $u$ is a positive solution. Denote $m(x)=\min\limits_{\partial B_{|x|}} K$. Then there exists $R_0>1$ such that
\begin{equation*}
    m(x)>1,\quad \forall\, |x|\ge R_0.
\end{equation*}
By Lemma \ref{lemma 3.1}, we have
\begin{equation*}
u(x)\ge (\min_{\partial B_{1}} u)\,|x|^{-\frac{n-2\sigma}{2}}
= C_{0}\,|x|^{-\frac{n-2\sigma}{2}},
\qquad \forall\,|x|\ge R_0.
\end{equation*}
Then for any $|x|\ge R_0$,
\begin{equation*}
\begin{aligned}
u(x)
&= C_{n,\sigma}\int_{\mathbb{R}^n}\frac{K(y)\,u^{\tau}(y)}{|x-y|^{\,n-2\sigma}}\,dy  \ge C_{n,\sigma}\,m(x)
\int_{|x|\le |y|\le 2|x|}
\frac{dy}{|y|^{\frac{n+2\sigma}{2}}\,|x-y|^{\,n-2\sigma}} \\
&\ge C_{n,\sigma}m(x)\, |x|^{-(n-2\sigma)}  \int_{|x|\le |y|\le 2|x|}
\frac{dy}{|y|^{\frac{n+2\sigma}{2}}}= C_{n,\sigma}\,m(x)\,|x|^{-\frac{n-2\sigma}{2}} .
\end{aligned}
\end{equation*}
Repeating this process, we have
\begin{equation*}
\begin{aligned}
u(x)
&= C_{n,\sigma}\int_{\mathbb{R}^n}\frac{K(y)\,u^{\tau}(y)}{|x-y|^{\,n-2\sigma}}\,dy \ge C_{n,\sigma}\,m(x)\int_{|x|\le |y|\le 2|x|}
\frac{m^{\tau}(y)}{|y|^{\frac{n+2\sigma}{2}}\,|x-y|^{\,n-2\sigma}}\,dy \\
&\ge C_{n,\sigma}\,m^{\tau+1}(x)\,|x|^{-\frac{n-2\sigma}{2}},\quad \text{for any $|x|\ge R_0$}.
\end{aligned}
\end{equation*}
After repeating this process $q$ times, we have
\begin{equation*}
\begin{aligned}
u(x)\ge C_{n,\sigma}\,(m(x))^\frac{\tau^q-1}{\tau-1}\,|x|^{-\frac{n-2\sigma}{2}},\quad \text{for any $|x|\ge R_0$}.
\end{aligned}
\end{equation*}
Thus
\begin{equation*}
u(0)
=  C_{n,\sigma} \int_{\mathbb{R}^n}\frac{K(y)\,u^{\tau}(y)}{|y|^{\,n-2\sigma}}\,dy
\ge C_{n,\sigma}\, (m(R_0))^{\frac{\tau(\tau^q-1)}{\tau-1}+1}\,\int_{|y|\ge R_0}
\frac{dy }{|y|^{\frac32n-\sigma}}\to \infty,
\end{equation*}
as $q\to\infty$, which contradicts $u\in C(\mathbb{R}^n)$.
\end{proof}

\begin{lemma}\label{lemma 3.2}
Assume that $n\ge 2$ and $0<\sigma<n/2$. Let $u$ and $K$ be given as in Theorem \ref{thm Lin}. If $u$ is a positive solution and $K(\infty)\in(0,+\infty)$, then
$|x|^{\frac{n-2\sigma}{2}} u(x)$ is uniformly bounded in $\mathbb{R}^n$.
\end{lemma}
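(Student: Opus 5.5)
We argue by contradiction, suppose $W(x):=|x|^{\frac{n-2\sigma}{2}}u(x)$ is unbounded, and exploit the monotonicity of $W$ along rays from Lemma \ref{lemma 3.1} together with the integral representation
\[
u(x)=C_{n,\sigma}\int_{\mathbb{R}^n}\frac{K(y)u^\tau(y)}{|x-y|^{n-2\sigma}}\,dy,
\]
much as in Lemma \ref{lemma infty}. The idea is that large values of $W$ on a big region feed back through the integral equation with a gain factor $W^{\tau-1}$, which forces $u(0)=\infty$.

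\textbf{Step 1: unboundedness spreads to a cone.} Since $W$ is nondecreasing along each ray, unboundedness gives $M_k\to\infty$ and points $x_k$ with $W(x_k)\ge M_k$. Monotonicity then gives $W\ge M_k$ on the whole half-ray $\{tx_k/|x_k|:t\ge1\}$. We need a set of positive angular measure. For this we use a Harnack-type lower bound from the integral equation. Since $K\ge0$ and $K\ge c_0>0$ for $|y|$ large (because $K(\infty)>0$), we keep only the contribution near a point $y_0$ with $|y_0|=r$ and $W(y_0)\ge M$. For $|x|\sim r$, the lower bound on $u$ near $y_0$ comes from integrating over the ray segment's neighborhood. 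To control that neighborhood, apply Lemma \ref{lemma 3.1} around $y_0$: $u(y)\ge W(y_0 |y|/|y_0|\cdot\ldots)$.

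That last route is awkward, so instead we work directly with the whole sphere. Define $\underline{W}(r)=\min_{\partial B_r}W$. By Lemma \ref{lemma 3.1} this is nondecreasing in $r$ and is positive by continuity. The integral equation yields
\[
u(x)\ge c\int_{r\le|y|\le 2r}\frac{u^\tau(y)}{|x-y|^{n-2\sigma}}\,dy \quad\text{for } |x|=r \text{ large}.
\]

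\textbf{Step 2: mean-value bound.} For $|x|\le 4r$ we have $|x-y|\le 6r$ on that annulus, so
\[
u(x)\ge c\,r^{-(n-2\sigma)}\int_{r\le|y|\le2r}u^\tau.
\]
This gives a pointwise lower bound on all of $B_{4r}\setminus B_r$ in terms of an annular average. It is the needed Harnack-type inequality, and it holds uniformly in direction.

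\textbf{Step 3: feedback with a gain.} If $W(y_0)\ge M$ with $|y_0|=r$, then $W\ge M$ on the segment from $y_0$ to $2y_0$. To pass from this one-dimensional set to a set of positive $n$-dimensional measure, we use Step 2 at the smaller scale $\rho=|y_0|/2$. The averaged quantity $A(r):=r^{-(n-2\sigma)}\int_{r\le|y|\le2r}u^\tau$ satisfies $u\ge cA(r)$ on the annulus $\{4r\ge|x|\ge r\}$. Hence $A(2r)\ge c\,r^{2\sigma}A(r)^\tau$, i.e. $\tilde A:=r^{\frac{n-2\sigma}{2}}A$ obeys $\tilde A(2r)\ge c\tilde A(r)^\tau$. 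The main obstacle is to obtain $\tilde A(r)$ large from the unboundedness of $W$. To get it, we integrate the representation only over a neighborhood of the ray, using continuity of $u$ plus monotonicity in the radial variable. Thus $u\ge M|y|^{-\frac{n-2\sigma}{2}}$ on the segment, and the equation at points $x$ on the segment gives $u(x)\ge c\,u(x)^\tau\cdot(\text{local volume})$ bounds. I expect this step to be the main difficulty.

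\textbf{Step 4: conclusion.} Once $\tilde A(r_0)\ge C_*$ for a suitable large constant $C_*$, iteration gives $\tilde A(2^kr_0)\to\infty$ doubly exponentially. Then
\[
u(0)\ge c\int_{|y|\ge r_0}\frac{u^\tau}{|y|^{n-2\sigma}}\,dy\ge c\sum_k \tilde A(2^kr_0)^{\,\cdot}\,(2^kr_0)^{-\frac{n-2\sigma}{2}}=\infty,
\]
since the double-exponential growth of $\tilde A$ beats the geometric decay of $(2^kr_0)^{-\frac{n-2\sigma}{2}}$. This contradicts $u\in C(\mathbb{R}^n)$, exactly as in Lemma \ref{lemma infty}. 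So $W$ is bounded.
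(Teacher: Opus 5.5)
Your Steps 2 and 4 are correct. The bound $u\ge cA(r)$ on $B_{4r}$ holds, the recursion $\tilde A(2r)\ge c\,\tilde A(r)^\tau$ is scale-consistent, and once $\tilde A(r_0)>c^{-1/(\tau-1)}$ the doubly exponential growth contradicts $u(0)\ge cA(r)$.

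\textbf{The gap is Step 3.} You flag it yourself and do not resolve it: nothing in the proposal converts $\sup W=\infty$ into a value of $\tilde A(r_0)$ above that fixed threshold, and this conversion is essentially the whole content of the lemma.

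Ray monotonicity spreads a large value of $W$ only along a segment, which is a null set. Continuity of $u$ gives a neighborhood of uncontrolled size, so the ``local volume'' in $u(x)\ge c\,u(x)^\tau\cdot(\text{local volume})$ can be arbitrarily small. Making it quantitative would require an upper Harnack inequality for $u$ on dyadic annuli. For $(-\Delta)^\sigma u=(Ku^{\tau-1})u$, that needs control of $|x|^{2\sigma}u^{\tau-1}=W^{\tau-1}$, which is the conclusion itself.

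In fact your averaged quantities cannot detect the obstruction. Since $\tilde A(r)\ge c\int_{\mathbb{S}^{n-1}}W^\tau(r\theta)\,d\theta$, the iteration at best yields an $L^\tau(\mathbb{S}^{n-1})$ bound on $W(r\cdot)$. Now consider, in $u_r(z)=r^{\frac{n-2\sigma}{2}}u(rz)$, a concentrating bubble of height $m$ and width $\rho=m^{-\frac{2}{n-2\sigma}}$. It makes $W\sim m$ at a point but contributes only $\sim\rho^{\frac{n-2\sigma}{2}}$ to $\tilde A(r)=\int_{1\le|z|\le 2}u_r^\tau$. Even after propagating it outward with Lemma \ref{lemma 3.1} into a radial tube of width $\rho$, the resulting lower bound for $\tilde A$ is only of order $\rho^{\frac{n-2\sigma}{2}-1}$, which is small when $n-2\sigma>2$. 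So the scenario you must exclude is consistent with every averaged inequality you use.

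\textbf{What excludes concentration is the pointwise profile near the peak.} A bubble drops by a fixed factor within radial distance $\rho\ll|x|$, whereas ray monotonicity allows essentially no drop over such a distance. The paper's proof extracts exactly this, in the following steps:
\begin{enumerate}
\item It passes to the Kelvin transform $v$.
\item It selects $\bar y_i$ maximizing $v(y)(l_i-|y-y_i|)^{\frac{n-2\sigma}{2}}$ on $B(y_i,l_i)$, so that the rescalings $\tilde v_i$ are locally bounded.
\item It invokes the Jin--Li--Xiong compactness to obtain a limit solving the integral equation on all of $\mathbb{R}^n$ with constant coefficient $K(\infty)$.
\item It classifies that limit as a bubble (Chen--Li--Ou, Li), which yields a local maximum $y_i^*$ of $v$.
\item It concludes from Lemma \ref{lemma 3.1}, in the inverted picture: $t\mapsto t^{\frac{n-2\sigma}{2}}v(ty_i^*)$ is nonincreasing while $\partial_t v(ty_i^*)=0$ at $t=1$, which forces $v(y_i^*)\le 0$, a contradiction.
\end{enumerate}
Some blow-up or concentration-exclusion step of this kind is the missing idea; without it your argument does not go through.
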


\begin{proof}
Suppose this is not the case, i.e., there exists $x_i \to \infty$ such that
\begin{equation*}
|x_i|^{\frac{n-2\sigma}{2}} u(x_i) \to +\infty.
\end{equation*}
Define
\begin{equation*}
v(y) := \frac{1}{|y|^{n-2\sigma}} u\!\left( \frac{y}{|y|^2} \right), \quad y \neq 0,
\end{equation*}
which satisfies
\begin{equation*}
v(x)   = \int_{\mathbb{R}^n} \frac{K^*(y)}{|x-y|^{n-2\sigma}} v^\tau(y)\,dy, \quad x \neq 0,
\end{equation*}
where $K^*(y) = K\!\left( \frac{y}{|y|^2} \right)$.
Let $y_i = \frac{x_i}{|x_i|^2}$. Then
\begin{equation*}
    |y_i|^{\frac{n-2\sigma}{2}} v(y_i) \;\;\xrightarrow[i\to\infty]{}\; +\infty.
\end{equation*}
Let $l_i = \frac{1}{2}|y_i|$ and set
\begin{equation*}
S(y) := v(y)\,(l_i - |y-y_i|)^{\frac{n-2\sigma}{2}}, \qquad y \in B(y_i,l_i).
\end{equation*}
Choose $\bar{y}_i \in B(y_i,l_i)$ such that
\begin{equation*}
S(\bar{y}_i) = \max_{B(y_i,l_i)} S(y) > S(y_i)
= \Big(\frac{1}{2}|y_i|\Big)^{\frac{n-2\sigma}{2}} v(y_i) \;\;\xrightarrow[i\to\infty]{}\; +\infty.
\end{equation*}
Let $M_i = v(\bar{y}_i)$ and define the rescaling
\begin{equation*}
\tilde{v}_i(x) := M_i^{-1} v\!\bigg( \bar{y}_i + M_i^{-\frac{2}{n-2\sigma}} x \bigg).
\end{equation*}
Then
\begin{equation*}
\tilde{v}_i(x) = \int_{\mathbb{R}^n}
   \frac{K_i(y)\,\tilde{v}_i^\tau(y)}{|x-y|^{\,n-2\sigma}}\,dy,
   \qquad |x| \leq M_i^{\frac{2}{n-2\sigma}} l_i,
\end{equation*}
with
\begin{equation*}
K_i(y) = K^*\!\bigg( \bar{y}_i + M_i^{-\frac{2}{n-2\sigma}} y \bigg).
\end{equation*}
For any $R>0$ and $|x|\leq R$, set $z = \bar{y}_i + M_i^{-\frac{2}{n-2\sigma}}x$. Then
\begin{equation*}
\tilde{v}_i(x) = \frac{v(z)}{v(\bar{y}_i)}
= \frac{S(z)}{S(\bar{y}_i)}
  \left(\frac{l_i-|\bar{y}_i-y_i|}{l_i-|z-y_i|}\right)^{\frac{n-2\sigma}{2}}
\leq 2^{\frac{n-2\sigma}{2}},
\end{equation*}
since $0 < \frac{S(z)}{S(\bar{y}_i)} \leq 1$ and
\begin{equation*}
\begin{aligned}
  l_i - |z-y_i| &\geq  l_i-|\bar{y}_i-y_i|-|\bar{y}_i-z|= l_i-|\bar{y}_i-y_i|-M_i^{\frac{2}{n-2\sigma}}|x|\\
 & \geq\frac{1}{2}\big(l_i - |\bar{y}_i-y_i|\big), \qquad \text{for large } i.
\end{aligned}
\end{equation*}
By Jin-Li-Xiong \cite[proof of Proposition~2.9]{jin-nirenberg-2017}, after passing to a subsequence,
\begin{equation*}
\tilde{v}_i \to U_0 \quad \text{in } C^\alpha_{\mathrm{loc}}(\mathbb{R}^n), \qquad \text{for some }U_0 \in C^\alpha(\mathbb{R}^n) \text{ and }0<\alpha<1,
\end{equation*}
and $U_0$ satisfies
\begin{equation*}
U_0(x) = \int_{\mathbb{R}^n} \frac{K(\infty)\,U_0^\tau(y)}{|x-y|^{\,n-2\sigma}}\,dy,
\end{equation*}
where $K(\infty)=\lim\limits_{|x|\to\infty} K(x)$.
By Chen-Li-Ou \cite{CLO3} or Li \cite{Ly}, the solution has the form
\begin{equation*}
U_0(x) = c\left(\frac{t}{t^2+|x-x_0|^2}\right)^{\frac{n-2\sigma}{2}}.
\end{equation*}
Thus $U_0$ has a maximum at $x_0$, so $\tilde{v}_i$ attains a local maximum at some $\eta_i\to x_0$.
Returning to $v$, this implies a local maximum at
\begin{equation*}
y_i^* = \bar{y}_i + M_i^{-\frac{2}{n-2\sigma}}\eta_i.
\end{equation*}
Since
\begin{equation*}
\frac{\partial}{\partial t}\big(v(ty_i^*)\,t^{\frac{n-2\sigma}{2}}\big)\leq 0 ~~~~ \text{and} ~~~~
\frac{\partial}{\partial t}\big(v(ty_i^*)\big)\Big|_{t=1}=0,
\end{equation*}
Leibniz' rule yields
\begin{equation*}
v(y_i^*) \leq 0,
\end{equation*}
a contradiction.
\end{proof}

\begin{lemma}\label{lemma 3.3}
Assume that $n\ge 2$ and $0< \sigma <n/2$. Let $u$ and $K$ be given as in Theorem \ref{thm Lin}. If $u$ is a positive solution and $K(\infty)\in(0,+\infty)$, then
$\lim\limits_{|x|\to\infty} |x|^{\frac{n-2\sigma}{2}} u(x)$ exists.
\end{lemma}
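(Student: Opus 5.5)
By Lemma \ref{lemma 3.1}, $W(x)=|x|^{\frac{n-2\sigma}{2}}u(x)$ is nondecreasing along each ray $\{t\xi: t\ge 0\}$. By Lemma \ref{lemma 3.2}, $W$ is uniformly bounded. Hence for every $\xi\in\mathbb{S}^{n-1}$ the limit
\[
\ell(\xi):=\lim_{t\to\infty}W(t\xi)=\sup_{t>0}W(t\xi)
\]
exists and is finite. Moreover $\ell(\xi)\ge \min_{\partial B_1}u>0$. What remains is to show that $\ell$ does not depend on $\xi$ and that the convergence is uniform in $\xi$. Then $\lim_{|x|\to\infty}W(x)$ exists.

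I would work in logarithmic cylindrical coordinates. For $x=e^{s}\theta$ set $w(s,\theta):=W(e^s\theta)$. Then $w$ is bounded, positive, and nondecreasing in $s$. Consider the translates $w_k(s,\theta):=w(s+s_k,\theta)$ with $s_k\to\infty$. Equivalently, rescale $u_k(x):=R_k^{\frac{n-2\sigma}{2}}u(R_kx)$ with $R_k=e^{s_k}$. Each $u_k$ satisfies the integral equation with coefficient $K(R_k\cdot)$. Since $K(R_k x)\to K(\infty)$ locally uniformly on $\mathbb{R}^n\setminus\{0\}$ by the hypothesis of Theorem \ref{thm Lin}, the limit equation is
\[
U(x)=\int_{\mathbb{R}^n}\frac{K(\infty)U^\tau(y)}{|x-y|^{n-2\sigma}}\,dy,\qquad x\neq0.
\]
The bound $u_k(x)\le C|x|^{-\frac{n-2\sigma}{2}}$ from Lemma \ref{lemma 3.2} gives local uniform bounds on $u_k$ on compact subsets of $\mathbb{R}^n\setminus\{0\}$. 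It also controls the tails of the integral: $u_k^\tau\lesssim|y|^{-\frac{n+2\sigma}{2}}$ is integrable against the kernel both near $0$ and near $\infty$. The contribution of $B_{1/R_k}$ vanishes in the limit. Hölder estimates for the integral equation, as in Jin--Li--Xiong, then give $C^\alpha_{\rm loc}(\mathbb{R}^n\setminus\{0\})$ compactness. Along a subsequence, $u_k\to U$, a singular solution of the limit equation with $|x|^{\frac{n-2\sigma}{2}}U$ bounded.

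Because $w$ is monotone in $s$ and bounded, $w_k(s,\theta)\to\ell(\theta)$ pointwise for every fixed $s$. Thus the limit is independent of the subsequence and $|x|^{\frac{n-2\sigma}{2}}U(x)=\ell(x/|x|)$, so $U$ is homogeneous of degree $-\frac{n-2\sigma}{2}$. By the $C^\alpha$ convergence on the annulus $\{1\le|x|\le2\}$, $\ell$ is continuous, and by Dini's theorem the monotone convergence $w(s,\cdot)\uparrow\ell$ is uniform on $\mathbb{S}^{n-1}$.

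The main obstacle is proving that a homogeneous singular solution $U(x)=|x|^{-\frac{n-2\sigma}{2}}\ell(\theta)$ has constant $\ell$, without classification results for singular solutions. I would first try the method of moving spheres in integral form applied to $U$ about points $x_0\neq0$. Since $K(\infty)$ is constant, the limit equation is translation invariant. Running the argument of Lemma \ref{lemma 3.1} centered at $x_0$ shows that $U$ is radially symmetric about the line through $0$ and $x_0$; the singular point $0$ plays the role of a point where $U=+\infty$, so the spheres can be moved up to it. Since $x_0$ is arbitrary, $U$ is radially symmetric about $0$, hence $\ell\equiv$ const. An alternative route, if moving spheres on $U$ is delicate because the spheres cross the singularity, is the moving plane method for $U$ in directions orthogonal to $x_0$ (Caffarelli--Gidas--Spruck style, integral form as in Section \ref{radially}), which yields symmetry of $U$ in every hyperplane through $0$. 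Once $\ell$ is constant, the uniform convergence above gives $\lim_{|x|\to\infty}|x|^{\frac{n-2\sigma}{2}}u(x)=\ell$.
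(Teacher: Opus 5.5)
Your proposal is correct in outline but takes a different route from the paper, although the nontrivial input is the same: radial symmetry of the singular blow-down limit $U$ about the origin.

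\textbf{The paper's route.} It extracts a subsequential limit $U$ by the compactness used in Lemma~\ref{lemma 3.2}, and quotes Chen--Li--Ou \cite[Theorem~2]{chen2003qualitative} for radial symmetry of singular solutions of the limit equation. It then passes to the limit in the one-dimensional convolution equation for the bounded nondecreasing profile $V(t)=r^{\frac{n-2\sigma}{2}}U(r)$. This gives $A_1=A_2=(K(\infty)C(n,\sigma))^{-1/(\tau-1)}$, so every blow-down is the same explicit function.

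\textbf{Your route.} You note that monotonicity along rays makes the whole sequence $u_k$ converge pointwise to the homogeneous function $|x|^{-\frac{n-2\sigma}{2}}\ell(x/|x|)$, with no subsequence needed. Dominated convergence with $u_k\le C|x|^{-\frac{n-2\sigma}{2}}$ then gives the limit equation, so even the $C^\alpha$ compactness is optional. Once $U$ is radial, $\ell$ is constant and Dini gives uniform convergence. The paper's $A_1=A_2$ step becomes superfluous, and you only need symmetry for homogeneous solutions. The paper's route gives the value of the limit directly; you recover it by substituting $\ell|x|^{-\frac{n-2\sigma}{2}}$ into the limit equation. Note that the paper does not prove the symmetry but cites it.

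\textbf{Your sketch of the symmetry step is imprecise.} Moving spheres centred at a single $x_0\neq0$ do not give symmetry about the line through $0$ and $x_0$. What they give is $U\le U_{x_0,\lambda}$ in $B_\lambda(x_0)\setminus\{x_0\}$ for every $0<\lambda<|x_0|$.
\begin{itemize}
\item The spheres cannot stop at some $\lambda_0<|x_0|$. Equality $U\equiv U_{x_0,\lambda_0}$ would force $U_{x_0,\lambda_0}$ to be bounded near the image of the origin, an interior point of $B_{\lambda_0}(x_0)$. This contradicts $U\to\infty$ at $0$, which holds because $\ell\ge\min_{\partial B_1}u>0$.
\item The HLS step works because $U$ is bounded on $B_\lambda(x_0)$ for $\lambda<|x_0|$.
\end{itemize}
Radial symmetry then comes from using all centres. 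Take $x_0=Le$ and let $\lambda\uparrow L$, $L\to\infty$. The inversion converges to reflection in $\{x\cdot e=0\}$, giving $U(x)\le U(x-2(x\cdot e)e)$ for $x\cdot e>0$. Applying this with $\pm e$ for every unit vector $e$ gives reflection symmetry in every hyperplane through $0$.

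Axial symmetry about the line through $0$ and $x_0$ is what your second, Caffarelli--Gidas--Spruck route produces: a Kelvin transform centred at $x_0$, then moving planes with normals orthogonal to $x_0$. The Kelvin transform is genuinely needed there. Since $U^{\tau+1}\sim|x|^{-n}$ is not integrable on half-spaces near infinity, the HLS argument cannot be run on $U$ itself.
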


\begin{proof}
Let $x_i \to \infty$ be any sequence and define
\begin{equation*}
u_i(y) := |x_i|^{\frac{n-2\sigma}{2}} u(|x_i|y).
\end{equation*}
By Lemma \ref{lemma 3.2},
\begin{equation*}
u_i(y) \leq C |y|^{-\frac{n-2\sigma}{2}} \qquad \text{for some constant $C$}.
\end{equation*}
Using the same process as in Lemma \ref{lemma 3.2}, $u_i$ converges uniformly (up to a subsequence)
to a function $U(y)$ in any compact subset of $\mathbb{R}^n\setminus\{0\}$, where $U$ is a positive solution of
\begin{equation*}
U(x) = \int_{\mathbb{R}^n} \frac{K(\infty)\, U^\tau(y)}{|x-y|^{\,n-2\sigma}}\,dy,
\qquad x \in \mathbb{R}^n\setminus\{0\}.
\end{equation*}
By Chen-Li-Ou \cite[Theorem~2]{chen2003qualitative}, $U$ is radially symmetric about the origin. Define
\begin{equation*}
V(t) := r^{\frac{n-2\sigma}{2}} U(r), \qquad r=|x|,\; t=\ln r,\; \theta=\frac{x}{|x|}.
\end{equation*}
Then $V$ satisfies (see Jin-Xiong \cite{jin_asymptotic_2021})
\begin{equation*}
V(t) = K(\infty) \int_{-\infty}^{\infty} J(s) V^\tau(t-s)\,ds,
\end{equation*}
with kernel
\begin{equation*}
J(t) := \int_{\mathbb{S}^{n-1}} \frac{d\xi}{(e^t+e^{-t}-2\theta\cdot\xi)^{\frac{n-2\sigma}{2}}},
\end{equation*}
and
\begin{equation*}
    \int_{-\infty}^{\infty} J(t)\,dt = C(n,\sigma)<\infty.
\end{equation*}
By Lemmas \ref{lemma 3.1} and \ref{lemma 3.2}, $V(t)$ is bounded and nondecreasing, so the limits
\begin{equation*}
A_1 := \lim_{t\to -\infty} V(t) \quad  \textmd{ and } \quad  A_2 := \lim_{t\to +\infty} V(t)
\end{equation*}
exist. By the dominated convergence theorem,
\begin{equation*}
A_1 = K(\infty) C(n,\sigma) A_1^\tau,
\qquad
A_2 = K(\infty) C(n,\sigma) A_2^\tau.
\end{equation*}
Since $A_1, A_2 > 0$, we obtain
\begin{equation*}
A_1 = A_2 = \big( K(\infty) C(n,\sigma) \big)^{-\frac{1}{\tau-1}}.
\end{equation*}
Consequently,
\begin{equation*}
\lim_{|x|\to\infty} |x|^{\frac{n-2\sigma}{2}} u(x)
= \big( K(\infty) C(n,\sigma) \big)^{-\frac{n-2\sigma}{4\sigma}}.
\end{equation*}
Lemma \ref{lemma 3.3} is established.
\end{proof}

\begin{lemma}\label{lemma 3.4}
Let $u$ and $K$ be given as in Theorem \ref{thm Lin}. If $K(\infty)\in(0,+\infty)$ and $\sigma = m \in (0, n/2)$ is a positive integer, then \eqref{sobolev eq 7} admits no positive solution.
\end{lemma}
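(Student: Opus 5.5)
We argue by contradiction: assume $u>0$ solves \eqref{sobolev eq 7} with $\sigma=m$ an integer and $K(\infty)\in(0,+\infty)$, $K\not\equiv$ const. By Theorem \ref{thm equivalence}, Lemma \ref{lemma 3.2} and Lemma \ref{lemma 3.3}, $u$ is given by the Riesz potential and
\[
|x|^{\frac{n-2m}{2}}u(x)\to A:=\big(K(\infty)C(n,m)\big)^{-\frac{n-2m}{4m}}\quad\text{as }|x|\to\infty.
\]
The strategy is to apply the Pohozaev identity for the polyharmonic operator on balls $B_R$ and let $R\to\infty$. This identity is the integer-order version, as in Guo--Peng--Yan \cite{guo2015existencelocaluniquenessbubbling}, written through the iterated Laplacians $u_k=(-\Delta)^k u$, $k=0,\dots,m-1$. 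It gives
\[
\frac{n-2m}{2n}\int_{B_R}(x\cdot\nabla K)\,u^{\frac{2n}{n-2m}}\,dx = \mathcal{B}_R(u)-\frac{n-2m}{2n}\,R\int_{\partial B_R}K\,u^{\frac{2n}{n-2m}}\,dS,
\]
where $\mathcal{B}_R(u)$ is a sum of boundary integrals of bilinear expressions in $u_k$, $\nabla u_k$ and $x\cdot\nabla u_k$. Since $K$ is nondecreasing along rays, $x\cdot\nabla K\ge0$. Moreover, the left side is bounded below by a positive constant for $R$ large, because $K$ is nonconstant; if it were identically zero, $K$ would be constant along rays and then constant, since $K\in C^1$ near $0$.

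Next I would compute the limit of the right side as $R\to\infty$. Here I need that $u_k$ and its gradient behave like the corresponding derivatives of the profile $A|x|^{-\frac{n-2m}{2}}$. I would obtain this from the integral representation by differentiating under the integral:
\[
u_k(x)=c_k\int\frac{K(y)u^\tau(y)}{|x-y|^{n-2m+2k}}\,dy.
\]
Then I rescale as in Lemma \ref{lemma 3.3}, with $u_R(y)=R^{\frac{n-2m}{2}}u(Ry)$. Since $K(R\,\cdot)u_R^\tau\to K(\infty)A^\tau|y|^{-\frac{n+2m}{2}}$ locally uniformly away from $0$, with uniform domination by Lemma \ref{lemma 3.2}, the quantities $R^{\frac{n-2m}{2}+2k}u_k(Ry)$ and their gradients converge on $\partial B_1$ to those of $U_\infty=A|y|^{-\frac{n-2m}{2}}$. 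Because all boundary terms are scale invariant, the right side converges to the Pohozaev boundary integral $P(U_\infty)$ of the singular profile on $\partial B_1$.

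The main obstacle is to compute $P(U_\infty)$ explicitly and show it is zero. For powers $|x|^{-\beta}$ one has $(-\Delta)^k|x|^{-\beta}=c_k(\beta)|x|^{-\beta-2k}$ with explicit products $c_k(\beta)=\prod_{j=0}^{k-1}(\beta+2j)(n-2-\beta-2j)$. Hence each boundary term reduces to an explicit multiple of $|\mathbb{S}^{n-1}|$. I would organize the sum through the iterative structure of the identity, which peels off one Laplacian at a time, and aim for a telescoping or symmetric expression. The key point is that $\beta=\frac{n-2m}{2}$ is the symmetric exponent for which $c_m(\beta)$ pairs $\beta+2j$ with $n-2-\beta-2(m-1-j)$. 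I expect this symmetry to make the gradient terms cancel against the potential term. The bound $A^{\tau-1}K(\infty)=c_m(\beta)$ is equivalent to the limit equation in Lemma \ref{lemma 3.3}, and it balances the term $R\int_{\partial B_R} K u^{2n/(n-2m)}$.

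If the cancellation holds, then $P(U_\infty)=0$, which contradicts the left side being bounded below by a positive constant. As an alternative check on the computation, I would verify $P(U_\infty)=0$ directly via the Emden--Fowler variable $t=\ln r$. In that variable $U_\infty$ is a constant solution of an autonomous ODE with a conserved Hamiltonian, and the Pohozaev boundary term equals the Hamiltonian, which vanishes on constant solutions only after the correct normalization. I would carry out this step with particular care.
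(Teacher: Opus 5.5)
\textbf{Where the proposal fails.} Your overall architecture is the paper's. You use the Guo--Peng--Yan Pohozaev identity on $B_R$ and the asymptotics $u\sim A|x|^{-\frac{n-2m}{2}}$ with derivatives. You replace $u$ by $u_0=A|x|^{-\frac{n-2m}{2}}$ in the boundary terms up to $o(1)$, and close with a sign contradiction. Getting the derivative asymptotics by differentiating the Riesz potential and rescaling, with domination from Lemma \ref{lemma 3.2}, is a sound and more self-contained substitute for the paper's appeal to Jin--Li--Xiong.

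The gap is in the step you call the main obstacle. The limiting boundary quantity is \emph{not} zero, and the cancellation you expect does not occur. The symmetry of $\beta=\frac{n-2m}{2}$ only gives $c_m(\beta)=\prod_{i=0}^{m-1}\big(\frac{n-2m+4i}{2}\big)^2$. Since $K(\infty)A^{\tau-1}=c_m(\beta)$, the potential term contributes only $\frac{n-2m}{2n}c_m(\beta)A^2|\mathbb{S}^{n-1}|$. The derivative terms contribute the full $-\frac12 c_m(\beta)A^2|\mathbb{S}^{n-1}|$; this is the paper's appendix evaluation of $\int_{\partial B_r}g_m(u_0,u_0)$. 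Hence
\[
\frac{n-2m}{2n}R\int_{\partial B_R}K(\infty)u_0^{\frac{2n}{n-2m}}-\frac12\int_{\partial B_R}g_m(u_0,u_0)=-\frac{m}{n}\,c_m(\beta)\,A^2\,|\mathbb{S}^{n-1}|<0 .
\]

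\textbf{Check at $m=1$.} Take $s=\frac{n-2}{2}$. The potential term is $\frac{n-2}{2n}s^2A^2|\mathbb{S}^{n-1}|$. The gradient terms give
\[
\int_{\partial B_R}\Big[R(\partial_ru_0)^2-\tfrac R2|\nabla u_0|^2+\tfrac{n-2}{2}u_0\partial_ru_0\Big]=-\tfrac{s^2}{2}A^2|\mathbb{S}^{n-1}|,
\]
so the total is $-\frac{s^2}{n}A^2|\mathbb{S}^{n-1}|$.

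Your Emden--Fowler check would expose the error rather than confirm your claim. With $v(t)=e^{st}u(e^t)$, the boundary term equals $|\mathbb{S}^{n-1}|H$, where
\[
H=\tfrac12\dot v^2-\tfrac{s^2}{2}v^2+\tfrac{n-2}{2n}K(\infty)v^{\frac{2n}{n-2}} .
\]
At $v\equiv A$ this gives $H=-\frac{s^2}{n}A^2$. The normalization is fixed by the identity itself, not a free choice. $H=0$ only on the homoclinic (bubble) orbit. A nonzero Pohozaev invariant is exactly what distinguishes the cylindrical and Delaunay-type singular solutions from bubbles.

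\textbf{How to repair it.} The argument closes only through the \emph{strict sign} of this limit, as in the paper. The left side is $\ge 0$ for every $R$, while the right side tends to a negative number. You therefore do not even need a positive lower bound on the left side.

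Because everything hinges on signs, note also that your displayed identity has the $K$-boundary term with the wrong sign. With your left-hand side, the correct identity is the paper's \eqref{sobolev eq 9} divided by $2$:
\[
\frac{n-2m}{2n}\int_{B_R}(x\cdot\nabla K)\,u^{\frac{2n}{n-2m}}=\frac{n-2m}{2n}R\int_{\partial B_R}K\,u^{\frac{2n}{n-2m}}-\frac12\int_{\partial B_R}g_m(u,u).
\]
With these two corrections your proof coincides with the paper's.
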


\begin{proof}
Suppose $u$ is a positive solution. By Guo-Peng-Yan  \cite[(3.2) and (3.9)]{guo2015existencelocaluniquenessbubbling}, the Pohozaev identity reads
\begin{equation}\label{eq 4.9}
    2\int_{B_r}(x\cdot \nabla u)(-\Delta)^m u = \int_{\partial B_r} g_m(u,u) - (n-2m) \int_{B_r} u(-\Delta)^m u.
\end{equation}
 $g_m(u,v)$ has the following form:
\begin{equation*}
g_m(u,v)
=
\sum_{j=1}^{2m-1} l_j\!\left(y-x,\nabla^{j}u,\nabla^{2m-j}v\right)
\;+\;
\sum_{j=0}^{2m-1} \tilde{l}_j\!\left(\nabla^{j}u,\nabla^{2m-j-1}v\right),
\end{equation*}
where $l_j\!\left(y-x,\nabla^{j}u,\nabla^{2m-j}v\right)$ and
$\tilde{l}_j\!\left(\nabla^{j}u,\nabla^{2m-j-1}v\right)$
are linear in each component.

Since $u$ solves $(-\Delta)^m u(x) = K(x) u^{\frac{n+2m}{n-2m}}(x)$, an integration by parts yields
\begin{equation}\label{sobolev eq 9}
    \frac{n-2m}{n} \int_{B_r}(x\cdot \nabla K) u^{\frac{2n}{n-2m}}
    = \frac{n-2m}{n} r \int_{\partial B_r} K u^{\frac{2n}{n-2m}} - \int_{\partial B_r} g_m(u,u)
\end{equation}
for any $r>0$. Denote the right-hand side as $P(m,r,u)$:
\begin{equation}\label{P}
    P(m,r,u) = \frac{n-2m}{n} r \int_{\partial B_r} K u^{\frac{2n}{n-2m}} - \int_{\partial B_r} g_m(u,u).
\end{equation}
The monotonicity of $K$ ensures that the left-hand side of \eqref{sobolev eq 9} is positive for any $r>0$. Hence, we only need to verify that
\begin{equation*}\label{eq 4.6}
    P(m,r,u) \le 0 \quad \text{for $r$ large},
\end{equation*}
and then reach a contradiction.
By Lemma \ref{lemma 3.3}, we have
\begin{equation*}
    u(x)=M_0|x|^{-\frac{n-2m}{2}}(1+o(1))\qquad\text{for $|x|$ large.}
\end{equation*}
By Jin-Li-Xiong \cite[Theorems 2.4 and 2.5]{jin-nirenberg-2017} and standard regularity arguments, one derives that
\begin{equation*}
    D^\beta u(x)=M_0D^\beta (|x|^{-\frac{n-2m}{2}})(1+o(1))\qquad\text{for $|x|$ large,}
\end{equation*}
where $\beta$ is the multi-index. Denote $u_0(x)=M_0|x|^{-\frac{n-2m}{2}}$. Then
\begin{equation}\label{eq 3.4}
    P(m,r,u)= P(m,r,u_0) +o(1)\qquad \text{as }r\to \infty.
\end{equation}
We only need to verify that
\begin{equation*}
     P(m,r,u_0)<0\qquad \text{for any }r>0.
\end{equation*}
For the first term in $P(m,r,u_0)$, one has
\begin{equation}\label{P1}
    \frac{n-2m}{n} r \int_{\partial B_r} (M_0|x|^{-\frac{n-2m}{2}})^{\frac{2n}{n-2m}}K(x)\, dx
    \leq \frac{n-2m}{n} M_0^{\frac{2n}{n-2m}} |\mathbb{S}^{n-1}| K(\infty).
\end{equation}
By the calculation in Appendix \ref{sec gm}, we have
\begin{equation}\label{gm}
    \int_{\partial B_r} g_m(u_0,u_0)
    = |\mathbb{S}^{n-1}| M_0^2 \prod_{i=0}^{m-1}\Big(\frac{n-2m+4i}{2}\Big)^2.
\end{equation}
Since $u_0$ solves $(-\Delta)^m u_0 = K(\infty) u_0^{\frac{n+2m}{n-2m}}$ with $K(\infty) = \lim\limits_{|x|\to\infty} K(x)$, we have
\begin{equation}\label{m0}
    M_0^{\frac{4m}{n-2m}} = {\prod_{i=0}^{m-1}\big(\frac{n-2m+4i}{2}\big)^2 }/{K(\infty)}.
\end{equation}
Combining \eqref{P}-\eqref{m0}, we obtain
\begin{equation*}
\begin{aligned}
    P(m,r,u) &=P(m,r,u_0)+o(1)\\
   & \leq M_0^2 |\mathbb{S}^{n-1}| \Big( \frac{n-2m}{n} M_0^{\frac{4m}{n-2m}}  K(\infty) - \prod_{i=0}^{m-1}\big(\frac{n-2m+4i}{2}\big)^2 \Big)+o(1) \\
    &= M_0^2 |\mathbb{S}^{n-1}| \prod_{i=0}^{m-1}\Big(\frac{n-2m+4i}{2}\Big)^2 \Big( \frac{n-2m}{n} - 1 \Big) +o(1)\\
    &\le 0, \quad \text{for $r$ sufficiently large}.
\end{aligned}
\end{equation*}
Thus we reach a contradiction.
\end{proof}

\begin{lemma}\label{lemma 3.5}
Let $u$ and $K$ be given as in Theorem \ref{thm Lin}. If $K(\infty)\in(0,+\infty)$ and $\sigma \in (0,1)$, then \eqref{sobolev eq 7} admits no positive solution.
\end{lemma}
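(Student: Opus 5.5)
We argue by contradiction, in parallel with Lemma \ref{lemma 3.4}, with the local Pohozaev identity replaced by one adapted to the nonlocal operator. Suppose $u$ is a positive solution. We use the Caffarelli--Silvestre extension $U(x,t)$ of $u$ to $\mathbb{R}^{n+1}_+$, which satisfies $\operatorname{div}(t^{1-2\sigma}\nabla U)=0$ and $-\kappa_\sigma\lim_{t\to0}t^{1-2\sigma}\partial_tU=K u^{\tau}$ with $\tau=\frac{n+2\sigma}{n-2\sigma}$. The Pohozaev identity on the half ball $\mathcal{B}_r^+$ then reads
\begin{equation*}
\frac{n-2\sigma}{2n}\int_{B_r}(x\cdot\nabla K)u^{\frac{2n}{n-2\sigma}}
= \frac{n-2\sigma}{2n}\,r\int_{\partial B_r}Ku^{\frac{2n}{n-2\sigma}}
+\kappa_\sigma\int_{\partial''\mathcal{B}_r^+}t^{1-2\sigma}\Big(\frac r2|\nabla U|^2-r|\partial_\nu U|^2-\frac{n-2\sigma}{2}U\partial_\nu U\Big).
\end{equation*}
Denote the right-hand side by $P(\sigma,r,u)$. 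The left side is positive, because $K$ is nondecreasing along rays and not constant, and it is nondecreasing in $r$. It therefore suffices to show that $\limsup_{r\to\infty}P(\sigma,r,u)\le0$.

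By Lemma \ref{lemma 3.3}, $u(x)=M_0|x|^{-\frac{n-2\sigma}{2}}(1+o(1))$, where $M_0^{\tau-1}K(\infty)C(n,\sigma)=1$. We rescale $U_r(X)=r^{\frac{n-2\sigma}{2}}U(rX)$. By Lemma \ref{lemma 3.2} and the proof of Lemma \ref{lemma 3.3}, $u_r\to u_0:=M_0|x|^{-\frac{n-2\sigma}{2}}$ locally uniformly on $\mathbb{R}^n\setminus\{0\}$, with uniform bound $u_r\le C|x|^{-\frac{n-2\sigma}{2}}$. We then use the Poisson kernel representation of $U_r$ together with the weighted regularity estimates for the extension (Jin--Li--Xiong) to get $U_r\to U_0$ and $\nabla U_r\to\nabla U_0$ uniformly on the unit hemisphere $\partial''\mathcal{B}_1^+$, including up to $t=0$ in the weighted sense. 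Here $U_0$ is the extension of $u_0$, namely $U_0(X)=M_0|X|^{-\frac{n-2\sigma}{2}}\Phi(t/|X|)$ for an explicit profile $\Phi$. Since $P$ is scale invariant, $P(\sigma,r,u)=P(\sigma,1,u_r)\to P(\sigma,1,u_0)$.

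The remaining work is to show $P(\sigma,1,u_0)<0$, and this is the main obstacle. The boundary term is
\begin{equation*}
\frac{n-2\sigma}{2n}K(\infty)M_0^{\frac{2n}{n-2\sigma}}|\mathbb{S}^{n-1}|
=\frac{n-2\sigma}{2n}\frac{M_0^2}{C(n,\sigma)}|\mathbb{S}^{n-1}|.
\end{equation*}
The hemisphere integral is homogeneous of degree $0$, so it equals $M_0^2$ times an explicit constant $A(n,\sigma)$ computed from $\Phi$. The profile $\Phi$ is expressible via hypergeometric functions, since $|X|^{-\frac{n-2\sigma}{2}}\Phi$ is the $\sigma$-harmonic extension of a homogeneous function. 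We will compute $A(n,\sigma)$ through ${}_2F_1$ identities (Gauss summation, derivative formulas), and we will obtain $C(n,\sigma)=\int J$ as an explicit ratio of Gamma functions, namely $\lambda(n,\sigma)^{-1}$ with $(-\Delta)^\sigma|x|^{-\frac{n-2\sigma}{2}}=\lambda(n,\sigma)|x|^{-\frac{n+2\sigma}{2}}$ and $\lambda=2^{2\sigma}\Gamma(\frac{n+2\sigma}{4})^2/\Gamma(\frac{n-2\sigma}{4})^2$. The target is the analogue of the integer case, $A(n,\sigma)=-\lambda(n,\sigma)|\mathbb{S}^{n-1}|$ up to sign conventions. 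With this, $P(\sigma,1,u_0)=M_0^2\lambda|\mathbb{S}^{n-1}|(\frac{n-2\sigma}{2n}-\frac12)<0$.

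As a fallback, we can avoid the explicit hemisphere computation. We apply the same identity to the exact singular solution $u_0$ of $(-\Delta)^\sigma u_0=K(\infty)u_0^\tau$ on the annulus $B_R\setminus B_\rho$. The interior identity there has vanishing left side. This gives $P(\sigma,R,u_0)=P(\sigma,\rho,u_0)$, and the common value is determined by the energy on the annulus via the equation, yielding a sign of $-\frac{2\sigma}{n}\lambda M_0^2|\mathbb{S}^{n-1}|$ times a positive factor. Either way, $\limsup_{r\to\infty} P(\sigma,r,u)<0$, which contradicts the positivity of the left side.
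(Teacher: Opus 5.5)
Your outline matches the paper's: extension, Pohozaev identity on half-balls, blow-down to $u_0=M_0|x|^{-\frac{n-2\sigma}{2}}$, and evaluation on the homogeneous profile. However, the one step that decides the contradiction is not established, and as written it fails. The hemisphere bracket in your identity has the wrong overall sign. Multiplying $\operatorname{div}(t^{1-2\sigma}\nabla U)=0$ by $X\cdot\nabla U$ and by $U$ gives
\begin{equation*}
\frac{n-2\sigma}{2n}\int_{B_r}(x\cdot\nabla K)u^{\frac{2n}{n-2\sigma}}
=\frac{n-2\sigma}{2n}\,r\int_{\partial B_r}Ku^{\frac{2n}{n-2\sigma}}
+\kappa_\sigma\int_{\partial''\mathcal{B}_r^+}t^{1-2\sigma}\Big(r|\partial_\nu U|^2+\frac{n-2\sigma}{2}U\partial_\nu U-\frac r2|\nabla U|^2\Big),
\end{equation*}
which agrees with the paper's \eqref{eq 2.4} since $N_{n,\sigma}=\kappa_\sigma^{-1}$.

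For $U_0$, homogeneity gives $\partial_\nu U_0=-\frac{n-2\sigma}{2r}U_0$ on $\partial''\mathcal{B}_r^+$. So the first two terms cancel, and only $-\frac{\kappa_\sigma r}{2}\int_{\partial''\mathcal{B}_r^+}t^{1-2\sigma}|\nabla U_0|^2$ survives. With your sign this term is positive, so $P(\sigma,1,u_0)>0$ and there is no contradiction. Your phrase ``up to sign conventions'' hides exactly this decisive point. Your stated target $A(n,\sigma)=-\lambda|\mathbb{S}^{n-1}|$ is also inconsistent with your final formula, which requires $A=-\frac12\lambda|\mathbb{S}^{n-1}|$. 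Finally, the fallback as literally phrased is vacuous: $P(\sigma,R,u_0)=P(\sigma,\rho,u_0)$ is automatic from scale invariance and determines nothing.

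What closes the gap is the energy identity you allude to: test the extension equation against $U_0$ itself on $\mathcal{A}=\mathcal{B}_R^+\setminus\overline{\mathcal{B}_\rho^+}$,
\begin{equation*}
\int_{\mathcal{A}}t^{1-2\sigma}|\nabla U_0|^2=\int_{\partial\mathcal{A}}t^{1-2\sigma}U_0\,\partial_\nu U_0 .
\end{equation*}
By homogeneity, $E:=s\int_{\partial''\mathcal{B}_s^+}t^{1-2\sigma}|\nabla U_0|^2$ and $s^{-1}\int_{\partial''\mathcal{B}_s^+}t^{1-2\sigma}U_0^2$ do not depend on $s$. Hence:
\begin{itemize}
\item the left side equals $E\log(R/\rho)$;
\item the two hemispherical boundary terms cancel;
\item the flat part contributes $\kappa_\sigma^{-1}K(\infty)\int_{B_R\setminus B_\rho}u_0^{\frac{2n}{n-2\sigma}}=\kappa_\sigma^{-1}K(\infty)M_0^{\frac{2n}{n-2\sigma}}|\mathbb{S}^{n-1}|\log(R/\rho)$.
\end{itemize}
Thus $\kappa_\sigma E=K(\infty)M_0^{\frac{2n}{n-2\sigma}}|\mathbb{S}^{n-1}|$, and
\begin{equation*}
P(\sigma,1,u_0)=\Big(\frac{n-2\sigma}{2n}-\frac12\Big)K(\infty)M_0^{\frac{2n}{n-2\sigma}}|\mathbb{S}^{n-1}|=-\frac{\sigma}{n}K(\infty)M_0^{\frac{2n}{n-2\sigma}}|\mathbb{S}^{n-1}|<0 .
\end{equation*}

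This is exactly the value the paper reaches, multiplied by $N_{n,\sigma}$. The paper gets there by writing $U_0$ through ${}_2F_1$, integrating the hypergeometric equation against $F$ with Gauss's summation at $s=1$, and computing $N_{n,\sigma}$ separately. The energy route replaces that whole appendix computation. It needs none of $\lambda$, $C_0$, $N_{n,\sigma}$ explicitly, only the homogeneity of $U_0$ and the fact that $u_0$ solves the limiting equation away from $0$. Once you write this out with the corrected sign, your argument is complete. Your rescaling justification of $P(\sigma,r,u)\to P(\sigma,1,u_0)$ is also more detailed than the paper's ``$+o(1)$''.
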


\begin{proof} Suppose $u$ is a positive solution. Let
\begin{equation*}
\mathcal{P}_\sigma(x,t) = \beta(n,\sigma) \frac{t^{2\sigma}}{(|x|^2+t^2)^{\frac{n+2\sigma}{2}}}, \quad (x,t) \in \mathbb{R}_+^{n+1},
\end{equation*}
where $\beta(n,\sigma)$ is chosen so that
\begin{equation*}
\beta(n,\sigma) \int_{\mathbb{R}^n} (|x|^2+1)^{-\frac{n+2\sigma}{2}} dx = 1.
\end{equation*}
Define
\begin{equation*}
U(x,t) = (\mathcal{P}_\sigma * u)(x,t) = \beta(n,\sigma) \int_{\mathbb{R}^n} \frac{t^{2\sigma}}{(|x-y|^2+t^2)^{\frac{n+2\sigma}{2}}} u(y) dy.
\end{equation*}
Let $b=1-2\sigma$, and
\begin{equation*}
\Delta_b := \Delta_{x,t} + \frac{b}{t} \partial_t = t^{-b} \operatorname{div}(t^b \nabla).
\end{equation*}
Then it is well-known that
\begin{equation}\label{extension 1}
\begin{aligned}
U(x,0) &=u(x),\\
\Delta_b U &= 0 \quad\text{ in } \mathbb{R}_+^{n+1}, \\
- \lim_{t\to 0} t^b \partial_t  U &= N_{n,\sigma} (-\Delta)^\sigma u(x),
\end{aligned}
\end{equation}
where $N_{n,\sigma}>0$ depends only on $n,\sigma$. The Pohozaev identity (e.g., Jin-Xiong \cite[Proposition  A.3]{jin_asymptotic_2021}) reads
\begin{equation}\label{pohozaev 2}
\begin{aligned}
\int_E \operatorname{div}(t^b \nabla  U) \langle X, \nabla U \rangle
&= -\frac{n-2\sigma}{2} \int_E \operatorname{div}(t^b \nabla  U) U + \frac{n-2\sigma}{2} \int_{\partial E} t^b \partial_\nu  U U\\
&+ \int_{\partial E} t^b \partial_\nu  U \langle X,\nabla U\rangle
+ \int_{\partial E} Q_0(U),
\end{aligned}
\end{equation}
where $\nu$ is the outward unit normal of $E$ and
\begin{equation}\label{q 1}
\int_{\partial E} Q_0(U) := -\frac12 \int_{\partial E} t^b |\nabla U|^2 X^k \nu_k.
\end{equation}
Let $\mathcal{B}_R$ be the ball of radius $R$ centered at $0$ in $\mathbb{R}^{n+1}$, $\mathcal{B}_R^+ = \mathcal{B}_R \cap \mathbb{R}_+^{n+1}$, $\partial' \mathcal{B}_R$ the flat part (i.e. $B_R$ in $\mathbb{R}^n$) of $\partial\mathcal{B}_R^+ $, and $\partial'' \mathcal{B}_R = \partial \mathcal{B}_R \cap \{t>0\}$. Since $\operatorname{div}(t^b \nabla  U) = t^b \Delta_bU=0$, we have
\begin{equation*}
\int_E \operatorname{div}(t^b \nabla  U) \langle X, \nabla U\rangle = 0 \quad \text{and }-\frac{n-2\sigma}{2} \int_E \operatorname{div}(t^b \nabla  U) U = 0.
\end{equation*}
Using \eqref{extension 1} and \eqref{q 1}, one verifies
\begin{equation*}
\int_{\partial' \mathcal{B}_R} Q_0(U) = 0.
\end{equation*}
Thus \eqref{pohozaev 2} reduces to
\begin{equation}\label{eq 2.3}
\frac{n-2\sigma}{2} \int_{\partial'\mathcal{B}_R \cup \partial''\mathcal{B}_R} t^b U\partial_\nu  U
+ \int_{\partial'\mathcal{B}_R \cup \partial''\mathcal{B}_R} t^b \partial_\nu  U \langle X, \nabla U \rangle
+ \int_{\partial'' \mathcal{B}_R} Q_0(U) = 0.
\end{equation}
Assume $u$ solves $(-\Delta)^\sigma u(x) = K(x) u^{\frac{n+2\sigma}{n-2\sigma}}(x)$. Note that $\partial_\nu U=-\partial_t U$ when $X\in \partial' \mathcal{B}_R$. By \eqref{extension 1},
\begin{equation}\label{eq 2.1}
\frac{n-2\sigma}{2} \int_{\partial' \mathcal{B}_R} t^b U\partial_\nu  U  = N_{n,\sigma} \frac{n-2\sigma}{2} \int_{B_R} K u^{\frac{2n}{n-2\sigma}}.
\end{equation}
Integration by parts yields
\begin{equation}\label{eq 2.2}
\begin{aligned}
\int_{\partial' \mathcal{B}_R} t^b \partial_\nu  U \langle X, \nabla U\rangle &= N_{n,\sigma}\Bigg( \frac{n-2\sigma}{2n} R \int_{\partial B_R} K u^{\frac{2n}{n-2\sigma}}\\
&- \frac{n-2\sigma}{2n}  \int_{B_R} u^{\frac{2n}{n-2\sigma}} (\nabla K \cdot x)
- \frac{n-2\sigma}{2} \int_{B_R} K u^{\frac{2n}{n-2\sigma}} \Bigg).
\end{aligned}
\end{equation}
Combining \eqref{eq 2.3}-\eqref{eq 2.2}, we get
\begin{equation}\label{eq 2.4}
\begin{aligned}
& N_{n,\sigma} \frac{n-2\sigma}{2n} \int_{B_R} u^{\frac{2n}{n-2\sigma}} (\nabla K \cdot x) = N_{n,\sigma} \frac{n-2\sigma}{2n} R \int_{\partial B_R} K u^{\frac{2n}{n-2\sigma}} \\
& \qquad +\left[ \frac{n-2\sigma}{2} \int_{\partial'' \mathcal{B}_R} t^b U \partial_\nu  U  \right. + \left.\int_{\partial'' \mathcal{B}_R} t^b \partial_\nu  U \langle X, \nabla U \rangle
+ \int_{\partial'' \mathcal{B}_R} Q_0(U)\right]
\end{aligned}
\end{equation}
for any $R>0$. Denote the right-hand side of \eqref{eq 2.4} as $P(\sigma,R,u)$.
The monotonicity of $K$ ensures that the left-hand side of \eqref{eq 2.4} is positive for any $R>0$. Hence, we only need to verify that
\begin{equation*}
    P(\sigma,R,u) \le 0 \quad \text{for $R$ large},
\end{equation*}
and then reach a contradiction.

By Lemma \ref{lemma 3.3}, we have
\begin{equation*}
    u(x) = M_0 |x|^{-\frac{n-2\sigma}{2}}(1+o(1)) \qquad \text{as } |x| \to \infty.
\end{equation*}
Denote $u_0(x)=M_0|x|^{-\frac{n-2\sigma}{2}}$ and $U_0(x,t) = (\mathcal{P}_\sigma * u_0)(x,t) $. Then
\begin{equation}\label{eq 3.5}
   P(\sigma,R,u)= P(\sigma,R,u_0) +o(1)\qquad \text{as }R\to \infty.
\end{equation}
We only need to verify that
\begin{equation*}
    P(\sigma,R,u_0)<0\qquad \text{for any }R>0.
\end{equation*}
By Lemma~\ref{appendix U},
\begin{equation*}
U_0(x,t) = \frac{\Gamma^2\!\left(\frac{n+2\sigma}{4}\right)}{\Gamma\!\left(\frac{n}{2}\right)\Gamma(\sigma)} M_0 (|x|^2+t^2)^{-\frac{n-2\sigma}{4}} {}_2F_1\!\left(\frac{n-2\sigma}{4},\frac{n-2\sigma}{4};\frac{n}{2};\frac{|x|^2}{|x|^2+t^2}\right).
\end{equation*}
Let
\begin{equation*}
r^2 = |x|^2 + t^2, \quad z = \frac{|x|^2}{r^2}, \quad F(z) = {}_2F_1\!\left(\frac{n-2\sigma}{4}, \frac{n-2\sigma}{4}; \frac{n}{2}; z\right)
\end{equation*}
and
\begin{equation}\label{eq C0}
    C_0 = \frac{\Gamma^2\!\left(\frac{n+2\sigma}{4}\right)}{\Gamma\!\left(\frac{n}{2}\right)\Gamma(\sigma)}.
\end{equation}
Then
\begin{equation*}
U_0(x,t) = C_0 M_0 r^{-\frac{n-2\sigma}{2}} F(z).
\end{equation*}
By Lemma \ref{reduction}, $P(\sigma,R,u_0)$ reduces to
\begin{equation}\label{eq 2.5}
\begin{aligned}
P(\sigma,R,u_0)
&=  N_{n,\sigma} \frac{n-2\sigma}{2n} R \int_{\partial B_R} K u_0^{\frac{2n}{n-2\sigma}}
+ \int_{\partial'' \mathcal{B}_R} Q_0(U_0).
\end{aligned}
\end{equation}
By \eqref{eq Q0} and \eqref{eq N} in Appendix \ref{CC}, we have
\begin{equation*}
\begin{aligned}
\setlength{\jot}{12pt}
P(\sigma,R,u) &= P(\sigma,R,u_0)+o(1)\\
&\leq \frac{n-2\sigma}{n} |\mathbb{S}^{n-1}| M_0^2 C_0 \frac{\Gamma(\frac{n}{2}) \Gamma(1-\sigma)}{\Gamma^2(\frac{n-2\sigma}{4})} -C_0^2 M_0^2 |\mathbb{S}^{n-1}| \frac{\Gamma^2(\frac{n}{2}) \Gamma(\sigma) \Gamma(1-\sigma)}{\Gamma^2(\frac{n+2\sigma}{4}) \Gamma^2(\frac{n-2\sigma}{4})}+o(1),\\
&= M_0^2 C_0 |\mathbb{S}^{n-1}| \frac{\Gamma(1-\sigma) \Gamma(\frac{n}{2})}{\Gamma^2(\frac{n-2\sigma}{4})}\left(\frac{n-2\sigma}{n} - 1\right)+o(1) \le 0\qquad \text{for $R$ large}.
\end{aligned}
\end{equation*}
The equality follows from \eqref{eq C0}.
Thus a contradiction is reached.
\end{proof}

Theorem \ref{thm Lin} follows directly from Lemmas \ref{lemma infty}, \ref{lemma 3.4} and \ref{lemma 3.5}.
\end{proof}

\section{\texorpdfstring{Liouville theorem for sign-changing symmetric $Q$-curvature}{Liouville theorem for sign-changing symmetric $Q$-curvature}}\label{sign-changing}

In this section, we prove Theorem \ref{thm:no-asymptotic} by the method of moving spheres in integral form. Throughout this section, set $\tau:=\frac{n+2\sigma}{n-2\sigma}$. Based on the integral representation, we first establish a decay estimate for the solution at infinity.

\begin{lemma}\label{lem:no-asymptotic-decay}
Under the assumptions of Theorem \ref{thm:no-asymptotic}, $u$ satisfies
\begin{equation}\label{eq:no-asymptotic-integral}
    u(x)=C_{n,\sigma}\int_{\mathbb R^n}
    \frac{K(y)u^\tau(y)}{|x-y|^{n-2\sigma}}\,dy,
    \quad x\in\mathbb R^n.
\end{equation}
Moreover, there exists $C>0$ such that
\begin{equation}\label{eq:natural-upper-decay}
    u(x)\le C|x|^{-(n-2\sigma)},
    \qquad |x|\ge 2.
\end{equation}
\end{lemma}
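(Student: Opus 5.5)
The integral representation \eqref{eq:no-asymptotic-integral} comes from Theorem \ref{thm equivalence} applied with $f(x,t)=K(x)t^{\tau}$. Assumption \ref{F} holds with $p=\tau>1$ and the given $\alpha>-2\sigma$, using \eqref{Qeun2608} for $|x|\ge R_0$. Local boundedness of $f$ follows from continuity of $K$. Since $u$ and $K$ are continuous, $Ku^\tau\in L^1_{\rm loc}$. Finally, $f(x,u(x))=K(x)u^\tau(x)\le 0$ for $|x|>1$, so the sign condition holds with $R_1=1$.

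For the decay \eqref{eq:natural-upper-decay}, I split the integral at $|y|=1$:
\[
u(x)=C_{n,\sigma}\int_{B_1}\frac{K u^\tau}{|x-y|^{n-2\sigma}}dy+C_{n,\sigma}\int_{B_1^c}\frac{Ku^\tau}{|x-y|^{n-2\sigma}}dy=:I(x)+II(x).
\]
Since $K\le 0$ on $B_1^c$, we have $II(x)\le 0$, so $u\le I$. On $B_1$, both $K$ and $u$ are continuous, hence bounded; this uses only $K(0)<\infty$ and continuity of $u$ on $\overline{B_1}$. For $|x|\ge 2$ and $|y|\le 1$ we have $|x-y|\ge |x|/2$. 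Hence
\[
u(x)\le I(x)\le C_{n,\sigma}2^{n-2\sigma}|x|^{-(n-2\sigma)}\int_{B_1}K u^\tau\,dy\le C|x|^{-(n-2\sigma)}.
\]

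The only step requiring care is verifying that Theorem \ref{thm equivalence} applies. In particular, $u\in\mathcal L_\sigma(\mathbb R^n)$ is given, so Propositions \ref{prop u lift} and \ref{prop v} ensure that the integral converges absolutely for every $x$. This absolute convergence justifies splitting the integral as above.

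Nonnegativity of $u$ also gives $II(x)>-\infty$ consistently. Combined with $II\le 0$ and $u\ge 0$, this yields, as a byproduct, $\int_{B_1^c}|K|u^\tau|x-y|^{2\sigma-n}\,dy\le I(x)$, which may be useful later. No genuine obstacle is expected beyond this bookkeeping.
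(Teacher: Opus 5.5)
Your proposal is correct and follows the paper's proof essentially verbatim: verify the hypotheses of Theorem \ref{thm equivalence} with $f(x,t)=K(x)t^{\tau}$, $p=\tau$, $R_1=1$, then discard the nonpositive contribution from $B_1^c$ and use $|x-y|\ge |x|/2$ for $|y|\le 1$, $|x|\ge 2$. One point to state explicitly: pulling the kernel bound out of the $B_1$ integral uses $K>0$ on $B_1$; alternatively, bound by $\int_{B_1}|K|u^\tau$.
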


\begin{proof}
Let $f(x, t) = K(x) t^\tau$. Then it is easy to verify that all the assumptions of Theorem \ref{thm equivalence} are fulfilled, and consequently \eqref{eq:no-asymptotic-integral} follows. Since $K(y) \leq 0$ for $|y|>1$, it follows that, for $|x|\ge2$,
\[
\begin{aligned}
    u(x)
    & \le C_{n,\sigma}\int_{B_1} \frac{K(y)u^\tau(y)}{|x-y|^{n-2\sigma}}\,dy \\
    & \le C|x|^{-(n-2\sigma)}\int_{B_1}K(y)u^\tau(y)\,dy \\
    & \le C|x|^{-(n-2\sigma)}.
\end{aligned}
\]
This proves \eqref{eq:natural-upper-decay}.
\end{proof}

\begin{proof}[Proof of Theorem \ref{thm:no-asymptotic}]
Suppose, for contradiction, that $u$ is a nontrivial nonnegative solution. For $\lambda>0$, let
\[
    x^\lambda:=\frac{\lambda^2x}{|x|^2}
    \quad  \mbox{ and }  \quad
    u_\lambda(x):=
    \left(\frac{\lambda}{|x|}\right)^{n-2\sigma} u(x^\lambda), ~~ x\ne0.
\]
If $u\equiv0$ in $B_1$, then \eqref{eq:no-asymptotic-integral} together with $K\le0$ in $B_1^c$ would give
\[
u(x)
 =C_{n,\sigma}\int_{B_1^c}
 \frac{K(y)u^\tau(y)}{|x-y|^{n-2\sigma}}\,dy
 \le0,
 \qquad x\in\mathbb R^n.
\]
Since $u\ge0$, it follows that $u\equiv0$ in $\mathbb R^n$, a contradiction to the nontriviality of $u$. Therefore, there exists a point $y_0\in B_1$ and a neighborhood $E$ of $y_0$ such that $u>0$ in $E$. Moreover, one can verify that
\begin{equation}\label{eq 5.3}
\begin{aligned}
&u(x) - u_{\lambda}(x)
= \\ &\int_{B_{\lambda}}
\bigg( \frac{1}{|x-y|^{n-2\sigma}} - \Big(\frac{\lambda}{|x|}\Big)^{n-2\sigma} \frac{1}{\left|x^\lambda - y\right|^{n-2\sigma}} \bigg)
\left( K(y)u^{\tau}(y) - K\!(y^\lambda) u^{\tau}_\lambda(y) \right) dy,
\end{aligned}
\end{equation}
where
\begin{equation}\label{eq:section6-kernel-positive}
    \frac{1}{|x-y|^{n-2\sigma}} - \Big(\frac{\lambda}{|x|}\Big)^{n-2\sigma} \frac{1}{\left|x^\lambda - y\right|^{n-2\sigma}} >0 \quad \text{for }x, y\in B_\lambda\backslash\{0\}.
\end{equation}
For $0<\lambda\le1$ and $y\in B_\lambda\setminus\{0\}$, we have $|y^\lambda|>|y|$. If $|y^\lambda|<1$, the radial monotonicity of $K$ gives $K(y)\ge K(y^\lambda)$. If $|y^\lambda|\ge1$, then $K(y)>0\ge K(y^\lambda)$. Hence
\begin{equation}\label{eq:section6-K-comparison}
    K(y)\ge K(y^\lambda),
    \quad y\in B_\lambda\setminus\{0\},~~ 0<\lambda\le1.
\end{equation}
For $\lambda=1$, we have
\[
K(y)u^\tau(y)-K\Big(\frac{y}{|y|^2} \Big)u_1^\tau(y)
\ge K(y)u^\tau(y)\ge0
\quad\text{for }y\in B_1\setminus\{0\}.
\]
Moreover, this quantity is strictly positive for every $y\in E$,
because $K>0$ in $B_1$ and $u>0$ in $E$. Therefore,
\eqref{eq 5.3} and \eqref{eq:section6-kernel-positive} imply that \[
u(x)-u_1(x)>0,
\quad x\in B_1\setminus\{0\}.
\]
In particular, we conclude that
\begin{equation}\label{eq:section6-interior-positivity}
u>0\quad\text{in }B_1\setminus\{0\}.
\end{equation}

Define
\[
\begin{aligned}
    \lambda_0:=\inf\bigl\{\lambda\in(0,1]:
    u\ge u_\rho\text{ in }B_\rho\setminus\{0\}
    \text{ for every }\rho\in[\lambda,1]\bigr\}.
\end{aligned}
\]
We claim that
$\lambda_0=0$. Otherwise, $\lambda_0>0$. First we have
\begin{equation}\label{eq:section6-critical-comparison}
    u\ge u_{\lambda_0}
    \quad\text{in }B_{\lambda_0}\setminus\{0\}.
\end{equation}
The inequality is in fact strict. Indeed, by
\eqref{eq:section6-K-comparison} and \eqref{eq:section6-critical-comparison}, we obtain
\[
    K(y)u^\tau(y)-K(y^{\lambda_0})u_{\lambda_0}^\tau(y)\ge0
    \qquad\text{in }B_{\lambda_0}\setminus\{0\}.
\]
Moreover, for $0<|y|<\lambda_0^2$, we have $u(y)>0$ and $|y^{\lambda_0}|>1$. Hence
\[
K(y)>0\ge K(y^{\lambda_0}),
\]
and so
\[
K(y)u^\tau(y)
 -K(y^{\lambda_0})u_{\lambda_0}^\tau(y)>0.
\]
Using \eqref{eq 5.3} and
\eqref{eq:section6-kernel-positive}, we conclude that
\begin{equation}\label{eq:section6-critical-strict}
    u>u_{\lambda_0}
    \quad\text{in }B_{\lambda_0}\setminus\{0\}.
\end{equation}
Once the strict inequality \eqref{eq:section6-critical-strict} has been established, the comparison can be extended slightly inside \(B_{\lambda_{0}}\).
More precisely, there exists a small \(\delta\in(0,\lambda_0/2)\) such that
\begin{equation}\label{eq:section6-continuation}
u(x) \geq u_{\lambda}(x), \quad x \in B_{\lambda}\setminus\{0\}, \ \lambda \in [\lambda_{0}-\delta,\lambda_{0}].
\end{equation}
To see this, for $0<\lambda\le1$, let
\[
    \Sigma_\lambda^-:=
    \bigl\{x\in B_\lambda\setminus\{0\}:u_\lambda(x)>u(x)\bigr\}.
\]
For $y\notin\Sigma_\lambda^-$, \eqref{eq:section6-K-comparison} and $u_\lambda(y)\le u(y)$ give
\[
    K(y^\lambda)u_\lambda^\tau(y)-K(y)u^\tau(y)\le0.
\]
For $y\in\Sigma_\lambda^-$, \eqref{eq:section6-K-comparison}  and the mean value theorem give
\[
\begin{aligned}
    K(y^\lambda)u_\lambda^\tau(y)-K(y)u^\tau(y)
    \le K(y)\bigl(u_\lambda^\tau(y)-u^\tau(y)\bigr)\le \tau K(y)u_\lambda^{\tau-1}(y)
       \bigl(u_\lambda(y)-u(y)\bigr).
\end{aligned}
\]
Consequently, for $x\in\Sigma_\lambda^-$, we have
\begin{align*}
0&<u_{\lambda}(x)-u(x) \\
 &\leq
\tau \int_{\Sigma_{\lambda}^-}
\!\bigg(   \frac{1}{|x-y|^{n-2\sigma}} - \Big(\frac{\lambda}{|x|}\Big)^{n-2\sigma} \frac{1}{\left|x^\lambda - y\right|^{n-2\sigma}} \bigg)
K(y) u_{\lambda}^{\tau-1}(y)\,\bigl(u_{\lambda}(y)-u(y)\bigr)\, dy \\[4pt]
&\leq C \int_{\Sigma_{\lambda}^-}\frac{u_{\lambda}^{\tau-1}(y)\,(u_{\lambda}(y)-u(y))}{|x-y|^{n-2\sigma}} dy,
\end{align*}
where $C=C(n,\sigma, \|K\|_{L^\infty(B_1)})$. By the Hardy-Littlewood-Sobolev inequality and H\"older's inequality, for every
$q>n/(n-2\sigma)$,
\begin{equation}\label{eq:section6-HLS}
\begin{aligned}
    \|u_\lambda-u\|_{L^q(\Sigma_\lambda^-)}
    \le C
    \left(\int_{\Sigma_\lambda^-}u_\lambda^{\tau+1}\,dy\right)^{2\sigma/n}
    \|u_\lambda-u\|_{L^q(\Sigma_\lambda^-)}.
\end{aligned}
\end{equation}
Note that \eqref{eq:natural-upper-decay} implies \begin{equation}\label{eq:section6-uniform-kelvin-bound}
    \sup_{\lambda\in[\lambda_0/2,\lambda_0]}
    \|u_\lambda\|_{L^\infty(B_{\lambda_0})}<\infty.
\end{equation}
Indeed, if $|x^\lambda|\ge 2$, then
\[
    u_\lambda(x)=\left(\frac{\lambda}{|x|}\right)^{n-2\sigma}u(x^\lambda)
    \le C\left(\frac{\lambda}{|x|}\right)^{n-2\sigma}|x^\lambda|^{-(n-2\sigma)}
    =C\lambda^{-(n-2\sigma)}.
\]
If $|x^\lambda|<2$, then
\[
    u_\lambda(x)
    =\left(\frac{|x^\lambda|}{\lambda}\right)^{n-2\sigma} u(x^\lambda)
    \le \left(\frac{2}{\lambda}\right)^{n-2\sigma}
       \|u\|_{L^\infty(B_2)}.
\]
This proves \eqref{eq:section6-uniform-kelvin-bound}. It follows from the dominated convergence theorem that
\begin{equation}\label{eq:section6-Lp-convergence}
    u_\lambda\to u_{\lambda_0}
    \quad\text{in }L^{\tau+1}(B_{\lambda_0})
    \quad\text{as }\lambda\uparrow{\lambda_0}.
\end{equation}
For $\varepsilon>0$, set
\[
\begin{aligned}
    E_\varepsilon
    &:=\bigl\{x\in B_{\lambda_0}\setminus\{0\}:
          u(x)-u_{\lambda_0}(x)>\varepsilon\bigr\},\\
    F_\varepsilon
    &:=\bigl\{x\in B_{\lambda_0}\setminus\{0\}:
          0<u(x)-u_{\lambda_0}(x)\le\varepsilon\bigr\}.
\end{aligned}
\]
By \eqref{eq:section6-critical-strict} and the continuity of $u$,
\begin{equation}\label{eq:section6-F-small}
    |F_\varepsilon|\to 0
    \quad\text{as }\varepsilon\downarrow0.
\end{equation}
For $\lambda<{\lambda_0}$ and $x\in\Sigma_\lambda^-\cap E_\varepsilon$, one has
\[
    u_\lambda(x)-u_{\lambda_0}(x)
    =\bigl(u_\lambda(x)-u(x)\bigr)
     +\bigl(u(x)-u_{\lambda_0}(x)\bigr)>\varepsilon.
\]
Thus, by Chebyshev's inequality and \eqref{eq:section6-Lp-convergence},
\begin{equation}\label{eq:section6-E-small}
\begin{aligned}
    |\Sigma_\lambda^-\cap E_\varepsilon|
    &\le \varepsilon^{-(\tau+1)}
       \int_{B_{\lambda_0}}|u_\lambda-u_{\lambda_0}|^{\tau+1}\,dx
      \to 0
      \qquad\text{as }\lambda\uparrow{\lambda_0}.
\end{aligned}
\end{equation}
Since $\Sigma_\lambda^-\subset
(\Sigma_\lambda^-\cap E_\varepsilon)\cup F_\varepsilon$, estimates
\eqref{eq:section6-F-small} and \eqref{eq:section6-E-small} imply
$|\Sigma_\lambda^-|\to0$ as $\lambda\uparrow{\lambda_0}$. Combining this with
\eqref{eq:section6-uniform-kelvin-bound}, we obtain
\begin{equation}\label{eq:section6-small-coefficient}
    \int_{\Sigma_\lambda^-}u_\lambda^{\tau+1}\,dy
    \to 0
    \quad\text{as }\lambda\uparrow{\lambda_0}.
\end{equation}
For $\lambda$ sufficiently close to ${\lambda_0}$, the coefficient on the right-hand side of
\eqref{eq:section6-HLS} is strictly smaller than one. Hence
$\Sigma_\lambda^-=\varnothing$, which proves \eqref{eq:section6-continuation}.
However, \eqref{eq:section6-continuation} contradicts the infimum definition of \(\lambda_{0}\).
Thus, we have $\lambda_{0}=0$. Consequently,
\begin{equation}\label{eq:section6-all-spheres}
    u\ge u_\lambda
    \quad\text{in }B_\lambda\setminus\{0\} ~~\text{for every }0<\lambda\le1.
\end{equation}

Fix $\theta\in\mathbb S^{n-1}$. By \eqref{eq:section6-interior-positivity}, we have $u(\theta/2)>0$.
For $0<r<\frac12$, let $x=r\theta$ and $\lambda=\sqrt{r/2}$.
Then $x\in B_\lambda\setminus\{0\}$ and $x^\lambda=\theta/2$.
By \eqref{eq:section6-all-spheres}, we obtain
\[
\begin{aligned}
u(r\theta)
\ge
\left(\frac{\lambda}{r}\right)^{n-2\sigma}
u(x^\lambda)=
\left({\frac {1}{2r}}\right)^{\frac{n-2\sigma}{2}}
u\left( \frac\theta2 \right) \to +\infty \text{ as } r \to 0.
\end{aligned}
\]
This contradicts the continuity of $u$ at the origin. Hence, equation \eqref{sobolev eq 7} admits no nontrivial nonnegative solution. The proof of Theorem \ref{thm:no-asymptotic} is completed.
\end{proof}

\begin{proof}[Proof of Corollary \ref{thm sphere}] It suffices to show the non-existence of positive solutions of \eqref{sobolev eq 7} under the assumptions that
\begin{equation}\label{eq 5.4}
|x|^{\,n-2\sigma} u(x) \to M > 0 \quad \text{as } |x| \to \infty
\end{equation}
and
\begin{equation}\label{eq 5.5}
\begin{cases}
K(x)=K(|x|) \in C (\mathbb{R}^n), \\
K(r) > 0 \text{ is nonincreasing for } 0 \leq r < 1, \\
K(r) \le 0  \text{ for } r > 1 .
\end{cases}
\end{equation}
Under these assumptions, we only need to prove that the solution $u$ of \eqref{sobolev eq 7} also satisfies the integral equation
\begin{equation}\label{26Ineuq}
u(x)=C_{n, \sigma} \int_{\mathbb{R}^n} \frac{K(y) u^\frac{n+2\sigma}{n-2\sigma}(y)}{|x-y|^{n-2 \sigma}} d y \quad \text { for }  x \in \mathbb{R}^n.
\end{equation}
The rest of the proof follows exactly as in Theorem \ref{thm:no-asymptotic}. By \eqref{eq 5.4}, we have
\begin{equation}\label{eq 5.1}
\int_{\mathbb{R}^{n}} \frac{u(x)}{1+|x|^{q}} \, dx
\leq C + \int_{B_{1}^{c}} \frac{M}{|x|^{n-2\sigma}(1+|x|^{q})} \, dx < \infty,
\quad \forall \, q > \frac{n+2\sigma}{2}.
\end{equation}
For small \(\varepsilon > 0\), by Lemma \ref{lemma remo} we use \(\psi \in C^{\infty}(\mathbb{R}^{n})\) with
\[
\psi(x) = \frac{1}{|x|^{\frac{n-2\sigma}{2}+\varepsilon}} \quad \text{on } B_{1}^{c}
\]
as a test function to the differential equation \eqref{sobolev eq 7}. Then, similar to the proof of \eqref{eq 1} in Proposition \ref{prop u lift}, we have that for any $q > \frac{n-2\sigma}{2}$,
\begin{equation}\label{eq 5.2}
\left| \int_{\mathbb{R}^{n}} \frac{K(x) u^{\frac{n+2\sigma}{n-2\sigma}}(x)}{1+|x|^{q}} \, dx \right|
\leq \left| \int_{\mathbb{R}^{n}} u(x)(-\Delta)^{\sigma} \psi(x) \, dx \right|
\leq \int_{\mathbb{R}^{n}} \frac{u(x)}{1+|x|^{q+2\sigma}} \, dx < \infty.
\end{equation}
This and \eqref{eq 5.1} coincide with the conclusion of Proposition \ref{prop u lift} when $p = \frac{n+2\sigma}{n-2\sigma}$ and $\alpha = 0$. Using a similar proof as in Proposition \ref{prop v} and Theorem \ref{thm equivalence}, we obtain that
$u$ also satisfies the integral equation \eqref{26Ineuq}. This proves Corollary \ref{thm sphere}.
\end{proof}

\appendix

\section*{Appendix}
\addcontentsline{toc}{section}{Appendix}

\newcommand{\appendixsubsection}[1]{%
  \refstepcounter{section}%
  \subsection*{\thesection\quad #1}%
  \addcontentsline{toc}{subsection}{\protect\numberline{\thesection}#1}%
}

\appendixsubsection{\texorpdfstring{Values of $\int_{\partial B_r} g_m(u_0,u_0)$ in Lemma \ref{lemma 3.4}}{Values of gm (u0,u0) in Lemma}} \label{sec gm}

In this appendix, we present a detailed calculation for $\int_{\partial B_r} g_m(u_0,u_0)$ in Lemma \ref{lemma 3.4}. By Guo-Peng-Yan \cite[(3.12), (3.14), (3.15)]{guo2015existencelocaluniquenessbubbling}, for any smooth function $u$ in a bounded domain,
\begin{align*}
   2\int_{\Omega}(x\cdot \nabla u)(-\Delta)^m u &= -2\int_{\partial\Omega} (y\cdot \nabla u)\frac{\partial (-\Delta)^{m-1} u}{\partial \nu}
   + 2\int_{\partial\Omega}  \Big(y\cdot \nabla \frac{\partial u}{\partial \nu}\Big) (-\Delta)^{m-1} u\\
   &\quad + (n-2m+2) \int_{\partial\Omega} (-\Delta)^{m-1} u \frac{\partial u}{\partial \nu}
   - (n-2m) \int_{\partial\Omega} u \frac{\partial (-\Delta)^{m-1} u}{\partial \nu} \\
   &\quad + \int_{\partial\Omega} g_{m-2}(-\Delta u, -\Delta u)
   - (n-2m) \int_{\Omega} u (-\Delta)^m u.
\end{align*}
Hence, $\int_{\partial B_r} g_m(u,u)$ can be represented iteratively by
\begin{equation}\label{eq iterated 1}
\begin{aligned}
    \int_{\partial\Omega} g_m(u,u)
    &\overset{\eqref{eq 4.9}}{=}  2\int_{B_r}(x\cdot \nabla u)(-\Delta)^m u + (n-2m) \int_{B_r} u(-\Delta)^m u\\
    &=-2\int_{\partial\Omega} (y\cdot \nabla u)\frac{\partial (-\Delta)^{m-1} u}{\partial \nu}
     + 2\int_{\partial\Omega} \Big(y\cdot \nabla \frac{\partial u}{\partial \nu}\Big) (-\Delta)^{m-1} u \\
    &\quad + (n-2m+2) \int_{\partial\Omega} (-\Delta)^{m-1} u \frac{\partial u}{\partial \nu}
     - (n-2m) \int_{\partial\Omega} u \frac{\partial (-\Delta)^{m-1} u}{\partial \nu} \\
    &\quad + \int_{\partial\Omega} g_{m-2}(-\Delta u, -\Delta u),\qquad m=3,4,...
\end{aligned}
\end{equation}
Take $\Omega=B_r$. Assume $x\in\partial B_r$ and $\nu$ is the unit outward normal. Then for $s>0$, we have
\begin{equation}\label{eq basic 1}
\begin{aligned}
    (-\Delta)^m |x|^{-s} &= \frac{F(s,m)}{|x|^{s+2m}}, \quad \text{where }F(s,m) = \prod_{i=1}^m (s+2i-2)(n-2i-s), \\
    \frac{\partial (-\Delta)^{m-1}|x|^{-s}}{\partial\nu} &= -\frac{(s+2m-2)F(s,m-1)}{|x|^{s+2m-1}}.
\end{aligned}
\end{equation}
Combining \eqref{eq basic 1} with \eqref{eq iterated 1} yields
\begin{equation*}
\begin{aligned}
    \int_{\partial B_r} g_m(|x|^{-s},|x|^{-s})
    &= -2s(s+2m-2)F(s,m-1)|\mathbb{S}^{n-1}| r^{n-2m-2s} \\
    &\quad + 2s(s+1)F(s,m-1)|\mathbb{S}^{n-1}| r^{n-2m-2s} \\
    &\quad - (n-2m+2)s F(s,m-1)|\mathbb{S}^{n-1}| r^{n-2m-2s} \\
    &\quad + (n-2m)(s+2m-2) F(s,m-1)|\mathbb{S}^{n-1}| r^{n-2m-2s} \\
    &\quad + \int_{\partial B_r} g_{m-2}\Big(\frac{s(s+2-n)}{|x|^{s+2}},\frac{s(s+2-n)}{|x|^{s+2}}\Big).
\end{aligned}
\end{equation*}
Simplifying,
\begin{equation*}
\begin{aligned}
   \int_{\partial B_r} g_m(|x|^{-s},|x|^{-s})
   &= 2(m-1)(n-2m-2s)F(s,m-1)|\mathbb{S}^{n-1}| r^{n-2m-2s} \\
   &\quad + s^2(s+2-n)^2 \int_{\partial B_r} g_{m-2}(|x|^{-(s+2)},|x|^{-(s+2)}).
\end{aligned}
\end{equation*}
Substitute $s=\frac{n-2m}{2}$ to obtain the iteration
\begin{equation}\label{eq iterate 2}
\begin{aligned}
    &\int_{\partial B_r} g_m(|x|^{-\frac{n-2m}{2}},|x|^{-\frac{n-2m}{2}})\\
    &\qquad= \Big(\frac{n-2m}{2}\Big)^2\Big(\frac{n+2(m-2)}{2}\Big)^2
      \int_{\partial B_r} g_{m-2}(|x|^{-\frac{n-2(m-2)}{2}},|x|^{-\frac{n-2(m-2)}{2}}).
      \end{aligned}
\end{equation}
We compute the first two terms explicitly. By \cite[(3.11), (3.13)]{guo2015existencelocaluniquenessbubbling},
\begin{equation*}
\begin{aligned}
   & \int_{\partial B_r} g_1(|x|^{-s},|x|^{-s}) \\
    &\qquad= -2\int_{\partial B_r} \frac{\partial |x|^{-s}}{\partial \nu} (x\cdot \nabla |x|^{-s})
      + \int_{\partial B_r} (x\cdot\nu)|\nabla |x|^{-s}|^2
      - (n-2)\int_{\partial B_r} \frac{\partial |x|^{-s}}{\partial \nu}|x|^{-s},
      \end{aligned}
\end{equation*}
and
\begin{equation*}
\begin{aligned}
    \int_{\partial B_r} g_2(|x|^{-s},|x|^{-s})
    = &-2\int_{\partial B_r} \frac{\partial (-\Delta |x|^{-s})}{\partial \nu}(x\cdot \nabla |x|^{-s})
      + 2\int_{\partial B_r} (-\Delta |x|^{-s}) (x\cdot \nabla \frac{\partial |x|^{-s}}{\partial \nu})\\
      + &\frac{n-2}{2}\int_{\partial B_r} (-\Delta |x|^{-s})\frac{\partial |x|^{-s}}{\partial \nu}
      + \int_{\partial B_r} (x\cdot\nu)(-\Delta |x|^{-s})^2\\
      - &(n-4)\int_{\partial B_r} |x|^{-s} \frac{\partial (-\Delta |x|^{-s})}{\partial \nu}.
      \end{aligned}
\end{equation*}
Let $s=\frac{n-2m}{2}$ for $m=1,2$ respectively. Then
\begin{equation}\label{g1}
    \int_{\partial B_r} g_1(|x|^{-\frac{n-2}{2}},|x|^{-\frac{n-2}{2}})
    = \Big(\frac{n-2}{2}\Big)^2 |\mathbb{S}^{n-1}|,
\end{equation}
\begin{equation}\label{g2}
    \int_{\partial B_r} g_2(|x|^{-\frac{n-4}{2}},|x|^{-\frac{n-4}{2}})
    = \Big(\frac{n}{2}\Big)^2 \Big(\frac{n-4}{2}\Big)^2 |\mathbb{S}^{n-1}|.
\end{equation}
One may notice that \eqref{g1} and \eqref{g2} are independent of $r$. Combining \eqref{eq iterate 2}-\eqref{g2} and scaling,
\begin{equation*}
    \int_{\partial B_r} g_m(u_0,u_0)
    = |\mathbb{S}^{n-1}| M_0^2 \prod_{i=0}^{m-1}\Big(\frac{n-2m+4i}{2}\Big)^2,\qquad m=1,2,3,...
\end{equation*}

\appendixsubsection{Definition and some properties of the hypergeometric function}

For readers' convenience, we provide the definition and some properties of the hypergeometric function, which is used in Lemma \ref{lemma 3.5}.

\begin{definition}[{\cite[Eq.~15.1.1]{abramowitz_handbook_2013}}]
The hypergeometric function ${}_2F_1(a,b;c;z)$ is defined by the {Gauss series}:
\begin{equation*}
{}_2F_1(a,b;c;z)
= 1 + \frac{ab}{c} z
  + \frac{a(a+1)b(b+1)}{c(c+1)2!} z^2 + \cdots
= \frac{\Gamma(c)}{\Gamma(a)\Gamma(b)}
  \sum_{s=0}^{\infty} \frac{\Gamma(a+s)\Gamma(b+s)}{\Gamma(c+s)\, s!} \, z^s .
\end{equation*}
The series converges for $|z|<1$, and defines ${}_2F_1(a,b;c;z)$ elsewhere by analytic continuation.
In general, ${}_2F_1(a,b;c;z)$ is undefined when $c=0,-1,-2,\dots$.
\end{definition}

\begin{proposition}
\begin{enumerate}
    \item From the definition,
    \begin{equation}\label{eq:hyper_zero}
        {}_2F_1(a,b;c;0) = 1.
    \end{equation}

    \item If $\Re(c-a-b) > 0$, then
    \begin{equation}\label{eq:hyper_one}
        {}_2F_1(a,b;c;1)
        = \frac{\Gamma(c)\,\Gamma(c-a-b)}{\Gamma(c-a)\,\Gamma(c-b)}.
    \end{equation}
    (see \cite[Eq.~15.1.20]{abramowitz_handbook_2013})

    \item The derivative formula:
    \begin{equation}\label{eq:hyper_derivative}
        \frac{d}{dz} {}_2F_1(a,b;c;z)
        = \frac{ab}{c} \, {}_2F_1(a+1, b+1; c+1; z).
    \end{equation}
    (see \cite[Eq.~15.2.1]{abramowitz_handbook_2013})

    \item Euler's integral representation:
    \begin{equation}\label{eq:hyper_integral}
        {}_2F_1(a,b;c;z)
        = \frac{\Gamma(c)}{\Gamma(b)\,\Gamma(c-b)}
          \int_0^1 t^{\,b-1}(1-t)^{\,c-b-1}(1-zt)^{-a}\,dt .
    \end{equation}
    valid for $|\arg(1-z)|<\pi$ and $\Re(c)>\Re(b)>0$.
    (see \cite[Eq.~15.3.1]{abramowitz_handbook_2013})

    \item Transformation formulas:
    \begin{equation}\label{eq:hyper_transformation}
        {}_2F_1(a,b;c;z)
        = (1-z)^{-a} {}_2F_1\!\left(a,\, c-b;\, c;\, \frac{z}{z-1}\right)
        = (1-z)^{-b} {}_2F_1\!\left(c-a,\, b;\, c;\, \frac{z}{z-1}\right),
    \end{equation}
    valid for $|\arg(1-z)| < \pi$. Moreover,
    \begin{equation}\label{eq:hyper_Gauss trans}
        {}_2F_1(a,b;c;z)
        = (1-z)^{c-a-b} {}_2F_1(c-a,\, c-b;\, c;\, z).
    \end{equation}
    (see \cite[Eq.~15.3.3-15.3.5]{abramowitz_handbook_2013})

    \item The hypergeometric differential equation:
    \begin{equation}\label{eq:hyper_diffeq}
        z(1-z) \frac{d^2}{dz^2} {}_2F_1(a,b;c;z)
        + \big(c - (a+b+1)z \big) \frac{d}{dz} {}_2F_1(a,b;c;z)
        - ab\, {}_2F_1(a,b;c;z) = 0,
    \end{equation}
    valid for $c \notin \{0,-1,-2,\dots\}$.
    (see \cite[Section 15.5]{abramowitz_handbook_2013})
\end{enumerate}
\end{proposition}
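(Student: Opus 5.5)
These are classical facts about the Gauss hypergeometric function. The plan is to derive them all from the series definition and from Euler's integral \eqref{eq:hyper_integral}, using analytic continuation in $z$ and in the parameters to remove auxiliary restrictions. Throughout I write $(q)_s=\Gamma(q+s)/\Gamma(q)$ for the Pochhammer symbol, so that
\[
{}_2F_1(a,b;c;z)=\sum_{s\ge0}\frac{(a)_s(b)_s}{(c)_s\,s!}z^s .
\]

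\textbf{Series-level identities.} Item \eqref{eq:hyper_zero} is immediate: only the $s=0$ term survives at $z=0$. For \eqref{eq:hyper_derivative}, I would differentiate the series termwise inside $|z|<1$ and shift the index $s\mapsto s+1$. The identity $(q)_{s+1}=q\,(q+1)_s$ then turns the coefficients into $\frac{ab}{c}\,\frac{(a+1)_s(b+1)_s}{(c+1)_s\,s!}$, which gives \eqref{eq:hyper_derivative}. Uniqueness of analytic continuation extends it to the cut plane.

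For \eqref{eq:hyper_diffeq}, I would use the Euler operator $\vartheta=z\frac{d}{dz}$. Comparing coefficients of $z^{s+1}$ shows $\vartheta(\vartheta+c-1)F=z(\vartheta+a)(\vartheta+b)F$, since both sides reduce to $(s+1)(s+c)\,\alpha_{s+1}=(s+a)(s+b)\,\alpha_s$ for the coefficients $\alpha_s$. Expanding $\vartheta^2=z^2\frac{d^2}{dz^2}+z\frac{d}{dz}$ and dividing by $z$ then yields \eqref{eq:hyper_diffeq}.

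\textbf{Euler integral and Gauss's formula.} For \eqref{eq:hyper_integral} with $\operatorname{Re}c>\operatorname{Re}b>0$ and $|z|<1$, I would expand $(1-zt)^{-a}=\sum_s \frac{(a)_s}{s!}z^st^s$; this converges uniformly for $t\in[0,1]$. Integrating termwise with the Beta integral $\int_0^1t^{b+s-1}(1-t)^{c-b-1}\,dt=\Gamma(b+s)\Gamma(c-b)/\Gamma(c+s)$ reproduces the series exactly. The integral is holomorphic in $z\in\mathbb C\setminus[1,\infty)$, so it furnishes the analytic continuation there.

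Gauss's formula \eqref{eq:hyper_one} should then follow by setting $z=1$. The integral becomes
\[
\frac{\Gamma(c)}{\Gamma(b)\Gamma(c-b)}\,B(b,c-a-b)=\frac{\Gamma(c)\,\Gamma(c-a-b)}{\Gamma(c-a)\,\Gamma(c-b)},
\]
and it is finite precisely when $\operatorname{Re}(c-a-b)>0$. Continuity as $z\uparrow1$ follows by dominated convergence in the integral; alternatively, Abel's theorem applies because the coefficients decay like $s^{a+b-c-1}$, so the series converges absolutely at $z=1$. To drop the restriction $\operatorname{Re}c>\operatorname{Re}b>0$, I would note that both sides are meromorphic in $(a,b,c)$ on the region $\operatorname{Re}(c-a-b)>0$, where the series at $z=1$ converges locally uniformly in the parameters, so the identity persists by analytic continuation.

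\textbf{Transformations.} For \eqref{eq:hyper_transformation}, I would substitute $t\mapsto1-t$ in the Euler integral and use
\[
1-z(1-t)=(1-z)\Bigl(1-\tfrac{z}{z-1}\,t\Bigr).
\]
This produces $(1-z)^{-a}$ times the Euler integral for ${}_2F_1(a,c-b;c;\tfrac{z}{z-1})$, with the exponents of $t$ and $1-t$ swapped, which is exactly the parameter pair $(c-b,\,b)$. The second equality comes from the symmetry $a\leftrightarrow b$ of the series. Composing the two Pfaff transformations gives \eqref{eq:hyper_Gauss trans}: the map $z\mapsto z/(z-1)$ is an involution, and the prefactors combine to $(1-z)^{c-a-b}$.

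\textbf{Main obstacle.} The step I expect to need the most care is the bookkeeping of validity regions. The integral proofs require $\operatorname{Re}c>\operatorname{Re}b>0$, so the extension to general parameters (and to the full range $|\arg(1-z)|<\pi$) should go through the identity theorem. The point to verify is that both sides are holomorphic in all variables on a connected domain that meets the region where the integral proof applies. Since the paper only uses these identities for real parameters satisfying the required inequalities (e.g. $a=b=\frac{n-2\sigma}{4}$, $c=\frac n2$), it would also suffice to cite \cite{abramowitz_handbook_2013} directly.
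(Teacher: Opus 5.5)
Your derivations are correct, but they take a genuinely different route from the paper. The paper does not prove this proposition at all: it only lists the identities and cites Abramowitz--Stegun (Eqs.~15.1.20, 15.2.1, 15.3.1, 15.3.3--15.3.5 and \S15.5).

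You instead make everything self-contained:
\begin{itemize}
\item termwise manipulation of the series gives \eqref{eq:hyper_zero} and \eqref{eq:hyper_derivative}, and, via $\vartheta(\vartheta+c-1)F=z(\vartheta+a)(\vartheta+b)F$, also \eqref{eq:hyper_diffeq};
\item the Beta integral gives \eqref{eq:hyper_integral};
\item evaluating the Euler integral at $z=1$ gives \eqref{eq:hyper_one};
\item the substitution $t\mapsto 1-t$ together with the involution $z\mapsto z/(z-1)$ gives both Pfaff formulas and, by composition, Euler's transformation.
\end{itemize}

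The citation buys full generality (complex parameters, the whole cut plane) at no cost. Your argument buys transparency, and your closing remark can be made fully precise. Every instance used in Appendix~\ref{CC} has real parameters with $c>a>0$ and $c>b>0$, namely
\[
(a,b,c)=\Bigl(\tfrac{n+2\sigma}{4},\tfrac{n-2\sigma}{4},\tfrac n2\Bigr),\ \Bigl(\tfrac{n+2\sigma}{4},\tfrac{n+2\sigma}{4},\tfrac n2\Bigr),\ \Bigl(\tfrac{n-2\sigma}{4}+1,\tfrac{n-2\sigma}{4}+1,\tfrac n2+1\Bigr),\ \Bigl(\tfrac{n-2\sigma}{4},\tfrac{n-2\sigma}{4},\tfrac n2\Bigr),\ \Bigl(\tfrac{n+2\sigma}{4},\tfrac{n+2\sigma}{4},\tfrac n2+1\Bigr).
\]
These inequalities use $n>2\sigma$, and Gauss's formula additionally uses $0<\sigma<1$. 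All evaluations are at real $z<1$. Hence both Pfaff steps in your composition and the evaluation at $z=1$ apply directly, and no continuation in $(a,b,c)$ is ever needed.

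One small precision point concerns \eqref{eq:hyper_one}. If ${}_2F_1(a,b;c;1)$ means the value of the series at $z=1$, then Abel's theorem (or simply absolute convergence on the closed disk) and dominated convergence in the integral are both needed; they are not alternatives. The first identifies the series value with the radial limit, and the second identifies that limit with the Beta integral. Since the paper only uses $F(1)$ through radial limits ($s\to 1$ and $t\to 0$ in Appendix~\ref{CC}), dominated convergence alone suffices there.
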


\appendixsubsection{Computation of the Pohozaev integral in the fractional case}\label{CC}
This appendix is devoted to computing the Pohozaev integral in the fractional case, which is used in Lemma \ref{lemma 3.5}. We first give a standard integral identity based on the Gamma functions.

\begin{proposition}\label{appendix 1}
    For $A,B>0$ and $\Re(\mu)>0$, $\Re(\nu)>0$, we have
\begin{equation*}
\frac{\Gamma(\mu+\nu)}{\Gamma(\mu)\Gamma(\nu)}
  \int_0^1 s^{\mu-1} (1-s)^{\nu-1} \,
  \big[sA + (1-s)B\big]^{-(\mu+\nu)} \, ds=\frac{1}{A^{\mu} B^{\nu}}
.
\end{equation*}
\end{proposition}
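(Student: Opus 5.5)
The plan is to reduce the identity to the Euler Beta integral by a change of variables that absorbs the weight $[sA+(1-s)B]^{-(\mu+\nu)}$. Both sides are analytic in $\mu$ and $\nu$ on $\{\Re\mu>0,\Re\nu>0\}$. It therefore suffices to verify the identity for real $\mu,\nu>0$ and then extend by analytic continuation, or simply to observe that the computation below is valid verbatim for complex exponents.

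First substitute
\[
t=\frac{sA}{sA+(1-s)B},
\]
which maps $[0,1]$ monotonically onto $[0,1]$ because $A,B>0$. This substitution satisfies the following relations:
\[
1-t=\frac{(1-s)B}{sA+(1-s)B},\qquad dt=\frac{AB}{[sA+(1-s)B]^2}\,ds .
\]
Consequently,
\[
s^{\mu-1}(1-s)^{\nu-1}[sA+(1-s)B]^{-(\mu+\nu)}\,ds=\frac{t^{\mu-1}(1-t)^{\nu-1}}{A^{\mu-1}B^{\nu-1}}\cdot\frac{dt}{AB}.
\]
To check this, note that the powers of $sA+(1-s)B$ balance: we get $(\mu-1)+(\nu-1)+2=\mu+\nu$.

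The integral then becomes $A^{-\mu}B^{-\nu}\int_0^1 t^{\mu-1}(1-t)^{\nu-1}\,dt=A^{-\mu}B^{-\nu}\,\mathrm{B}(\mu,\nu)$. Using $\mathrm{B}(\mu,\nu)=\Gamma(\mu)\Gamma(\nu)/\Gamma(\mu+\nu)$, this exactly cancels the prefactor and gives the claim.

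An alternative route, as a cross-check, is the Gamma-function representation $X^{-a}=\Gamma(a)^{-1}\int_0^\infty r^{a-1}e^{-rX}\,dr$. Apply it to $A^{-\mu}$ and $B^{-\nu}$, multiply, and change to polar-type variables $r_1=rs$, $r_2=r(1-s)$. This reproduces the Feynman-parameter formula directly.

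The main obstacle is bookkeeping: making sure the Jacobian power and the weight exponent cancel exactly, and justifying complex exponents. For the latter, note that $|t^{\mu-1}|=t^{\Re\mu-1}$ is integrable when $\Re\mu>0$, and all the algebraic identities used hold pointwise for positive real bases. No earlier result of the paper is needed.
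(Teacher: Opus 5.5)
Your proof is correct, but your main argument differs from the paper's.

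The paper proves the identity from scratch. It writes $A^{-\mu}B^{-\nu}=\frac{1}{\Gamma(\mu)\Gamma(\nu)}\int_0^\infty\int_0^\infty t^{\mu-1}s^{\nu-1}e^{-At-Bs}\,dt\,ds$, changes variables $t=uv$, $s=u(1-v)$, and evaluates the inner $u$-integral as $\Gamma(\mu+\nu)\,(vA+(1-v)B)^{-(\mu+\nu)}$. This is exactly the route you list only as a cross-check.

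Your substitution $t=sA/(sA+(1-s)B)$ instead reduces the left-hand side in one algebraic step to $A^{-\mu}B^{-\nu}\,\mathrm{B}(\mu,\nu)$. Your Jacobian $AB/[sA+(1-s)B]^2$ and the exponent count are right. The power laws you use remain valid for complex exponents because every base is a positive real, so the analytic-continuation remark is unnecessary. Your route also avoids any Fubini/Tonelli justification for a double integral.

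The trade-off is that your argument rests on the Beta--Gamma relation $\mathrm{B}(\mu,\nu)=\Gamma(\mu)\Gamma(\nu)/\Gamma(\mu+\nu)$. That relation is usually proved by the same double-integral change of variables the paper uses; the paper's computation with $A=B=1$ is precisely that proof. So the paper's argument is self-contained from the definition of $\Gamma$. Yours is shorter and makes the structure clearer (a reparametrization of $[0,1]$), provided the Beta-function identity is taken as known.
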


\begin{proof}
We start from the Gamma integral
\begin{equation*}
\int_0^\infty t^{z-1} e^{-At}\, dt = \int_0^\infty \left(\frac{t}{A}\right)^{z-1} e^{-t}\, \frac{dt}{A} = \frac{1}{A^z} \int_0^\infty t^{z-1} e^{-t}\, dt = \frac{\Gamma(z)}{A^z}.
\end{equation*}
Hence
\begin{equation*}
\frac{1}{A^\mu} = \frac{1}{\Gamma(\mu)} \int_0^\infty t^{\mu-1} e^{-At}\, dt.
\end{equation*}
Similarly,
\begin{equation*}
\frac{1}{A^\mu B^\nu} = \frac{1}{\Gamma(\mu)\Gamma(\nu)} \int_0^\infty \int_0^\infty t^{\mu-1} s^{\nu-1} e^{-At-Bs}\, dt\, ds.
\end{equation*}
Now make the change of variables $t = u v$, $s = u(1-v)$ with $u \in (0,\infty)$, $v\in(0,1)$, whose Jacobian satisfies $\partial(t,s)/\partial(u,v) = u$. Thus
\begin{align*}
\frac{1}{A^\mu B^\nu} &= \frac{1}{\Gamma(\mu)\Gamma(\nu)} \int_0^\infty \int_0^1 (uv)^{\mu-1} (u(1-v))^{\nu-1} e^{-Au v - B u(1-v)} u \, dv\, du \\
&= \frac{1}{\Gamma(\mu)\Gamma(\nu)} \int_0^1 v^{\mu-1}(1-v)^{\nu-1} \left( \int_0^\infty u^{\mu+\nu-1} e^{-(Av+B(1-v))u}\, du \right) dv.
\end{align*}
Evaluating the inner integral gives
\begin{equation*}
\int_0^\infty u^{\mu+\nu-1} e^{-(Av+B(1-v))u}\, du = \frac{\Gamma(\mu+\nu)}{(Av+B(1-v))^{\mu+\nu}}.
\end{equation*}
Therefore
\begin{equation*}
\frac{1}{A^\mu B^\nu} = \frac{\Gamma(\mu+\nu)}{\Gamma(\mu)\Gamma(\nu)} \int_0^1 v^{\mu-1}(1-v)^{\nu-1} (Av+(1-v)B)^{-(\mu+\nu)} dv.
\end{equation*}
\end{proof}

\begin{proposition}\label{appendix 2}
For $m>n/2$ and $f(x)>0$, we have
\begin{equation*}
\int_{\mathbb{R}^n} \frac{dy}{(|y|^2 + f(x))^m}
= \frac{1}{2} |\mathbb{S}^{n-1}|\,
  \frac{\Gamma\!\left(m-\frac{n}{2}\right)\Gamma\!\left(\frac{n}{2}\right)}
       {\Gamma(m)}\,
  (f(x))^{\frac{n}{2}-m}.
\end{equation*}
\end{proposition}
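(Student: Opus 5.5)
We compute the integral in polar coordinates and reduce it to a Beta function. Writing $a := f(x) > 0$ (the variable $x$ enters only as a parameter) and $\rho = |y|$, we have
\[
\int_{\mathbb{R}^n} \frac{dy}{(|y|^2 + a)^m} = |\mathbb{S}^{n-1}| \int_0^\infty \frac{\rho^{n-1}}{(\rho^2+a)^m}\, d\rho .
\]
The condition $m > n/2$ makes the integrand behave like $\rho^{n-1-2m}$ at infinity, so the integral converges there. Near $0$ the integrand is bounded, since $a > 0$.

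Next, substitute $\rho = \sqrt{a}\, s$ to extract the scaling. This gives
\[
\int_0^\infty \frac{\rho^{n-1}}{(\rho^2+a)^m}\, d\rho = a^{\frac n2 - m} \int_0^\infty \frac{s^{n-1}}{(1+s^2)^m}\, ds .
\]
This already produces the factor $(f(x))^{\frac n2 - m}$.

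To evaluate the remaining integral, put $t = s^2$, so that $s^{n-1}\, ds = \tfrac12 t^{\frac n2 - 1}\, dt$. Then
\[
\int_0^\infty \frac{s^{n-1}}{(1+s^2)^m}\, ds = \frac12 \int_0^\infty \frac{t^{\frac n2 - 1}}{(1+t)^m}\, dt .
\]

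The last integral is the Beta function of the second kind, $B\!\left(\tfrac n2,\, m - \tfrac n2\right) = \Gamma\!\left(\tfrac n2\right)\Gamma\!\left(m - \tfrac n2\right)/\Gamma(m)$. To verify this, substitute $t = v/(1-v)$, which turns it into $\int_0^1 v^{\frac n2 - 1}(1-v)^{m - \frac n2 - 1}\, dv$. One can also get it from Proposition \ref{appendix 1} with $\mu = \tfrac n2$, $\nu = m - \tfrac n2$, $A = B = 1$. Both parameters are positive precisely because $m > n/2$.

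Collecting the factors $|\mathbb{S}^{n-1}|$, $a^{\frac n2 - m}$ and $\tfrac12$ with the Beta value gives the stated identity. No step is a real obstacle; the only points needing care are the convergence condition $m > n/2$ and tracking the factor $\tfrac12$ that comes from the substitution $t = s^2$.
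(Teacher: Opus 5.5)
Your proof is correct and takes essentially the same approach as the paper: polar coordinates, the substitution $\rho^2=t$ plus a scaling by $f(x)$ (done in the opposite order), and identification of $\int_0^\infty t^{\frac n2-1}(1+t)^{-m}\,dt$ with $B\bigl(\tfrac n2,\,m-\tfrac n2\bigr)$. Your convergence check and your verification of the Beta identity via $t=v/(1-v)$ are details the paper omits.
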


\begin{proof}
We compute
\begin{align*}
\text{LHS} &= |\mathbb{S}^{n-1}| \int_0^\infty \frac{r^{n-1}}{(r^2+f(x))^m}\, dr = |\mathbb{S}^{n-1}| \int_0^\infty \frac{t^{\frac{n}{2}-1}}{(t+f(x))^m}\, \frac{dt}{2t^{1/2}} \qquad (r^2=t) \\
&= \frac{1}{2} |\mathbb{S}^{n-1}| \int_0^\infty \frac{t^{\frac{n}{2}-1}}{(t+f(x))^m}\, dt = \frac{1}{2} |\mathbb{S}^{n-1}| (f(x))^{\frac{n}{2}-m} \int_0^\infty \frac{s^{\frac{n}{2}-1}}{(1+s)^m}\, ds \qquad (t=f(x) s).
\end{align*}
The last integral is a Beta function:
\begin{equation*}
\int_0^\infty \frac{s^{\frac{n}{2}-1}}{(1+s)^m}\, ds = B\!\left(\frac{n}{2}, m-\frac{n}{2}\right)= \frac{\Gamma\!\left(m-\frac{n}{2}\right)\Gamma\!\left(\frac{n}{2}\right)}  {\Gamma(m)}.
\end{equation*}
Thus
\begin{equation*}
\text{LHS} = \frac{1}{2} |\mathbb{S}^{n-1}| (f(x))^{\frac{n}{2}-m }\frac{\Gamma\!\left(m-\frac{n}{2}\right)\Gamma\!\left(\frac{n}{2}\right)}{\Gamma(m)}.
\end{equation*}
\end{proof}

The following lemma provides an explicit formula for the extension $U_0(x, t)$ via a hypergeometric function.

\begin{lemma}\label{appendix U}
    Let $u_0(x)=M_0|x|^{-\frac{n-2\sigma}{2}}$ and $U_0(x,t) = (\mathcal{P}_\sigma * u_0)(x,t) $, where $\mathcal{P}_\sigma$ is defined as in Lemma \ref{lemma 3.5}. Then
    \begin{equation*}
U_0(x,t) = \frac{\Gamma^2(\frac{n+2\sigma}{4})}{\Gamma(\frac{n}{2})\Gamma(\sigma)}
  M_0 (|x|^2+t^2)^{-\frac{n-2\sigma}{4}}
  {}_2F_1\!\left(\frac{n-2\sigma}{4},\frac{n-2\sigma}{4};\frac{n}{2};\frac{|x|^2}{|x|^2+t^2}\right).
\end{equation*}
\end{lemma}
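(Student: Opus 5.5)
We compute the Poisson integral directly, using Proposition \ref{appendix 1} to combine the two power factors and Proposition \ref{appendix 2} to integrate out $y$. By definition,
\[
U_0(x,t)=\beta(n,\sigma)M_0\,t^{2\sigma}\int_{\mathbb{R}^n}\frac{dy}{(|x-y|^2+t^2)^{\frac{n+2\sigma}{2}}\,|y|^{\frac{n-2\sigma}{2}}}.
\]

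\textbf{Step 1 (Feynman parametrization).} Apply Proposition \ref{appendix 1} with
\[
A=|x-y|^2+t^2,\quad B=|y|^2,\quad \mu=\tfrac{n+2\sigma}{2},\quad \nu=\tfrac{n-2\sigma}{4}.
\]
This writes the integrand as a $s$-integral of $[sA+(1-s)B]^{-(\mu+\nu)}$, where $\mu+\nu=\frac{3n+2\sigma}{4}$. Completing the square gives
\[
sA+(1-s)B=|y-sx|^2+s(1-s)|x|^2+st^2 .
\]

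\textbf{Step 2 (integrate out $y$).} Fubini is legitimate since everything is positive. Translating $y\mapsto y+sx$ and applying Proposition \ref{appendix 2} with $m=\mu+\nu>n/2$ and $f=s(1-s)|x|^2+st^2=s\,(t^2+(1-s)|x|^2)$ leaves a single integral:
\[
U_0(x,t)=c\,M_0\,t^{2\sigma}\int_0^1 s^{\mu-1}(1-s)^{\nu-1}\, s^{\frac n2-\mu-\nu}\,\bigl(t^2+(1-s)|x|^2\bigr)^{\frac n2-\mu-\nu}\,ds .
\]
Here $\frac n2-\mu-\nu=-\frac{n+2\sigma}{4}$, so the $s$-exponent becomes $s^{\frac{n-2\sigma}{4}-1}$. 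The constant $c$ collects $\beta(n,\sigma)$, $\Gamma(\mu+\nu)/(\Gamma(\mu)\Gamma(\nu))$ and the Gamma factors of Proposition \ref{appendix 2}.

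\textbf{Step 3 (identify the hypergeometric function).} Set $r^2=|x|^2+t^2$ and $z=|x|^2/r^2$, so that $t^2+(1-s)|x|^2=r^2(1-zs)$. The integral then has Euler form \eqref{eq:hyper_integral} with $b=\frac{n-2\sigma}{4}$, $c-b-1=\nu-1$ (hence $c=\frac{n-2\sigma}{2}$) and $a=\frac{n+2\sigma}{4}$. This yields
\[
U_0=c'\,M_0\,t^{2\sigma}r^{-\frac{n+2\sigma}{2}}\,{}_2F_1\!\left(\tfrac{n+2\sigma}{4},\tfrac{n-2\sigma}{4};\tfrac{n-2\sigma}{2};z\right).
\]
This is not yet the claimed form, so we rewrite it using the prefactor $t^{2\sigma}=r^{2\sigma}(1-z)^{\sigma}$. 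Euler's transformation \eqref{eq:hyper_Gauss trans} gives $c-a-b=-\sigma$, which exactly cancels $(1-z)^{\sigma}$ and produces ${}_2F_1(\frac{n-6\sigma}{4},\frac{n+2\sigma}{4};\frac{n-2\sigma}{2};z)$. That function still does not match ${}_2F_1(\frac{n-2\sigma}{4},\frac{n-2\sigma}{4};\frac n2;z)$, because the parameter $c$ differs.

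\textbf{Main obstacle and remedy.} The parametrization in Step 1 is the wrong one for matching parameters. We will instead apply Proposition \ref{appendix 1} with the weights attached differently: put the parameter on $|y|^2$ from the radial structure, or better, first average over the sphere $|y|=\rho$. The cleanest route is to use the known fact that $\mathcal{P}_\sigma*|y|^{-\gamma}$ solves $\Delta_b U=0$ and is homogeneous of degree $-\gamma$ with $\gamma=\frac{n-2\sigma}{2}$. Writing $U_0=r^{-\gamma}G(z)$ and imposing $\operatorname{div}(t^b\nabla U)=0$ gives a hypergeometric ODE in $z$ with parameters $(\frac{\gamma}{2},\frac{\gamma}{2};\frac n2)$. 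Regularity at $z=0$ (the axis $x=0$, where $U_0$ is smooth) then selects ${}_2F_1(\frac{n-2\sigma}{4},\frac{n-2\sigma}{4};\frac n2;z)$ up to a constant.

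The constant is fixed by evaluating at $x=0$, where $z=0$ and ${}_2F_1=1$:
\[
U_0(0,t)=\beta M_0 t^{2\sigma}\int\frac{dy}{(|y|^2+t^2)^{\frac{n+2\sigma}{2}}|y|^{\gamma}}.
\]
This is a Beta integral, evaluated by the same substitution as in Proposition \ref{appendix 2}, together with $\beta(n,\sigma)=\Gamma(\frac{n+2\sigma}{2})/(\pi^{n/2}\Gamma(\sigma))$. The expected outcome is
\[
\frac{\Gamma(\frac{n+2\sigma}{4})^2}{\Gamma(\frac n2)\Gamma(\sigma)}\,t^{-\gamma}.
\]
Equivalently, the Step 3 expression can be reconciled with this through the quadratic and Pfaff transformations \eqref{eq:hyper_transformation}. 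The ODE route is what we will carry out, deriving the radial ODE for $G$ carefully. The consistency check $z\to1$ via \eqref{eq:hyper_one} should recover $u_0$ on $t=0$.
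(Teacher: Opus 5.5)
The route you finally commit to is correct, but you abandoned Steps 1--3 for the wrong reason. Your parametrization ($A=|x-y|^2+t^2$, $B=|y|^2$, $\mu=\frac{n+2\sigma}{2}$, $\nu=\frac{n-2\sigma}{4}$) is exactly the paper's, and it works. The mismatch comes from an arithmetic slip in Step 2: the power of $s$ is $s^{\mu-1}\,s^{\frac n2-\mu-\nu}=s^{\frac n2-\nu-1}=s^{\frac{n+2\sigma}{4}-1}$, not $s^{\frac{n-2\sigma}{4}-1}$. With the correct exponent, the Euler integral \eqref{eq:hyper_integral} has $a=b=\frac{n+2\sigma}{4}$ and $c=b+\nu=\frac n2$, and $c>b>0$ holds since $n>2\sigma$. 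Collecting the Gamma factors with $\beta(n,\sigma)=\Gamma(\frac{n+2\sigma}{2})/(\pi^{n/2}\Gamma(\sigma))$ gives
\[
U_0(x,t)=\frac{\Gamma^2(\frac{n+2\sigma}{4})}{\Gamma(\frac n2)\Gamma(\sigma)}\,M_0\,t^{2\sigma}r^{-\frac{n+2\sigma}{2}}\,{}_2F_1\Big(\frac{n+2\sigma}{4},\frac{n+2\sigma}{4};\frac n2;z\Big).
\]
Your own device then finishes at once: in \eqref{eq:hyper_Gauss trans} one has $c-a-b=-\sigma$ and $c-a=c-b=\frac{n-2\sigma}{4}$, and $t^{2\sigma}(1-z)^{-\sigma}=r^{2\sigma}$. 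This is the paper's proof, slightly streamlined. Your identity $t^2+(1-s)|x|^2=r^2(1-zs)$ lands directly in the variable $z$. The paper instead substitutes $u=1-s$, gets the argument $-|x|^2/t^2$, and needs the Pfaff transformation \eqref{eq:hyper_transformation} before Euler's. You should delete the sentence claiming your erroneous Step 3 expression can be reconciled with the target by quadratic and Pfaff transformations. That expression is simply not equal to $U_0$: normalize at $z=0$ and compare the coefficients of $z$.

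The ODE route itself is a genuinely different and valid argument. On functions radial in $x$, with $|x|=r\sin\phi$ and $t=r\cos\phi$, one has $\Delta_b=\partial_r^2+\frac{n+1-2\sigma}{r}\partial_r+\frac{1}{r^2}\bigl(\partial_\phi^2+((n-1)\cot\phi-(1-2\sigma)\tan\phi)\partial_\phi\bigr)$. Using $z=\sin^2\phi$ and $\gamma(\gamma-n+2\sigma)=-\gamma^2$, the equation $\Delta_b(r^{-\gamma}G(z))=0$ becomes
\[
z(1-z)G''+\Big(\frac n2-\big(\frac n2+1-\sigma\big)z\Big)G'-\frac{\gamma^2}{4}G=0.
\]
This is the hypergeometric equation with parameters $(\frac\gamma2,\frac\gamma2;\frac n2)$, the same ODE the paper uses later in Appendix \ref{CC}. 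Boundedness at $z=0$ excludes the second solution, which behaves like $z^{1-n/2}$, or like $\log z$ when $n=2$. Your Beta-integral normalization at $x=0$ does give $\frac{\Gamma^2(\frac{n+2\sigma}{4})}{\Gamma(\frac n2)\Gamma(\sigma)}M_0t^{-\gamma}$. To be complete, you must justify three facts about $U_0$:
\begin{itemize}
\item it is $b$-harmonic, by differentiating under the integral, which is legitimate since $u_0\in\mathcal L_\sigma(\mathbb R^n)$;
\item it is homogeneous of degree $-\gamma$;
\item it is radial in $x$.
\end{itemize}
Your route explains why these parameters appear and needs only a one-dimensional Beta integral for the constant. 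The price is these PDE facts plus a Frobenius-type uniqueness argument. The paper's computation is purely algebraic given Propositions \ref{appendix 1} and \ref{appendix 2}, and it produces the constant along the way.
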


\begin{proof}

We have
\begin{equation*}
U_0(x,t) = P_\sigma * u_0(x,t)
= \beta(n, \sigma) \int_{\mathbb{R}^n}
\frac{t^{2\sigma}}{(|y|^2 + t^2)^{\frac{n+2\sigma}{2}}}
\frac{M_0}{|x-y|^{\frac{n-2\sigma}{2}}} \, dy.
\end{equation*}
By Proposition \ref{appendix 1},
\begin{equation}\label{eq 2.6}
\frac{1}{A^\mu B^\nu}
= \frac{\Gamma(\mu+\nu)}{\Gamma(\mu)\Gamma(\nu)}
\int_0^1 s^{\mu-1} (1-s)^{\nu-1} (sA + (1-s)B)^{-(\mu+\nu)} \, ds.
\end{equation}
Let $A = |x-y|^2 + t^2$, $B = |y|^2$, $\mu = \frac{n+2\sigma}{2}$, $\nu = \frac{n-2\sigma}{4}$. Then
\begin{equation*}
sA + (1-s)B = |y-sx|^2 + s(1-s)|x|^2 + s t^2.
\end{equation*}
Plugging \eqref{eq 2.6} and integrating over $y$, we obtain
\begin{equation*}
\setlength{\jot}{10pt}
\begin{aligned}
&\int_{\mathbb{R}^n}
   \frac{t^{2\sigma}}{(|x-y|^2 + t^2)^{\frac{n+2\sigma}{2}}}
   \frac{1}{|y|^{\frac{n-2\sigma}{4}}} \, dy\\
&= \frac{\Gamma(\frac{3}{4}n + \frac{\sigma}{2}) t^{2\sigma}}
        {\Gamma(\frac{n+2\sigma}{2}) \Gamma(\frac{n-2\sigma}{4})}
   \!\int_0^1 s^{\frac{n+2\sigma}{2}-1} (1-s)^{\frac{n-2\sigma}{4}-1}\\
  &\qquad\times \left[\int_{\mathbb{R}^n} (|y-sx|^2 + s(1-s)|x|^2 + st^2)^{-\frac{3}{4}n-\frac{\sigma}{2}} dy \right] ds
\\
&\stackrel{(\ref{appendix 2})}{=}
\frac{1}{2} |\mathbb{S}^{n-1}|\frac{\Gamma(\frac{n}{2}) \Gamma(\frac{n+2\sigma}{4})}
     {\Gamma(\frac{n+2\sigma}{2}) \Gamma(\frac{n-2\sigma}{4})}
\, t^{2\sigma }
\int_0^1 s^{\frac{n+2\sigma}{4}-1} (1-s)^{\frac{n-2\sigma}{4}-1}
((1-s)|x|^2 + t^2)^{-\frac{n+2\sigma}{4}} ds
\\
&\stackrel{u = 1-s}{=}
\frac{1}{2} |\mathbb{S}^{n-1}|
\frac{\Gamma(\frac{n}{2}) \Gamma(\frac{n+2\sigma}{4})}
     {\Gamma(\frac{n+2\sigma}{2}) \Gamma(\frac{n-2\sigma}{4})}
t^{\sigma - \frac{n}{2}}
\int_0^1 u^{\frac{n-2\sigma}{4}-1} (1-u)^{\frac{n}{4}+\frac{\sigma}{2}-1}
\left( 1 + u \frac{|x|^2}{t^2} \right)^{-\frac{n+2\sigma}{4}} du
\\
&\stackrel{\eqref{eq:hyper_integral}}{=}
\frac{1}{2} |\mathbb{S}^{n-1}| t^{\sigma - \frac{n}{2}}
\frac{ \Gamma^2(\frac{n+2\sigma}{4})}
     {\Gamma(\frac{n+2\sigma}{2}) }
{}_2F_1\!\!\left(\frac{n+2\sigma}{4}, \frac{n-2\sigma}{4}; \frac{n}{2}; -\frac{|x|^2}{t^2}\right).
\end{aligned}
\end{equation*}
Since
\begin{equation*}
\beta(n, \sigma) \int_{\mathbb{R}^n} (|y|^2+1)^{-\frac{n+2\sigma}{2}} dy = 1,
\end{equation*}
Proposition \ref{appendix 2} gives
\begin{equation*}
\beta(n, \sigma)= \frac{2}{|\mathbb{S}^{n-1}|} \frac{\Gamma(\frac{n}{2}+\sigma)}{\Gamma(\frac{n}{2}) \Gamma(\sigma)}.
\end{equation*}
Therefore,
\begin{equation*}
U_0(x,t) = \frac{\Gamma^2(\frac{n+2\sigma}{4})}{\Gamma(\frac{n}{2}) \Gamma(\sigma)}
  M_0 \, t^{-\frac{n-2\sigma}{2}}
  {}_2F_1\!\left(\frac{n+2\sigma}{4}, \frac{n-2\sigma}{4}; \frac{n}{2}; -\frac{|x|^2}{t^2}\right).
\end{equation*}
By \eqref{eq:hyper_transformation},
\begin{equation*}
U_0(x,t) = \frac{\Gamma^2(\frac{n+2\sigma}{4})}{\Gamma(\frac{n}{2})\Gamma(\sigma)}
  M_0 t^{2\sigma} (|x|^2+t^2)^{-\frac{n+2\sigma}{4}}
  {}_2F_1\!\left(\frac{n+2\sigma}{4},\frac{n+2\sigma}{4};\frac{n}{2};\frac{|x|^2}{|x|^2+t^2}\right).
\end{equation*}
Using \eqref{eq:hyper_Gauss trans}, we finally get
\begin{equation*}
U_0(x,t) = \frac{\Gamma^2(\frac{n+2\sigma}{4})}{\Gamma(\frac{n}{2})\Gamma(\sigma)}
  M_0 (|x|^2+t^2)^{-\frac{n-2\sigma}{4}}
  {}_2F_1\!\left(\frac{n-2\sigma}{4},\frac{n-2\sigma}{4};\frac{n}{2};\frac{|x|^2}{|x|^2+t^2}\right).
\end{equation*}
\end{proof}

The following lemma yields a reduced expression for the Pohozaev identity for $u_0(x)=M_0|x|^{-\frac{n-2\sigma}{2}}$, as several boundary terms are shown to vanish.

\begin{lemma}\label{reduction}
Denote
\begin{equation*}
        \begin{aligned}
          P(\sigma,R,u) =  & N_{n,\sigma} \frac{n-2\sigma}{2n} R \int_{\partial B_R} K u^{\frac{2n}{n-2\sigma}}  +  \left[ \frac{n-2\sigma}{2} \int_{\partial'' \mathcal{B}_R} t^b U \partial_\nu  U  \right.\\
&+ \left.\int_{\partial'' \mathcal{B}_R} t^b \partial_\nu  U \langle X, \nabla U \rangle
+ \int_{\partial'' \mathcal{B}_R} Q_0(U)\right] \qquad \text{for any }R>0.
        \end{aligned}
    \end{equation*}
      Let $u_0(x)=M_0|x|^{-\frac{n-2\sigma}{2}}$ and $U_0(x,t) = (\mathcal{P}_\sigma * u_0)(x,t) $, where $\mathcal{P}_\sigma$ is defined as in Lemma \ref{lemma 3.5}. Then
      \begin{equation*}
\begin{aligned}
P(\sigma,R,u_0)
&=  N_{n,\sigma} \frac{n-2\sigma}{2n} R \int_{\partial B_R} K u_0^{\frac{2n}{n-2\sigma}}
+ \int_{\partial'' \mathcal{B}_R} Q_0(U_0).
\end{aligned}
\end{equation*}
\end{lemma}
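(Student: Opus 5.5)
The plan is to exploit the scaling of $U_0$: it is homogeneous in $X=(x,t)$, so on the spherical cap $\partial''\mathcal{B}_R$ the normal derivative is a multiple of $U_0$. I expect the two middle boundary terms in $P(\sigma,R,u_0)$ to cancel exactly, leaving only the $K$-term and the $Q_0$-term.

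\emph{Homogeneity.} First I would check that $U_0(\lambda X)=\lambda^{-\frac{n-2\sigma}{2}}U_0(X)$ for every $\lambda>0$ and $X\in\mathbb{R}^{n+1}_+$. This follows from the change of variables $y=\lambda y'$ in $U_0=\mathcal{P}_\sigma * u_0$. The kernel satisfies $\mathcal{P}_\sigma(\lambda x,\lambda t)=\lambda^{-n}\mathcal{P}_\sigma(x,t)$, the measure gives $\lambda^n$, and $u_0(\lambda y)=\lambda^{-\frac{n-2\sigma}{2}}u_0(y)$. Alternatively, the homogeneity can be read off the explicit formula in Lemma \ref{appendix U}: the factor $(|x|^2+t^2)^{-\frac{n-2\sigma}{4}}$ carries the scaling and $F(z)$ depends only on the scale-invariant ratio $z=|x|^2/(|x|^2+t^2)$. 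Since $U_0$ is smooth in the open half space ($\Delta_b U_0=0$ there), Euler's relation gives
\[
\langle X,\nabla U_0\rangle=-\tfrac{n-2\sigma}{2}\,U_0 \quad\text{in } \mathbb{R}^{n+1}_+ .
\]

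\emph{Cancellation on $\partial''\mathcal{B}_R$.} There the outward normal is $\nu=X/R$, so $\partial_\nu U_0=R^{-1}\langle X,\nabla U_0\rangle=-\frac{n-2\sigma}{2R}U_0$. Substituting this into the two middle terms gives
\[
\frac{n-2\sigma}{2}\int_{\partial''\mathcal{B}_R}t^bU_0\,\partial_\nu U_0=-\frac{(n-2\sigma)^2}{4R}\int_{\partial''\mathcal{B}_R}t^bU_0^2,
\]
\[
\int_{\partial''\mathcal{B}_R}t^b\,\partial_\nu U_0\,\langle X,\nabla U_0\rangle=+\frac{(n-2\sigma)^2}{4R}\int_{\partial''\mathcal{B}_R}t^bU_0^2 .
\]
These sum to zero, which is exactly the claimed reduced form of $P(\sigma,R,u_0)$.

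\emph{Main obstacle: finiteness of the integrals.} The cancellation is legitimate only if $\int_{\partial''\mathcal{B}_R}t^bU_0^2<\infty$, and this is the one step needing care. Near the equator $t\to0$ with $|x|=R$, Lemma \ref{appendix U} together with \eqref{eq:hyper_one} shows $U_0$ stays bounded, since $c-a-b=\sigma>0$ makes $F(1)$ finite. Away from the equator, including at the pole $(0,R)$ where $z=0$ and $F(0)=1$, $U_0$ is smooth. Since $b=1-2\sigma>-1$, the weight $t^b$ is integrable on the cap, so the integral is finite. If one also wants the $Q_0(U_0)$ term to be finite, the same reasoning would need $t^b|\nabla U_0|^2$ integrable near the equator. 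Here I would use \eqref{eq:hyper_derivative} and the transformation \eqref{eq:hyper_Gauss trans} to show $|\nabla U_0|\lesssim t^{-b}$ there, which should follow from the extension property $t^b\partial_tU_0\to$ finite. Only the finiteness of the terms that cancel is needed for the lemma itself.
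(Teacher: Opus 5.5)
Your proposal is correct and follows the paper's argument. The paper reads off from $U_0=C_0M_0r^{-\frac{n-2\sigma}{2}}F(z)$, with $z$ depending only on the direction of $X$, that $\partial_\nu U_0=-\frac{n-2\sigma}{2R}U_0$ and $\langle X,\nabla U_0\rangle=-\frac{n-2\sigma}{2}U_0$ on $\partial''\mathcal{B}_R$; you get the same degree $-\frac{n-2\sigma}{2}$ homogeneity by scaling the Poisson integral, and the two middle terms cancel in the same way.

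Your finiteness check is extra care the paper omits. The only slip is in the non-essential aside on $Q_0$: for $\sigma>\frac12$ the bound should be $|\nabla U_0|\lesssim 1+t^{-b}$ rather than $t^{-b}$, since $\nabla_xU_0\to\nabla u_0\neq0$ at the equator. This still gives $t^b|\nabla U_0|^2\lesssim t^{b}+t^{-b}$, which is integrable because $|b|<1$.
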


\begin{proof}
By Lemma~\ref{appendix U},
\begin{equation*}
U_0(x,t) = \frac{\Gamma^2\!\left(\frac{n+2\sigma}{4}\right)}{\Gamma\!\left(\frac{n}{2}\right)\Gamma(\sigma)} M_0 (|x|^2+t^2)^{-\frac{n-2\sigma}{4}} {}_2F_1\!\left(\frac{n-2\sigma}{4},\frac{n-2\sigma}{4};\frac{n}{2};\frac{|x|^2}{|x|^2+t^2}\right).
\end{equation*}
Let
\begin{equation*}
r^2 = |x|^2 + t^2, \quad z = \frac{|x|^2}{r^2}, \quad F(z) = {}_2F_1\!\left(\frac{n-2\sigma}{4}, \frac{n-2\sigma}{4}; \frac{n}{2}; z\right)
\end{equation*}
and
\begin{equation*}
    C_0 = \frac{\Gamma^2\!\left(\frac{n+2\sigma}{4}\right)}{\Gamma\!\left(\frac{n}{2}\right)\Gamma(\sigma)}.
\end{equation*}
Then
\begin{equation*}
U_0(x,t) = C_0 M_0 r^{-\frac{n-2\sigma}{2}} F(z).
\end{equation*}
For the derivatives,
\begin{equation*}
\partial_{x_i} U_0(x,t) = C_0 M_0 r^{-\frac{n-2\sigma}{2}-2} x_i \left(-\frac{n-2\sigma}{2} F(z) + 2 (1-z) F'(z)\right),
\end{equation*}
\begin{equation}\label{eq partial_t U}
\partial_t U_0(x,t) = C_0 M_0 r^{-\frac{n-2\sigma}{2}-2} t \left(-\frac{n-2\sigma}{2} F(z) - 2 z F'(z)\right).
\end{equation}
Thus
\begin{equation*}
\begin{aligned}
|\nabla_{x,t} U_0|^2 &= |\nabla_x U_0|^2 + |\partial_t U_0|^2 \\
&= C_0^2 M_0^2 r^{-(n-2\sigma)-2}\left[\left(\frac{n-2\sigma}{2}\right)^2 F^2(z) + 4 z(1-z) (F'(z))^2\right].
\end{aligned}
\end{equation*}
For $(x,t) \in \partial'' \mathcal{B}_R$,
\begin{equation*}
\partial_\nu U_0 = \frac{d}{dr}\big(C_0 M_0 r^{-\frac{n-2\sigma}{2}} F(z)\big)\Big|_{r=R} = -\frac{n-2\sigma}{2} \frac{1}{R} U_0.
\end{equation*}
Hence
\begin{equation*}
\langle X, \nabla U_0 \rangle = R\, \partial_\nu U_0 = -\frac{n-2\sigma}{2} U_0, \qquad X = (x,t) \in \partial'' \mathcal{B}_R.
\end{equation*}
It follows that
\begin{equation*}
\frac{n-2\sigma}{2} \int_{\partial'' \mathcal{B}_R} t^b  U_0\partial_\nu  U_0 + \int_{\partial'' \mathcal{B}_R} t^b \partial_\nu  U_0 \langle X, \nabla U_0 \rangle = 0.
\end{equation*}
Therefore, $P(\sigma,R,u_0)$ reduces to
\begin{equation*}
\begin{aligned}
P(\sigma,R,u_0)
&=  N_{n,\sigma} \frac{n-2\sigma}{2n} R \int_{\partial B_R} K u_0^{\frac{2n}{n-2\sigma}}
+ \int_{\partial'' \mathcal{B}_R} Q_0(U_0).
\end{aligned}
\end{equation*}
\end{proof}

We compute the constants $N_{n,\sigma}$, $M_0$ and the integral $\int_{\partial'' \mathcal{B}_R }Q_0(U_0)$ involved in Lemma~\ref{lemma 3.5}. By \eqref{q 1} and Lemma \ref{appendix U}, we have
\begin{equation*}
\begin{aligned}
\int_{\partial'' \mathcal{B}_R} Q_0(U_0)
&= -\frac{1}{2} \int_{\partial'' \mathcal{B}_R} t^{1-2\sigma} |\nabla U_0|^2 X^k \nu_k \, d\sigma \\
&= \frac{-C_0^2 M_0^2}{2}  \int_{\{|x|^2+t^2=R^2,\, t>0\}}
   R t^{1-2\sigma}R^{-(n-2\sigma)-2} \\
   &\qquad\times  \Bigg[
   \Big(\frac{n-2\sigma}{2}\Big)^2  F^2(z)
   + 4  z(1-z)(F'(z))^2
   \Bigg] \, dS \\
&= \frac{-C_0^2 M_0^2}{2 R^{n}}\int_{B_R}
   \Big(1 - \frac{|x|^2}{R^2}\Big)^{-\sigma}\\
   &\qquad\times  \Bigg[ \Big(\frac{n-2\sigma}{2}\Big)^2  F^2\!\Big(\frac{|x|^2}{R^2}\Big)
   + 4 \frac{|x|^2 (R^2-|x|^2)}{R^4} \big(F'(\frac{|x|^2}{R^2})\big)^2 \Bigg] dx \\
&= \frac{-C_0^2 M_0^2}{2 R^{n}} |\mathbb{S}^{n-1}| \int_0^R
   r^{n-1} \Big(1-\frac{r^2}{R^2}\Big)^{-\sigma} \\
   &\qquad\times
   \Bigg[ \Big(\frac{n-2\sigma}{2}\Big)^2 F^2\!\Big(\frac{r^2}{R^2}\Big)
   + 4 \frac{r^2(R^2-r^2)}{R^4} \big(F'(\frac{r^2}{R^2})\big)^2 \Bigg] dr.
\end{aligned}
\end{equation*}
With the substitution $s = \frac{r^2}{R^2}$, this becomes
\begin{equation}\label{eq A.1}
\int_{\partial'' \mathcal{B}_R} Q_0(U_0)
= -\frac{C_0^2 M_0^2}{4} |\mathbb{S}^{n-1}| \int_0^1
   s^{\frac{n-2}{2}} (1-s)^{-\sigma}
   \Bigg[ \Big(\frac{n-2\sigma}{2}\Big)^2 F^2(s)
   + 4s(1-s)\big(F'(s)\big)^2 \Bigg] ds.
\end{equation}
By \eqref{eq:hyper_diffeq},
\begin{equation*}
s(1-s)F''(s) + \Big( \frac{n}{2} - \Big(\frac{n-2\sigma}{2}+1\Big)s \Big) F'(s)
- \Big( \frac{n-2\sigma}{4} \Big)^2 F(s) = 0,
\end{equation*}
which is equivalent to
\begin{equation*}
\Big( s^{\frac{n}{2}} (1-s)^{1-\sigma} F'(s) \Big)'
= \Big( \frac{n-2\sigma}{4} \Big)^2 s^{\frac{n}{2}-1}(1-s)^{-\sigma} F(s).
\end{equation*}
Multiplying by $F$, using Leibniz's rule, and integrating over $(0,1)$, we obtain
\begin{equation}\label{eq A.2}
\Big[ s^{\frac{n}{2}} (1-s)^{1-\sigma} F(s)F'(s) \Big]_{s=0}^1
= \int_0^1 s^{\frac{n}{2}-1}(1-s)^{-\sigma}
\Big( s(1-s)(F'(s))^2 + \Big(\frac{n-2\sigma}{4}\Big)^2 F(s)^2 \Big) \, ds.
\end{equation}
By \eqref{eq:hyper_zero}, $F(0)=1$ and $F'(0) = \big(\frac{n-2\sigma}{4}\big)^2 \frac{2}{n}$,
\begin{equation}\label{eq A.3}
s^{\frac{n}{2}} (1-s)^{1-\sigma} F(s)F'(s) \to 0, \quad s\to 0.
\end{equation}
Moreover, by \eqref{eq:hyper_one}, since $\frac{n-2\sigma}{4} + \frac{n-2\sigma}{4} < \frac{n}{2}$,
\begin{equation*}
F(1) = {}_2F_1\!\left( \frac{n-2\sigma}{4}, \frac{n-2\sigma}{4}; \frac{n}{2}; 1 \right)
= \frac{\Gamma(\frac{n}{2})\Gamma(\sigma)}{\Gamma^2(\frac{n+2\sigma}{4})}.
\end{equation*}
Also, by \eqref{eq:hyper_derivative} and \eqref{eq:hyper_Gauss trans},
\begin{equation*}
\begin{aligned}
F'(s) &= \frac{2}{n} \Big(\frac{n-2\sigma}{4}\Big)^2
 {}_2F_1\!\left( \frac{n-2\sigma}{4}+1, \frac{n-2\sigma}{4}+1; \frac{n}{2}+1; s \right)\\
 &=\frac{2}{n} \Big(\frac{n-2\sigma}{4}\Big)^2 (1-s)^{\sigma-1}
 {}_2F_1\!\left( \frac{n+2\sigma}{4}, \frac{n+2\sigma}{4}; \frac{n}{2}+1; s \right).
 \end{aligned}
\end{equation*}
Thus
\begin{equation}\label{eq A.4}
s^{\frac{n}{2}} (1-s)^{1-\sigma} F(s)F'(s)
\to \frac{\Gamma^2(\frac{n}{2}) \Gamma(\sigma)\Gamma(1-\sigma)}{\Gamma^2(\frac{n-2\sigma}{4})\Gamma^2(\frac{n+2\sigma}{4})}, \quad s \to 1.
\end{equation}
Combining \eqref{eq A.1}-\eqref{eq A.4}, we finally get
\begin{equation}\label{eq Q0}
\int_{\partial'' \mathcal{B}_R} Q_0(U_0)
= - C_0^2 M_0^2 |\mathbb{S}^{n-1}|
   \frac{\Gamma^2(\frac{n}{2}) \Gamma(\sigma)\Gamma(1-\sigma)}
   {\Gamma^2(\frac{n+2\sigma}{4}) \Gamma^2(\frac{n-2\sigma}{4})}.
\end{equation}

We now compute the constant $N_{n,\sigma}$. By \eqref{eq partial_t U} we have
\begin{equation}\label{eq e.1}
- \lim_{t \to 0} t^{1-2\sigma} \partial_t U_0(x,t)
= \lim_{t \to 0} t^{2-2\sigma} C_0 M_0 r^{-\frac{n-2\sigma}{2}-2}  \Big(
\frac{n-2\sigma}{2} F(z) + 2 zF'(z) \Big).
\end{equation}
By \eqref{eq:hyper_one},
\begin{equation*}
\lim_{t \to 0} F\!\left(\frac{|x|^2}{r^2}\right)
= \frac{\Gamma(\frac{n}{2}) \Gamma(\sigma)}{\Gamma^2(\frac{n+2\sigma}{4})}.
\end{equation*}
Moreover,
\begin{equation*}
\setlength{\jot}{10pt}
\begin{aligned}
F'(z) &= \big(\frac{n-2\sigma}{4}\big)^2 \frac{2}{n}
\, {}_2F_1\!\left(\frac{n-2\sigma}{4}+1, \frac{n-2\sigma}{4}+1; \frac{n}{2}+1; z \right) \\
&= \big(\frac{n-2\sigma}{4}\big)^2 \frac{2}{n} (1-z)^{\sigma-1}
\, {}_2F_1\!\left(\frac{n+2\sigma}{4}, \frac{n+2\sigma}{4}; \frac{n}{2}+1; z \right) \\
&= \big(\frac{n-2\sigma}{4}\big)^2 \frac{2}{n} \big(\frac{t}{r}\big)^{2\sigma-2}
\, {}_2F_1\!\left(\frac{n+2\sigma}{4}, \frac{n+2\sigma}{4}; \frac{n}{2}+1; z \right).
\end{aligned}
\end{equation*}
Using \eqref{eq:hyper_one} again,
\begin{equation}\label{eq e.2}
2C_0 M_0 \lim_{t \to 0} t^{2-2\sigma} r^{-\frac{n-2\sigma}{2}-2} zF'(z)
= 2C_0 M_0 r^{-\frac{n+2\sigma}{2}}
\frac{\Gamma(\frac{n}{2})\Gamma(1-\sigma)}{\Gamma^2(\frac{n-2\sigma}{4})}.
\end{equation}
Combining \eqref{extension 1}, \eqref{eq e.1} and \eqref{eq e.2}, we obtain
\begin{equation*}
N_{n,\sigma} (-\Delta)^\sigma M_0 |x|^{-\frac{n-2\sigma}{2}}
= 2C_0 M_0 \frac{\Gamma(\frac{n}{2})\Gamma(1-\sigma)}{\Gamma^2(\frac{n-2\sigma}{4})}
|x|^{-\frac{n+2\sigma}{2}}.
\end{equation*}
Let $C_2 = C_2(n,\sigma) > 0$ denote the constant satisfying the following equation
\begin{equation*}
(-\Delta)^\sigma |x|^{-\frac{n-2\sigma}{2}}
= C_2 |x|^{-\frac{n+2\sigma}{2}}.
\end{equation*}
Thus
\begin{equation*}
N_{n,\sigma} = 2 \frac{C_0}{C_2}
\frac{\Gamma(\frac{n}{2}) \Gamma(1-\sigma)}{\Gamma^2(\frac{n-2\sigma}{4})}.
\end{equation*}
Since $u_0(x) = M_0 |x|^{-\frac{n-2\sigma}{2}}$ solves
\begin{equation*}
(-\Delta)^\sigma u_0(x) = K(\infty)\, u_0(x)^{\frac{n+2\sigma}{n-2\sigma}},
\end{equation*}
we have
\begin{equation*}
M_0^{\frac{4\sigma}{n-2\sigma}} = \frac{C_2}{K(\infty)}.
\end{equation*}
Hence, for $R$ sufficiently large,
\begin{equation}\label{eq N}
\begin{aligned}
 N_{n,\sigma}\frac{n-2\sigma}{2n} R \int_{\partial B_R} K(\infty) u^{\frac{2n}{n-2\sigma}}
&= N_{n,\sigma}\frac{n-2\sigma}{2n} R \int_{\partial B_R} K(\infty) M_0^{\frac{2n}{n-2\sigma}} |x|^{-n} \\
&= N_{n,\sigma}\frac{n-2\sigma}{2n} |\mathbb{S}^{n-1}| M_0^{\frac{2n}{n-2\sigma}} K(\infty) \\
&= \frac{n-2\sigma}{n} |\mathbb{S}^{n-1}| M_0^{2} C_0
\frac{\Gamma(\frac{n}{2}) \Gamma(1-\sigma)}{\Gamma^2(\frac{n-2\sigma}{4})}.
\end{aligned}
\end{equation}


\bibliographystyle{amsplain}
\bibliography{refs}

\bigskip

\noindent M. Xu

\noindent  School of Mathematical Sciences, Fudan University\\
Shanghai 200433, China \\[1mm] Email:  \textsf{meiqing\_xu@fudan.edu.cn}.

\medskip

\noindent   H. Yang

\noindent  School of Mathematical Sciences, Shanghai Jiao Tong University\\
Shanghai 200240, China \\[1mm] Email:  \textsf{hui-yang@sjtu.edu.cn}

\end{document}